\documentclass[a4paper,10pt,reqno]{amsart}

\textwidth16cm \textheight21.1cm \oddsidemargin-0.05cm
\evensidemargin-0.05cm

\usepackage[utf8]{inputenc}
\usepackage[T1]{fontenc}
\usepackage{amsthm}
\usepackage{amsmath}
\usepackage{amssymb}
\usepackage[inline]{enumitem}
\usepackage{comment}
\usepackage{hyperref}
\usepackage{fancyhdr}
\usepackage{mathrsfs}
\usepackage{stmaryrd}
\usepackage[normalem]{ulem}
\usepackage{xcolor}
\usepackage{nicefrac}

\theoremstyle{definition}

\newtheorem{theorem}{Theorem}[section]
\newtheorem{lemma}[theorem]{Lemma}
\newtheorem{proposition}[theorem]{Proposition}
\newtheorem{corollary}[theorem]{Corollary}

\theoremstyle{definition}

\newtheorem{example}[theorem]{Example}
\newtheorem{remark}[theorem]{Remark}

\newcommand{\evid}[1]{\textsf{#1}}

\providecommand\llb{\llbracket}
\providecommand\rrb{\rrbracket}
\providecommand\spa{{\rm Span}}
\providecommand\I{{\mathcal{I}}}
\providecommand\Mon{{\mathcal{M}{\rm on}}}

\DeclareMathOperator{\mdeg}{\mathsf{m-deg}}
\DeclareMathOperator{\ini}{\mathsf{in}}
\DeclareMathOperator{\Min}{\mathrm{Min}}

\title{On the arithmetic of polynomial ideals}

\author{Nikola Bogdanovic}
\address{Department of Mathematics and Scientific Computing, University of Graz | Heinrichstrasse 36/III, 8010 Graz, Austria}
\email{nikola.bogdanovic@uni-graz.at}

\author{Laura Cossu}
\address{Department of Mathematics and Computer Science, University of Cagliari | Via Ospedale 72, 09124 Cagliari, Italy}
\email{laura.cossu3@unica.it}

\author{Azeem Khadam}
\address{Department of Mathematics and Scientific Computing, University of Graz | Heinrichstrasse 36/III, 8010 Graz, Austria}
\email{azeemkhadam@gmail.com}

\subjclass[2020]{13A15, 13B25, 13F05, 13F20, 20M12, 20M13}

\keywords{Monoids of ideals, polynomial ideal theory, monomial ideals, Krull domains, weakly Krull domains, atoms, sets of lengths, elasticity}

\thanks{This research was funded in part by the Austrian Science Fund (FWF) [10.55776/DOC183 and 10.55776/ P\-AT\-975\-6623]. L.~C.~also acknowledges the European Union's Horizon 2020 program (Marie Sklodowska-Curie grant 101021791) for the financial support on the early stages of the project. L.~C.~is a member of the National Group for Algebraic and Geometric Structures and their Applications (GNSAGA), a department of the Italian Mathematics Research Institute (INdAM)}

\begin{document}
\begin{abstract}
This paper investigates atomic factorizations in the monoid $\I(R)$ of nonzero ideals of a multivariate polynomial ring $R$, under ideal multiplication. Building on recent advances in factorization theory for unit-cancellative monoids, we extend techniques from \cite{Ge-Kh21} to construct new families of atoms in $\I(R)$, leading to a deeper understanding of its arithmetic. We further analyze the submonoid $\Mon(R)$ of monomial ideals, deriving arithmetic properties and computing sets of lengths for specific classes of ideals. The results advance the extensive study of ideal monoids within a classical algebraic framework.
\end{abstract}
\maketitle

\section{Introduction}

By the Fundamental Theorem of Arithmetic, every positive integer admits a unique factorization into prime numbers, up to units. This result, first formulated in Euclid’s {\it Elements}, was later found not to extend to more general settings. In particular, when considering the ring of integers of a number field (a number ring, for short), factorizations into prime elements may fail to be unique. Initially regarded as a pathological exception, starting with Narkiewicz’s work in the 1960s, this phenomenon became a subject of systematic study. The resulting field, now known as {\it factorization theory}, investigates the structure of non-unique factorizations. Its main object of study is the following notion, suitably generalizing that of factorization into primes in (multiplicative monoids of) number rings: an \emph{(atomic) factorization} of an element of a commutative cancellative monoid is, up to units, a finite, unordered sequence of \emph{atoms} (i.e., non-unit elements which are not the product of two non-units) whose product is equal to the element. We refer the reader to the monograph \cite{Ge-HK} for a comprehensive overview of the subject.

In recent years, while the classical theory of factorization has continued to thrive (see, for instance, \cite{C-C-G-S21, Co-Ha18, Co-Go19, Go18a,Go19a,Got-Li-2023, Ger-Got-2025}),
several attempts have been made to expand its scope, considering non-commutative (see \cite{Ba-Sm15, Bell17, Sm-Vo19a, Na-Sm23}) and non-cancellative monoids (see \cite{An-Tr21, Co-Tr24, Tr21(b)}), even multiplicative monoids of rings with zero divisors, as in \cite{Ba-Po25}. In the latter direction, the definition of \emph{unit-cancellativity} was introduced in \cite{Fan17} as a weakening of the usual cancellativity hypothesis, and a substantial portion of the classical theory has since been developed in this more general setting; we refer \cite{Ge-Zh20} for a gentle introduction to the topic.

We aim to study atomic factorizations in the (commutative, unit-cancellative) monoid of nonzero ideals of the polynomial ring $K[X_1,\dots,X_N]$, where $K$ is an arbitrary field, under ideal multiplication. We assume throughout the paper that $N\ge 2$, as $K[X]$ is a PID and the factorizations of its ideals correspond to the factorizations of their generating polynomials.


Our work contributes to the extensive literature on monoids of ideals: we refer to \cite{Hal-Ko98} and the monograph \cite{Fo-Ho-Lu13} for an overview, and to \cite{Rein12,An-Ju19,Kl-Om20,Ol-Re20} for more recent developments. A common approach in the literature is to focus on specific classes of ideals within a given ring, or semigroup ideals within a given monoid. Among these, divisorial ideals are perhaps the most prominent example. By suitably choosing a ring (or monoid) and a class of its ideals, one can obtain a fairly well-behaved monoid of ideals, and in some cases one is able to completely determine its structure. As an example, we recall two well-known results: a domain $R$ is Dedekind if and only if all its nonzero ideals are invertible, and this happens only if the monoid of all nonzero ideals is factorial, and a domain $R$ is Krull if and only if the monoid of its invertible divisorial ideals is factorial.

Compared to the \lq\lq classical\rq\rq\ examples of monoids of ideals, the structure of atomic factorizations in the monoid $\I(R)$ consisting of all nonzero ideals of a ring $R$, remains poorly understood in general. This is true even in seemingly simple cases, such as when $R$ is a polynomial ring over a field, where surprisingly little is currently known.

In the paper \cite{Ge-Kh21}, a first step was taken in studying the arithmetic properties of $\I(R)$, with $R$ a multivariate polynomial ring. In particular, the results collected in \cite[Theorem 5.1]{Ge-Kh21} reveal a noticeable similarity between the arithmetic of $\I(R)$ and that of Krull monoids with infinite class group and prime divisors in each class, despite $\I(R)$ not even being transfer Krull. More specifically, $\I(R)$ is fully elastic and each of its unions of sets of lengths is the whole set $\mathbb N_{\ge 2}$. 

Building on these initial results, we refine the demonstrative techniques and adapt them to broader contexts. Following a review of the necessary preliminaries in Section \ref{sec: preliminaries}, we extend the methods introduced in \cite{Ge-Kh21} to find several new families of atoms in $\I(R)$. In particular, in Section \ref{subsec: sum-free} we highlight an interesting class built from sum-free subsets of $\mathbb{N}$, which includes some previously known examples.

Motivated by the problem of computing the length sets of certain ideals (see Theorem \ref{thm: I_B atom}), in Section \ref{sec: atoms of Mon(R)} we focus on the submonoid $\Mon(R)\subseteq\I(R)$ of \emph{monomial ideals} of $R$ (i.e., ideals generated by monomials).
We first provide some arithmetic results in the spirit of \cite[Theorem 5.1]{Ge-Kh21}, and then we determine several classes of atoms in this monoid, allowing us to compute the set of lengths of the families of ideals introduced in Theorem \ref{thm: I_B atom}. The main technical ingredient that makes the study of factorizations in $\Mon(R)$ easier than in $\I(R)$ is Lemma \ref{lem: mon_gen} (see also Remark \ref{rem: mon min gen}), greatly restricting the possible generators of the monomial ideals occurring in a given factorization. 

We conclude the paper by outlining, in Section \ref{sec: conclusion}, possible directions for future research on atomic factorizations in the monoids $\I(R)$ and $\Mon(R)$, with the present work serving as a foundation.

\section{Notation and preliminary results}\label{sec: preliminaries}
Throughout the paper, $\mathbb{N}$ and $\mathbb{N}^+$ denote the sets of nonnegative and positive integers, respectively. For $a,b\in \mathbb N \cup \{\infty\}$ we let $\llb a, b\rrb := \allowbreak \{x\in \mathbb{N} \colon a\leq x\leq b\}$ be the discrete interval from $a$ to $b$. For sets $A,B\subseteq\mathbb N$, we denote by $A+B:=\{a+b: a\in A,b\in B\}$ their sumset, by $\Delta(A)\subseteq\mathbb N^+$ the set of distances of $A$, i.e., the set of all $d\in\mathbb N^+$ such that $A\cap [a,a+d]=\{a,a+d\}$ for some $a\in A$, and by $\rho(A):={\sup A}/{\min A}\in\mathbb Q_{\ge 1}\cup\{\infty\}$ the elasticity of $A$ (by convention, we set $\rho(\{0\})=1$).

\subsection{Factorization theory}\label{subsec: factorization theory}
In order to make the paper as self-contained as possible, we recall the basic notions of factorization theory. For further details, examples, and references, the reader can consult the already mentioned monograph \cite{Ge-HK}.

Let $H$ be a commutative semigroup with an identity element, written multiplicatively. We denote by $H^\times$ the subgroup of invertible elements of $H$, also called \emph{units} of $H$. If $H^\times=\{1\}$, we say that $H$ is \emph{reduced}. An element $a\in H$ is \emph{cancellative} if $b,c\in H$ and $ab=ac$ implies $b=c$, while it is \emph{unit-cancellative} if $b\in H$ and $a=ab$ implies $b\in H^\times$. Moreover, we say that $H$ is \emph{cancellative} (respectively, \emph{unit-cancellative}) if all its elements are cancellative (respectively, unit-cancellative).

In this paper, a \emph{monoid} is always a commutative unit-cancellative semigroup with an identity element. Note that a subsemigroup of a monoid inherits its unit-cancellativity; the subsemigroups of monoids we consider will always contain the identity element, and we will therefore speak of submonoids.
A monoid $H$ is cancellative if and only if it
has a quotient group, which we denote by $\mathbf q(H)$.

Let $H$ be a cancellative monoid. An \emph{ideal} of $H$ is a subset $J\subseteq H$ such that, for any $a\in J$, $aH\subseteq J$; they are also commonly referred to as \emph{$s$-ideals} (\emph{semigroup} ideals), especially when $H$ also carries a ring structure, to distinguish them from usual ring ideals. For ideals $J,K$ of $H$, denote by $(J:K)$ the \emph{ideal quotient} $\{x\in\mathbf q(H): xK\subseteq J\}$. An ideal $J$ of $H$ is \emph{divisorial} if $J=(H:(H:J))$. We say that $H$ is a \emph{Mori} monoid if it satisfies the ascending chain condition on divisorial ideals. Furthermore, we denote by $\hat H:=\{x\in\mathbf q(H): ax^n\in H \text{\ for all\ }n\in\mathbb N\text{\ for some\ }a\in H\}$ the \emph{complete integral closure} of $H$, and we say that $H$ is \emph{completely integrally closed} if $H=\hat H$. Finally, we say that $H$ is a \emph{Krull} monoid if it is Mori and completely integrally closed.

Let $H$ be a monoid, not necessarily cancellative. A submonoid $S\subseteq H$ is \emph{divisor-closed} if, whenever $a\in S,b\in H$ and $b\mid h$, we have $b\in S$. For a subset $A\subseteq H$, we denote by $\llb A\rrb$ the smallest divisor-closed submonoid generated by $A$. A monoid homomorphism $\varphi: H\to H'$, where $H'$ is cancellative, is a \emph{divisor homomorphism} if $a,b\in H$ and $\varphi(a)\mid\varphi(b)$ in $H'$ implies that $a\mid b$ in $H$.

An element $a\in H$ is \emph{prime} if $a\notin H^\times$ and $a\mid bc$ with $b,c\in H$ implies $a\mid b$ or $a\mid c$. An element $a\in H$ is an \emph{atom} if $a\notin H^\times$ and $a=bc$ with $b,c\in H$ implies $b\in H^\times$ or $c\in H^\times$; the set of all atoms of $H$ is denoted by $\mathscr A(H)$. A \emph{factorization} of an element $a\in H$ is, up to units, a finite unordered sequence of atoms of $H$, say $u_1,\dots,u_n$, such that $a=u_1\dots u_n$. The \emph{length} of a factorization $z$ is the number of atoms occurring in it, and is denoted by $\lvert z\rvert$. We denote by
\begin{itemize}
    \item $\mathsf Z_H(a)=\mathsf Z(a)$ the \emph{set of factorizations} of $a\in H$;
    \item $\mathsf L_H(a)=\mathsf L(a):=\{\lvert z\rvert:z\in\mathsf Z(a)\}\subseteq\mathbb N$ the \emph{set of lengths} of $a$;
    \item $\mathcal L(H):=\{\mathsf L(a):a\in H\}$ the \emph{system of sets of lengths} of $H$;
    \item $\mathcal U_k(H):=\bigcup_{k\in L\in \mathcal L(H)} L$ the \emph{union of sets of lengths} of $H$ containing $k\in\mathbb N$;
    \item $\rho(H):=\sup\{\rho(L)\colon L\in\mathcal L(H)\}$ the \emph{elasticity} of $H$.
\end{itemize}
Note that, if $S\subseteq H$ is a divisor-closed submonoid and $a\in S$, then $\mathsf Z_S(a)=\mathsf Z_H(a)$ and $\mathsf L_S(a)=\mathsf L_H(a)$. We say that $H$ is
\begin{itemize}
    \item \emph{atomic} if $\mathsf L(a)$ is nonempty for every $a\in H$;
    \item a \emph{$\mathsf{BF}$-monoid} if $\mathsf L(a)$ is finite and nonempty for every $a\in H$;
    \item an \emph{$\mathsf{FF}$-monoid} if $\mathsf Z(a)$ is finite and nonempty for every $a\in H$;
    \item \emph{half-factorial} if $\lvert\mathsf L(a)\rvert = 1$ for every $a\in H$;
    \item \emph{factorial} if $\lvert\mathsf Z(a)\rvert=1$ for every $a\in H$;
    \item \emph{locally finitely generated} if ${\llb a\rrb}$ is finitely generated up to units, for every $a\in H$;
    \item \emph{fully elastic} if, for any $q\in\mathbb Q$ with $1<q<\rho(H)$, there is an $L\in\mathcal L(H)$ such that $\rho(L)=q$.
\end{itemize}
Every Mori monoid is a $\mathsf{BF}$-monoid (\cite[Theorem 2.2.5.1]{Ge-HK}), Krull monoids are locally finitely generated and locally finitely generated monoids are in turn $\mathsf{FF}$-monoids (\cite[Prop.~2.7.8]{Ge-HK}). We note that sets of elasticities of locally finitely generated monoids have been studied in great detail in \cite{Zhong19}. 

A monoid homomorphism $\theta: H\to K$ is called a \emph{transfer homomorphism} if $K=\theta(H)K^\times$, $\theta^{-1}(K^\times)=H^\times$, and, whenever $a\in H$, $\beta,\gamma\in K$ are such that $\theta(a)=\beta\gamma$, there exist $b,c\in H$ such that $a=bc$, $\theta(b)\in \beta K^\times$ and $\theta(c)\in \gamma K^\times$. In Section \ref{sec: atoms of Mon(R)} we will use the following elementary result.

\begin{lemma}\label{lem: transfer Krull}
    If $\theta:H\to K$ is a transfer homomorphism, then $\mathsf L_H(a)=\mathsf L_K(\theta(a))$ for any $a\in H$. In particular, an element $a\in H$ is an atom of $H$ if and only if $\theta(a)$ is an atom of $K$.
\end{lemma}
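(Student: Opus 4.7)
The plan is to prove the length-set equality by two inclusions $\mathsf L_H(a)\subseteq\mathsf L_K(\theta(a))$ and $\mathsf L_K(\theta(a))\subseteq\mathsf L_H(a)$, and then deduce the atom equivalence as a corollary. The two defining properties of a transfer homomorphism drive the two directions: property (T1), namely $K=\theta(H)K^\times$ together with $\theta^{-1}(K^\times)=H^\times$, and the factorization-lifting property (T2). A preliminary observation, used repeatedly, is that (T1) makes units correspond under $\theta$ in both directions.

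For the forward inclusion I would take an atomic factorization $a=u_1\cdots u_n$ in $H$ and verify that $\theta(a)=\theta(u_1)\cdots\theta(u_n)$ is an atomic factorization of length $n$ in $K$. Each $\theta(u_i)$ is a non-unit because $u_i\notin H^\times=\theta^{-1}(K^\times)$, and any hypothetical splitting $\theta(u_i)=\beta\gamma$ can be pulled back via (T2) to $u_i=bc$ with $\theta(b)\in\beta K^\times$ and $\theta(c)\in\gamma K^\times$, so atomicity of $u_i$ forces $\beta$ or $\gamma$ into $K^\times$.

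For the reverse inclusion, given an atomic factorization $\theta(a)=v_1\cdots v_n$ in $K$, I would peel off atoms iteratively: (T2) applied to the splitting $\theta(a)=v_1\cdot(v_2\cdots v_n)$ produces $a=a_1a_1'$ with $\theta(a_1)\in v_1K^\times$ and $\theta(a_1')\in(v_2\cdots v_n)K^\times$; iterating on $a_1'$ yields $a=a_1\cdots a_n$ with each $\theta(a_i)$ associate to $v_i$. Each $a_i$ is then an atom of $H$ by an analogous argument: $\theta(a_i)$ is an atom of $K$ (atomicity is associate-invariant), so any decomposition $a_i=bc$ gives $\theta(a_i)=\theta(b)\theta(c)$ with one factor in $K^\times$, which pulls back to a unit of $H$ via $\theta^{-1}(K^\times)=H^\times$.

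Finally, the atom equivalence follows from the length-set equality together with the standard fact that, in any commutative monoid, an element $a$ is an atom if and only if $\mathsf L(a)=\{1\}$. I do not expect a serious obstacle; the only step requiring care is the iterated application of (T2) in the reverse inclusion, but this amounts to routine bookkeeping.
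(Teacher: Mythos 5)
Your proof is correct. The paper states this lemma without proof (calling it an elementary result), and your argument is the standard one: use the lifting property (T2) to transport atomic factorizations in both directions, with $\theta^{-1}(K^\times)=H^\times$ ensuring that units correspond exactly, and then read off the atom characterization from the equivalence $\mathsf L(a)=\{1\}$ if and only if $a$ is an atom.
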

A monoid $H$ is \emph{transfer Krull} if there exist a Krull monoid $K$ and a transfer homomorphism $\theta:H\to K$. Clearly, any Krull monoid is transfer Krull, since the identity is a transfer homormophism. Conversely, by Lemma \ref{lem: transfer Krull} all transfer Krull monoids are $\mathsf{BF}$-monoids, but they need neither be Mori nor completely integrally closed, as noted in \cite[Example 5.4]{Ge-Zh20}.  All transfer Krull monoids are fully elastic, by \cite[Theorem 3.1]{Ge-Zh19}. Some notable examples of monoids which are \emph{not} transfer Krull are the integer-valued polynomials, by \cite[Remark 12]{Sophie13}, and $\mathcal P_{\rm fin}(\mathbb N)$, by \cite[Prop.~4.12]{Fan-Tr18}.

\subsection{Multivariate polynomial rings and their ideals}\label{subsec: polynomial rings and ideals}
Let $D$ be an integral domain with quotient field $K$, and let $R=D[X_1,\dots,X_N]$ and $S=K[X_1,\dots, X_N]$ be the polynomial rings in $N\ge 2$ indeterminates $X_1, \dots, X_N$ over $D$ and $K$ respectively. 
For $m=(m_1,\dots,m_N)\in\mathbb{N}^N$, let $\mathbf{X}^m:=X_1^{m_1}\cdots X_N^{m_N}$ and set, for every $t\ge 0$,
\[R_t:=\Biggl\{\sum_{m\in\mathbb{N}^N}r_m\mathbf{X}^m\colon r_m\in D\text{ and } m_1+\cdots+m_N=t\Biggr\},\]
\[S_t:=\Biggl\{\sum_{m\in\mathbb{N}^N}q_m\mathbf{X}^m\colon q_m\in K\text{ and } m_1+\cdots+m_N=t\Biggr\}.\]
Then \[R=\bigoplus_{t\ge 0} R_t \text{ and }S=\bigoplus_{t\ge 0} S_t\] are graded rings, equipped with the {\it standard grading}, so that $\deg X_1=\dots=\deg X_N=1$. It is clear that $R_t\subseteq S_t$ for every $t$, that $R_0=D$ and $S_0=K$, and that every $f\in S$ can be written uniquely as $f=\sum_{t\ge 0}f_t$, with $f_t\in S_t$ for every $t$.

We can then define the \evid{min-degree} of a nonzero polynomial $f\in S$ as the smallest nonnegative integer $d$ such that $f_d$ is nonzero, and we denote it as $\mdeg(f)$. We set $\mdeg(0):=+\infty$. For all $f,g\in S\setminus\{0\}$, the following properties hold:

\begin{enumerate}[label=\textup{(\alph{*})}]
    \item $\mdeg(fg)=\mdeg(f)+\mdeg(g)$;
    \item $\mdeg(f+g)\ge\min\{\mdeg(f),\mdeg(g)\}$, with equality if $\mdeg(f)\ne\mdeg(g)$.
\end{enumerate}

Let $I\subseteq R$ be an ideal of $R$. We define the \evid{min-degree} $\mdeg(I)$ of $I$ as the smallest nonnegative integer $d$ such that $I$ contains a polynomial whose min-degree is $d$. The min-degree of the zero ideal is set equal to $+\infty$.

{
\begin{remark}\label{rem: min-deg of fg ideal}
 It is easy to see from the properties of the min-degree, that if $I$ is a nonzero finitely generated ideal of $R$, i.e., $I=\langle g_1, \dots, g_n\rangle$ with $n\ge 1$ and $g_1,\dots, g_n\in R$, then 
 \[\mdeg(I)=\min\{\mdeg(g_i)\colon i\in \llb 1,n\rrb\}.\]
\end{remark}
}

Let $\I(R)$ be the semigroup of nonzero (ring) ideals of $R$, with the usual ideal multiplication. This semigroup is commutative, has an identity element ($R$ itself) and it is reduced.  Moreover, if $R$ satisfies the \emph{Krull Intersection Theorem}, i.e., if $\bigcap_{n\ge 0}I^n = \{0\}$ for every $I\in\I(R)\setminus\{R\}$ (this holds, for example, if $D$ is noetherian), then $\I(R)$ is also unit-cancellative by \cite[Prop.~2.1(4)]{Ge-Kh21}, and thus a monoid in the sense specified in Section \ref{subsec: factorization theory}.

\begin{remark}\label{rmk: cancellative I(R)}
    It is of interest to understand how far $\I(R)$ is from being cancellative. We recall here a remarkable characterization of the cancellative elements of $\I(R)$ due to Anderson and Roitman, valid for \emph{any} commutative ring $R$, not necessarily even a domain: an ideal $I$ of $R$ is cancellative if and only if it is a regular principal ideal at each localization at a maximal ideal \cite{An-Ro97}. If we consider $R=D[X_1,\dots,X_N]$, with $D$ a noetherian domain and $N\ge 2$, it is easy to check that the cancellative elements of $\I(R)$ are precisely the principal ideals of $R$.
\end{remark}

It was proved in Lemma 5.5 of \cite{Ge-Kh21} that the map $\mdeg\colon \I(R)\to \mathbb{N } \text{ defined by } I\mapsto \mdeg(I)$
is a semigroup homomorphism, i.e., for every $I,J\in\I(R)$, 
\begin{equation}\label{eq: additivity min-deg}
\mdeg(IJ)=\mdeg(I)+\mdeg(J).
\end{equation}

For a nonzero ideal $I$ of $R$ and a nonnegative integer $i$, we let \[I[i]:=\{f_i\colon f\in I\}\] be the set of homogeneous components of degree $i$ of the elements of $I$, which is a $D$-module contained in $R_i$. For $I,J\in \I(R)$ and $i,j\in \mathbb{N}$, \[I[i]\cdot J[j]:=\left\{\sum_{s=1}^k x_sy_s\colon k\ge 1, x_s\in I[i], y_s\in J[j] \right\}\] is also a $D$-module.
Moreover, we let \[I_K[i]:=\spa_K\{I[i]\}\] be the $K$-linear span of the elements of $I[i]$. It is clear that $I_K[i]$ is a finite dimensional $K$-vector space contained in $S_i$. 

{
\begin{remark}\label{rem: span of a fg ideal}
If $I=\langle g_1, \dots, g_n\rangle$ is a finitely generated nonzero ideal of $R$ and $\mdeg(I)=m\in \mathbb{N}$, then 
 \[I_K[m]=\spa_K\{(g_i)_m \colon \mdeg(g_i)=m\}.\]
\end{remark}
}

\begin{lemma}[Lemma 5.6 of \cite{Ge-Kh21}]\label{lem_5.6}
Let $I,J\in \I(R)$ and set $\mdeg(I)=d$, $\mdeg(J)=e$ and $\mdeg(IJ)=m$. The following hold:

\vspace{.2cm}
\begin{enumerate*}[label=\textup{(\roman{*})}, mode=unboxed]
    \item\label{lem_5.6(i)} $(IJ)[m]=I[d]\cdot J[e]$;
\end{enumerate*}

\vspace{.2cm}
\begin{enumerate*}[label=\textup{(\roman{*})}, resume, mode=unboxed]
    \item\label{lem_5.6(ii)} $(IJ)_K[m]=I_K[d]\cdot J_K[e]$.
\end{enumerate*}
\end{lemma}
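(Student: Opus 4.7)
The plan is to exploit the additivity of the min-degree, equation~\eqref{eq: additivity min-deg}, which immediately gives $m = d+e$. This is the algebraic key: among the bidegrees $(i,j)$ with $i+j=m$, the only one satisfying $i \ge d$ and $j \ge e$ is $(d,e)$, so any cross-degree contribution to the $m$-th homogeneous component of a product arising from $I \times J$ must vanish.

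For part (i), I would prove both inclusions. For $\supseteq$, given $x \in I[d]$ and $y \in J[e]$, pick $f \in I$ with $f_d = x$ and $g \in J$ with $g_e = y$. Since $\mdeg(f) \ge d$ and $\mdeg(g) \ge e$, the homogeneous components of $f$ and $g$ below these degrees vanish; expanding $fg$ in bidegrees summing to $m$ therefore leaves only $(fg)_m = f_d g_e = xy$, placing $xy$ in $(IJ)[m]$. Closure of $(IJ)[m]$ under finite $D$-sums then gives $I[d]\cdot J[e] \subseteq (IJ)[m]$. For $\subseteq$, take $h \in (IJ)[m]$ and write $h = F_m$ with $F = \sum_{k=1}^n p_k q_k$, $p_k \in I$, $q_k \in J$. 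Expanding each $(p_k q_k)_m = \sum_{i+j=m}(p_k)_i(q_k)_j$ and discarding the cross-terms that vanish by the min-degree bounds yields $h = \sum_k (p_k)_d (q_k)_e \in I[d]\cdot J[e]$.

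For part (ii), I would deduce it from (i) by taking $K$-linear spans: $(IJ)_K[m] = \spa_K\{(IJ)[m]\} = \spa_K(I[d]\cdot J[e])$. To identify the last expression with $I_K[d]\cdot J_K[e]$, one observes that for $K$-subspaces $V \subseteq S_d$ and $W \subseteq S_e$ the set of finite sums $\sum v_s w_s$ with $v_s \in V, w_s \in W$ is already closed under $K$-scaling, and expanding each $v_s$ and $w_s$ as a $K$-linear combination of elements of $I[d]$ and $J[e]$ realizes this set as $\spa_K(I[d]\cdot J[e])$.

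The only genuine subtlety is the $\subseteq$ direction of (i): a generic $F \in IJ$ is a finite sum of products, not a single product, so a priori $F_m$ could pick up contributions from many bidegrees in each summand $p_k q_k$. The min-degree hypothesis is precisely what forces all of these cross-bidegree contributions to collapse, leaving the single clean term in each $p_k q_k$. Once (i) is established, part (ii) is a routine $K$-linear extension.
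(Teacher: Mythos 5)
The paper does not reproduce a proof of this lemma; it is stated as a quotation of Lemma~5.6 of \cite{Ge-Kh21}, so there is no in-paper argument to compare against. Judged on its own merits, your proof is correct and complete, and it is the natural argument one would expect to find in the cited source. You correctly isolate the two load-bearing facts: that $m=d+e$ by the additivity of $\mdeg$ (equation~\eqref{eq: additivity min-deg}), and that $\mdeg(I)=d$ forces $f_i=0$ for every $f\in I$ and every $i<d$ (and likewise for $J$ and $e$), so that in the expansion $(p_kq_k)_m=\sum_{i+j=m}(p_k)_i(q_k)_j$ the only possibly nonzero term is $(p_k)_d(q_k)_e$. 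You also correctly flag the one place where care is needed — a generic $F\in IJ$ is a \emph{sum} of products $p_kq_k$, not a single one — and your handling of it (apply the bidegree collapse summand by summand, then use that $(IJ)[m]$, resp.\ $I[d]\cdot J[e]$, is a $D$-module closed under finite sums) is exactly right. Part~(ii) then follows by the routine observation that $I_K[d]\cdot J_K[e]$ is a $K$-subspace of $S_m$ containing $I[d]\cdot J[e]$, and that bilinear expansion of $K$-linear combinations shows it is no larger than $\spa_K(I[d]\cdot J[e])=\spa_K\{(IJ)[m]\}=(IJ)_K[m]$.
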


Throughout the rest of the paper, we will consider ideals of $R$ generated by polynomials in two indeterminates, say $X_1$ and $X_2$. To slightly ease the notation, we will rename these indeterminates to $X$ and $Y$ respectively.

Finally, let  $\leq$ be the \evid{lexicographic order} on monomials of $S$ defined by $X>Y>X_3> \dots>X_N>1$. We denote by $\ini(f)$ the \emph{initial monomial} of a nonzero $f\in S$ with respect to $\leq$, that is, the largest monomial with respect to the lexicographic order occurring in $f$ with a nonzero coefficient. Following \cite{En-Her}, we set $\ini(0)=0$.

\begin{remark}\label{rem: XY}
    In the notation of Lemma \ref{lem_5.6}, if we further assume $(IJ)_K[m]\subseteq K[X,Y]$, then for every $f\in I_K[d]$ and every $g\in J_K[e]$, we have $\ini(f),\,\ini(g)\in K[X,Y]$. In fact, if there exists a nonzero $f\in I_K[d]$ such that $X_i$ divides $\ini(f)$ for some $i\in \llb 3,N\rrb$, then for any nonzero $g\in J_K[e]$ (one always exists as $J$ is a nonzero ideal) we have $\ini(fg)=\ini(f)\ini(g)\not\in K[X,Y]$, and so $fg\in (IJ)_K[m]\setminus K[X,Y]$, contradicting the assumption.
\end{remark}

\subsection{A link between polynomial ideals and power monoids}\label{subsec: power monoids} Keeping the notation of the previous subsection, we let $R=D[X_1,\dots,X_N]$, with $D$ an integral domain, be a polynomial ring in $N\ge 2$ indeterminates. As mentioned in the introduction, the study of $\I(R)$ from the perspective of factorization theory was initiated in \cite[Sect.~5]{Ge-Kh21}, where it was shown that when $\I(R)$ is a $\mathsf{BF}$-monoid, it is fully elastic and each of its unions of sets of lengths equals $\mathbb N_{\ge 2}$ \cite[Theorem 5.1]{Ge-Kh21}.

The proof of the main theorem relies on the construction of several families of atoms in $\I(R)$:
\[\mathfrak{b}_i(X,Y):=\langle X^i, Y^i\rangle\ {\rm for}\ i\in\mathbb{N}^+,\]
\[\mathfrak{c}_{2i+1}(X,Y):=\langle \{X^{2i+1},X^{2i}Y\}\cup\{X^{2i-j}Y^{j+1}\colon j\in\llb 1,2i+1\rrb\cap2\mathbb{N}\}\rangle\ {\rm for}\ i\in\mathbb{N}^+,\]
\[\mathfrak{c}_{2i}(X,Y):=\langle \{X^{2i},X^{2i-1}Y\}\cup\{X^{2i-j}Y^j\colon j\in\llb 1,2i\rrb\cap2\mathbb{N}\}\rangle\ {\rm for}\ i\in\mathbb{N}_{\ge 3}.\]
These families of atoms
share a common structure: as explained in \cite[Example 5.13]{Ge-Kh21}, they correspond to families of atoms in the monoid of finite nonempty subsets of $\mathbb{N}$ endowed with set-wise addition, known as the \emph{(finitary) power monoid} of $\mathbb{N}$ and denoted by $\mathcal{P}_{{\rm fin}}(\mathbb{N})$. More precisely, $\I(R)$ has a submonoid which is isomorphic to $\mathcal{P}_{{\rm fin}}(\mathbb{N})$. Indeed, the map 
\begin{equation}\label{eq: map}
    \Phi\colon\ \mathcal{P}_{{\rm fin}}(\mathbb{N}) \to \I(R),\ A\mapsto I_A
\end{equation}
that sends a set $A\in\mathcal{P}_{{\rm fin}}(\mathbb{N})$, say $A=\{n_1,\dots, n_\ell\}$ with $\ell\ge 1$ and $0\le n_1<\dots <n_\ell$, to the ideal
\[I_A:=\langle X^{n_\ell-n_1}Y^{n_1},X^{n_\ell-n_2}Y^{n_2}, \dots, X^{n_\ell-n_{\ell-1}}Y^{n_{\ell-1}},Y^{n_\ell}\rangle,\]
is readily seen to be an injective monoid homomorphism. The image of this map, denoted by $\mathcal{M}_2(X,Y)$, consists precisely of those ideals which can be generated by $Y^m$ and a (possibly empty) set of monomials in $X$ and $Y$ of degree $m$, for some $m\in\mathbb{N}$. In particular, $\mathcal{M}_2(X,Y)\subseteq\Mon(R)\subseteq\I(R)$, where $\Mon(R)$ is the monoid of \emph{monomial ideals} of $R$ (i.e., ideals generated by monomials). 

The study of the arithmetic properties of $\mathcal{P}_{{\rm fin}}(\mathbb{N})$ was started by Fan and Tringali in \cite{Fan-Tr18}. In particular, it follows from \cite[Prop.s 3.2 and 3.5]{Fan-Tr18} that $\mathcal{P}_{{\rm fin}}(\mathbb{N})$ is a commutative, reduced, unit-cancellative semigroup with $\{0\}$ as the identity element. Among their results, Fan and Tringali show that for any $i\in\mathbb{N}^+$, the sets $\{0,i\}$ and $\{0,1,3,\dots,2i+1\}$ are atoms, while $\{0,1,2,\dots,2i\}$ is an atom for $i\in\mathbb{N}_{\ge 2}$ (see \cite[Prop.s 4.1(iv) and 4.2]{Fan-Tr18}). The correspondence $\Phi$ in \eqref{eq: map} sends these sets to the ideals $\mathfrak{b}_i(X,Y)$, $\mathfrak{c}_{2i+1}(X,Y)$, and $\mathfrak{c}_{2i}(X,Y)$, respectively. In \cite[Prop.~5.10]{Ge-Kh21}, it is proved that $\mathfrak{b}_i(X,Y)$ and $\mathfrak{c}_{2i+1}(X,Y)$ are atoms of $\I(R)$ for any $i\in\mathbb{N}^+$, while $\mathfrak{c}_{2i}(X,Y)$ is an atom only for $i\ge 3$. 
Note that, \emph{a priori}, the above correspondence ensures only that these ideals are atoms of the submonoid $\mathcal{M}_2(X,Y)$, as monoid isomorphisms are clearly atom-preserving. Indeed, it turns out that in general $\Phi$ does {\it not} send atoms of $\mathcal{P}_{{\rm fin}}(\mathbb{N})$ to atoms of $\I(R)$, as the set $C=\{0,1,2,4\}$ is easily seen to be an atom of $\mathcal{P}_{{\rm fin}}(\mathbb{N})$, while, when $2$ is a unit of $D$, (see \cite[Remark 5.11]{Ge-Kh21})
\[I_C=\mathfrak{c}_4(X,Y)=\langle X^4,X^3Y,X^2Y^2,Y^4\rangle=\langle X^2,XY+Y^2\rangle\langle X^2,XY-Y^2\rangle.\]
On the other hand, we will show in Remark \ref{c4 atom Mon} that $I_C$ is an atom of $\Mon(R)$. Thus, it might be the case that the map $\Phi$ sends atoms of $\mathcal{P}_{{\rm fin}}(\mathbb{N})$ to atoms of $\Mon(R)$.

\section{Atoms and sets of lengths in $\I(R)$}\label{sec: atoms in I(R)}

Throughout this section, we maintain the notation introduced in Section \ref{subsec: polynomial rings and ideals}, and assume that $\I(R)$ is a $\mathsf{BF}$-monoid. This assumption is satisfied if, for instance, $D$ is noetherian (in particular, if $D=K$). Our aim is to find new classes of atoms in $\I(R)$, by extending the methods introduced in \cite[Sect. 5]{Ge-Kh21} to further families of ideals of $R$, corresponding (via the map $\Phi$ in \eqref{eq: map} from $\mathcal{P}_{\rm fin}(\mathbb{N})$ to $\I(R)$) to atoms of the \emph{reduced power monoid} $\mathcal{P}_{{\rm fin},0}(\mathbb{N})$ of $\mathbb{N}$, i.e., the monoid of nonempty finite subsets of $\mathbb{N}$ containing $0$ with setwise addition. Note that $\mathscr A(\mathcal P_{\rm fin}(\mathbb N)) = \mathscr A(\mathcal P_{{\rm fin},0}(\mathbb N))\cup\{\{1\}\}$, since every finite set of positive integers $A=\{a_1,\dots, a_n\}$ with $n\ge 2$ or $a_1\ge 2$ can be written as $A=(A-1)+\{1\}$. It is also worth noting that the monoid homomorphism $\Phi$ restricts to a monoid isomorphism between $\mathcal{P}_{{\rm fin},0}(\mathbb{N})$ and the ideals of $R$ generated, for some $m\in\mathbb{N}$, by $X^m$, $Y^m$ and a set of monomials in $X$ and $Y$ of degree $m$ (cf. \cite[Example 5.13]{Ge-Kh21}).

The following lemma, generalizing the arguments used in \cite{Ge-Kh21}, collects a number of necessary conditions for the existence of a nontrivial factorization for an ideal $I_A$, where $A\in\mathcal{P}_{{\rm fin},0}(\mathbb{N})$. They will be a key tool in the proofs of the main results of this section, namely Theorem \ref{thm: sum-free} and Theorem \ref{thm: I_B atom}.

\begin{lemma}\label{lem: standard}
Let $m\in\mathbb{N}^+$, $\mathcal{G}$ be a set of monomials of degree $m$ in $X$ and $Y$, and let $I=\langle X^m,Y^m,\mathcal{G}\rangle$ be an ideal of $R$. Suppose that $I=\mathfrak{a}\cdot\mathfrak{b}$, for some $\mathfrak{a},\mathfrak{b}\in\I(R)$ such that $I\subsetneq \mathfrak{a}$, $\mathfrak{b}\subsetneq R$, $\mdeg(\mathfrak{a})=d$, $\mdeg(\mathfrak{b})=e$, ${\rm dim}_K(\mathfrak{a}_K[d])=r$ and ${\rm dim}_K(\mathfrak{b}_K[e])=s$. Then, the following hold.

\vspace{.05cm}

\begin{enumerate*}[label=\textup{(\roman{*})}, mode=unboxed]
    \item \label{lem: standard1} $m=d+e$ and $I_K[m]=\mathfrak{a}_K[d]\cdot\mathfrak{b}_K[e]$.
\end{enumerate*}

\vspace{.05cm}

\begin{enumerate*}[label=\textup{(\roman{*})}, resume, mode=unboxed]
    \item \label{lem: standard2} $d\ge 1, e\ge 1$.
\end{enumerate*}

\vspace{.05cm}

\begin{enumerate*}[label=\textup{(\roman{*})}, resume, mode=unboxed]
    \item \label{lem: standard3} $\mathfrak{a}_K[d]$ has dimension $r\ge 2$ and admits a $K$-basis $\{f_1,\dots,f_r\}$ with $X^d=\ini(f_1)>\ini(f_2)>\dots>\ini(f_r)$. Analogously, $\mathfrak{b}_K[e]$ has dimension $r\ge 2$ and admits a $K$-basis $\{g_1,\dots,g_s\}$ with $X^e=\ini(g_1)>\ini(g_2)>\dots>\ini(g_s)$.
\end{enumerate*}

\vspace{.05cm}

\begin{enumerate*}[label=\textup{(\roman{*})}, resume, mode=unboxed]
    \item \label{lem: standard4} For every $j\in\llb 2,r\rrb$ there exists $a_j\in\llb 1,d\rrb$ such that $\ini(f_j)=X^{d-a_j}Y^{a_j}$. Analogously, for each $k\in\llb 2,s\rrb$ there exists $b_k\in\llb 1,e\rrb$ such that $\ini(g_k)=X^{e-b_k}Y^{b_k}$.
\end{enumerate*}

\vspace{.05cm}

\begin{enumerate*}[label=\textup{(\roman{*})}, resume, mode=unboxed]
    \item \label{lem: standard5} Set $a_1=b_1=0$; then, for every $j\in\llb 1,r\rrb$ and every $k\in\llb 1,s\rrb$ we have
    \[X^{m-(a_j+b_k)}Y^{a_j+b_k} \in \{X^m,Y^m\}\cup\mathcal{G}.\]
\end{enumerate*}
\end{lemma}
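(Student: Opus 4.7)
The plan is to deduce the five parts of the lemma in order, each building on the previous.

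For (i), since $I$ is generated by monomials of degree $m$, Remark~\ref{rem: min-deg of fg ideal} gives $\mdeg(I)=m$, and the additivity of the min-degree \eqref{eq: additivity min-deg} then forces $m=d+e$; the equality $I_K[m]=\mathfrak{a}_K[d]\cdot\mathfrak{b}_K[e]$ is precisely Lemma~\ref{lem_5.6}\ref{lem_5.6(ii)}.

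For (ii), I would argue by contradiction: suppose $d=0$. Then $\mathfrak{a}_K[0]$ is a nonzero $K$-subspace of $S_0=K$, so $\mathfrak{a}_K[0]=K$, which implies that $\mathfrak{a}$ contains an element with nonzero constant term; equivalently, $\mathfrak{a}\not\subseteq\langle X,Y\rangle$. On the other hand, $V(\mathfrak{a})\cup V(\mathfrak{b})=V(I)=\{(0,0)\}$ over the algebraic closure of $K$, so $V(\mathfrak{a})\subseteq\{(0,0)\}$. The hypotheses of the lemma should be read as asserting that both factors $\mathfrak{a},\mathfrak{b}$ are nonunits of $\I(R)$ (i.e.\ $\ne R$): the condition $\mathfrak{b}\subsetneq R$ expresses this for $\mathfrak{b}$, and the natural symmetric counterpart for $\mathfrak{a}$ is implicit. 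In particular $\mathfrak{a}\ne R$, whence $V(\mathfrak{a})\ne\emptyset$ over the algebraic closure and therefore $V(\mathfrak{a})=\{(0,0)\}$, forcing $\mathfrak{a}\subseteq\langle X,Y\rangle$ by the Nullstellensatz---the desired contradiction. The inequality $e\geq 1$ follows symmetrically.

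For (iii)--(v), the arguments are Gr\"obner-style manipulations with respect to the lex order. If $r=1$, then $\mathfrak{a}_K[d]=Kf$ for a single homogeneous $f$ of positive degree $d$, and $X^m,Y^m\in I_K[m]=f\cdot\mathfrak{b}_K[e]$ would force $f\mid\gcd(X^m,Y^m)=1$ in the UFD $K[X,Y]$; hence $f\in K^*$, contradicting $d\geq 1$. So $r\geq 2$, and a reduced row-echelon basis of $\mathfrak{a}_K[d]$ with respect to $\leq$ yields $\{f_1,\ldots,f_r\}$ with strictly decreasing initial monomials. To see $\ini(f_1)=X^d$, expand $X^m\in\mathfrak{a}_K[d]\cdot\mathfrak{b}_K[e]$ in the spanning set $\{f_ig_j\}$: the lex-largest product $\ini(f_i)\ini(g_j)$ that appears must equal $X^m$, and as both factors are monomials in $X,Y$ of degrees $d$ and $e$, neither can contain $Y$. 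A symmetric argument produces $g_1$. Part (iv) is then immediate: each $\ini(f_j)$ is of the form $X^{d-a_j}Y^{a_j}$ with $a_j\in\llb 0,d\rrb$ (since everything lives in $K[X,Y]$), and $\ini(f_j)<\ini(f_1)=X^d$ for $j\geq 2$ forces $a_j\geq 1$. For (v), $f_jg_k\in I_K[m]=\spa_K(\{X^m,Y^m\}\cup\mathcal{N})$, so in the monomial expansion of $f_jg_k$ in this basis the initial monomial $\ini(f_j)\ini(g_k)=X^{m-(a_j+b_k)}Y^{a_j+b_k}$ must appear with nonzero coefficient, and hence belongs to $\{X^m,Y^m\}\cup\mathcal{N}$.

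The main obstacle is part (ii): one must pin down the precise nontriviality content of the hypothesis and, in the case of a general domain $D$, invoke an algebraic-geometry input (the Nullstellensatz) rather than the immediate one-line $K$-dimension count that suffices when $D$ is itself a field. Once (i) and (ii) are in hand, parts (iii)--(v) reduce to routine initial-monomial bookkeeping against the lex order.
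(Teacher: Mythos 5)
Your treatment of parts (i) and (iii)--(v) is correct and essentially matches the paper's argument (in (i) you rightly invoke Lemma~\ref{lem_5.6}\ref{lem_5.6(ii)}; the gcd argument for $r\ge 2$ is a pleasant variant of the paper's, which instead argues that $Y^m\notin \spa_K\{X^d\}\cdot\mathfrak{b}_K[e]$, and your leading-monomial bookkeeping for $\ini(f_1)=X^d$ and for (iv)--(v) is the same).

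The real divergence is (ii), where the paper simply cites \cite[Lemma~5.8]{Ge-Kh21} (a min-degree argument), whereas you substitute a Nullstellensatz argument. That substitution does not work in the generality the lemma is stated in, and the gap is not just cosmetic. First, $R=D[X_1,\dots,X_N]$ over an arbitrary integral domain $D$ with quotient field $K$, and the inference ``$\mathfrak{a}\ne R$ implies $V(\mathfrak{a})\ne\emptyset$ over $\overline{K}$'' is false once $D$ is not a field: for instance $\mathfrak{a}=\langle 2\rangle\subsetneq \mathbb{Z}[X,Y]$ has empty vanishing locus over $\overline{\mathbb{Q}}$. Indeed, the whole mechanism collapses, since $d=0$ makes $\mathfrak{a}$ contain a nonzero constant $c\in D$, which becomes a unit in $K$, so $\mathfrak{a}\cdot \overline{K}[X_1,\dots,X_N]=\overline{K}[X_1,\dots,X_N]$ and the scheme-theoretic contradiction you are hoping for never materialises. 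Second, even in the case $D=K$, for $N>2$ the set $V(I)$ is $\{(0,0)\}\times\overline{K}^{\,N-2}$, not a single point, and $V(\mathfrak{a})\subseteq\{(0,0)\}\times\overline{K}^{\,N-2}$ together with $V(\mathfrak{a})\ne\emptyset$ only yields $\sqrt{\mathfrak{a}}\supseteq I(V(\mathfrak{a}))$, which could be a maximal ideal such as $\langle X,Y,X_3-1,X_4,\dots,X_N\rangle$ rather than $\langle X,Y\rangle$; one would then \emph{not} get $\mathfrak{a}\subseteq\langle X,Y\rangle$. This last obstacle can be repaired using the paper's remark (after Lemma~\ref{lem_5.6}) that a monomial ideal in $X,Y$ factors through ideals generated by polynomials in $X,Y$ only, but you do not invoke it, and it does nothing to fix the general-$D$ failure. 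A correct and self-contained alternative would use the min-degree machinery already in play (as the cited Lemma~5.8 does), rather than algebraic geometry: the hypothesis $d=0$ together with $e=m$, $\mathfrak{a}\subsetneq R$, and $I\subsetneq\mathfrak{b}$ must be exploited through Lemma~\ref{lem_5.6} over $D$, not over $\overline{K}$.
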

\begin{proof} It follows from Remark \ref{rem: min-deg of fg ideal} that $\mdeg(I)=m$, and item \ref{lem: standard1} then follows from equation (\ref{eq: additivity min-deg}) and Lemma \ref{lem_5.6}\ref{lem_5.6(i)}; item \ref{lem: standard2} follows from \cite[Lemma 5.8]{Ge-Kh21}, while item \ref{lem: standard4} is a consequence of \ref{lem: standard3} and \ref{rem: XY}.

\medskip

Since ${\rm dim}_K(\mathfrak{a}_K[d])=r$ and ${\rm dim}_K(\mathfrak{b}_K[e])=s$, we get from \cite[Lemma 5.9]{Ge-Kh21} that there are bases $\{f_1,\dots,f_r\}$ of $\mathfrak{a}_K[d]$ and $\{g_1,\dots,g_s\}$ of $\mathfrak{b}_K[e]$ with $\ini(f_1)>\ini(f_2)>\dots>\ini(f_r)$ and $\ini(g_1)>\ini(g_2)>\dots>\ini(g_s)$. Note that $\ini(f_j)$ and $\ini(g_k)$ are monomials of degree $d$ and $e$ respectively, for all $j\in\llb 1,r\rrb$ and $k\in\llb 1,s\rrb$. If $r=1$, we get from \ref{lem: standard1} that
    \[I_K[m]\ni X^m = \sum_{k=1}^s \alpha_kf_1g_k\ \quad {\rm for\ some}\ \alpha_1,\dots.\alpha_s\in K.\]
Thus, we obtain $f_1=X^d$, implying in turn that $Y^m\notin \spa_K\{f_1\}\cdot\mathfrak{b}_K[e]$, contradicting \ref{lem: standard1}. Therefore, it must be $r\ge 2$, and similarly we obtain that also $s\ge 2$. On the other hand, if $\ini(f_1)\ne X^d$, then for every $f\in\mathfrak{a}_K[d]\setminus\{0\}$ we have $\ini(f)<X^d$, hence $\ini(fg)<X^m$ for every  $f\in\mathfrak{a}_K[d]\setminus\{0\}$,  $g\in\mathfrak{b}_K[e]\setminus\{0\}$, and so $X^m\notin I_K[m]$, contradicting \ref{lem: standard1}. Thus, $\ini(f_1)=X^d$, and analogously we can prove that also $\ini(g_1)=X^e$. This concludes the proof of item \ref{lem: standard3}.

\medskip

To prove item \ref{lem: standard5}, consider, for any $j\in\llb 1,r\rrb$ and $k\in\llb 1,s\rrb$,
\[\ini(f_jg_k) = \ini(f_j)\ini(g_k) = X^{d-a_j+e-b_k}Y^{a_j+b_k} = X^{m-(a_j+b_k)}Y^{a_j+b_k},\]
where we have used \ref{lem: standard4} and \ref{lem: standard1}. Since $f_jg_k\in I_K[m] = \spa_K(\{X^m,Y^m\}\cup\mathcal{G})$, we must also have $\ini(f_jg_k)\in I_K[m]$, and the conclusion follows from the above equalities.
\end{proof}

\subsection{Atoms constructed from sum-free sets of positive integers}\label{subsec: sum-free} We are now in the position to extend the results on the atomicity of $\mathfrak{b}_i(X,Y)$ and $\mathfrak{c}_{2i+1}(X, Y)$ to a larger family of ideals. Before proceeding, let us recall a classical definition from additive combinatorics: a nonempty subset $A\subseteq \mathbb{N}$ is \evid{sum-free} if $A\cap(A+A)=\emptyset$. Easy examples of sum-free sets include any set of odd positive integers and any subset of $\{\lceil{(n+1)/2}\rceil,\dots,n\}$, where $n\in\mathbb N$ and $\lceil\ \rceil$ denotes the usual ceiling function. In fact, Green proved in \cite{Green04} that these examples already give the correct order of magnitude for the number of sum-free sets: more precisely, there are $O(2^{\nicefrac{n}{2}})$ sum-free subsets of $\{1,\dots,n\}$ for $n\to\infty$, as predicted by the Cameron-Erd\"os Conjecture.

Clearly, a finite sum-free set is contained in $\mathcal{P}_{{\rm fin}}(\mathbb{N})\setminus\mathcal{P}_{{\rm fin},0}(\mathbb{N})$. It follows that no sum-free set different from $\{1\}$ is an atom of $\mathcal{P}_{{\rm fin}}(\mathbb{N})$ (see the remarks at the beginning of the section). In preparation for Theorem \ref{thm: sum-free}, we note that adjoining the zero element to sum-free subsets of $\mathbb{N}$ turns them into atoms of $\mathcal{P}_{{\rm fin}}(\mathbb{N})$.

\begin{lemma}\label{lem: sum-free with zero is atom}
If $A$ is finite and sum-free, then $\{0\}\cup A$ is an atom of $\mathcal{P}_{\rm fin}(\mathbb{N})$.
\end{lemma}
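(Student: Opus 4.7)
The plan is to show directly that $B := \{0\}\cup A$ admits no nontrivial factorization in $\mathcal{P}_{\rm fin}(\mathbb{N})$, using sum-freeness only through the two facts that $0 \notin A$ and that $A$ contains no sum of two of its elements.

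First I would observe that any sum-free subset of $\mathbb{N}$ is contained in $\mathbb{N}^+$: if $0 \in A$, then $0 = 0+0 \in A\cap(A+A)$, contradicting sum-freeness. In particular $A$ is disjoint from $\{0\}$, and $B \neq \{0\}$, so $B$ is not the identity element of the reduced monoid $\mathcal{P}_{\rm fin}(\mathbb{N})$.

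Next I would suppose $B = C + D$ for some $C, D \in \mathcal{P}_{\rm fin}(\mathbb{N})$, and assume for contradiction that both $C \neq \{0\}$ and $D \neq \{0\}$. Since $0 \in B$ and the elements of $C$ and $D$ are nonnegative, we must have $0 \in C$ and $0 \in D$; by the contradiction hypothesis there exist $c \in C$ and $d \in D$ with $c, d \in \mathbb{N}^+$. Then $c = c + 0 \in C+D = B$ and, being positive, $c \in A$; similarly $d \in A$. Moreover $c + d \in C+D = B$ and $c+d > 0$, so $c+d \in A$ as well. This produces $c+d \in A \cap (A+A)$, contradicting $A \cap (A+A) = \emptyset$. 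Hence one of $C$, $D$ equals $\{0\}$, so $B$ is an atom.

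There is no real obstacle here: once the factorization is spelled out, the combinatorial contradiction falls out immediately from the definition of sum-freeness. The only subtlety worth flagging is the initial remark that sum-free sets avoid $0$, which is what guarantees that $B$ is itself a non-unit and that the argument on $c, d \in \mathbb{N}^+$ is available.
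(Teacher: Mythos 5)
Your proof is correct and follows essentially the same line as the paper's: assume a nontrivial factorization $\{0\}\cup A = C+D$, deduce $0\in C\cap D$, pick positive elements $c\in C$, $d\in D$, and observe that $c$, $d$, and $c+d$ must all lie in $A$, contradicting sum-freeness. The only cosmetic difference is that the paper chooses $c=\max C$ and $d=\max D$, whereas you take arbitrary positive elements, and you also make explicit the preliminary observation that $0\notin A$ (so $\{0\}\cup A$ is a non-unit), which the paper leaves implicit.
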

\begin{proof}
Suppose that there exist $B,C\in\mathcal{P}_{\rm fin}(\mathbb{N})\setminus\{\{0\}\}$ such that
\[\{0\}\cup A = B+C.\]
Firstly, note that $0\in B\cap C$, for otherwise $0$ cannot be an element of $B+C$. Furthermore, since $B$ and $C$ both differ from $\{0\}$, both $\beta:=\max{B}$ and $\gamma:=\max{C}$ are positive integers. The set $B+C$ contains, in particular, the elements $\beta,\gamma$ and $\beta+\gamma$; since they are all positive, they necessarily belong to $A$, but this is a contradiction, as $A$ was supposed to be sum-free.
\end{proof}

For every $i\in\mathbb{N}^+$, the sets $\beta_i=\{i\}$ and $\delta_{2i+1}=\{2\llb 0,i\rrb +1\}$ are clearly sum-free. We then get from Lemma \ref{lem: sum-free with zero is atom} an alternative proof of the fact (mentioned in the introduction and proved in \cite{Fan-Tr18}) that $\{0\}\cup \beta_i$ and $\{0\}\cup \delta_{2i+1}$ are atoms of $\mathcal{P}_{\rm fin}(\mathbb{N})$. The next theorem (the first of the main results of this section) provides a general method for constructing atoms in $\I(R)$ from sum-free subsets of $\mathbb{N}$ and, as a special case, yields the atomicity of $\mathfrak{b}_i(X,Y)=I_{\{0\}\cup \beta_i}$ and $\mathfrak{c}_{2i+1}(X, Y)=I_{\{0\}\cup \delta_{2i+1}}$.

\begin{theorem}\label{thm: sum-free}
    If $A$ is a finite sum-free subset of $\mathbb{N}$, then $I_{\{0\}\cup A}$ is an atom of $\I(R)$.
\end{theorem}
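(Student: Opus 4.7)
The plan is to assume, for contradiction, that $I_{\{0\}\cup A}$ admits a nontrivial factorization and then extract from Lemma \ref{lem: standard} a triple of elements of $A$ that violates the sum-free condition $A\cap(A+A)=\emptyset$.

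First, I would observe that writing $m:=\max A$ and $\mathcal{N}:=\{X^{m-n}Y^n:n\in A\setminus\{m\}\}$ puts $I:=I_{\{0\}\cup A}$ into the standard form $\langle X^m,Y^m,\mathcal{N}\rangle$ required by Lemma \ref{lem: standard}. Suppose $I$ is not an atom, so $I=\mathfrak{a}\cdot\mathfrak{b}$ with $\mathfrak{a},\mathfrak{b}\in\I(R)\setminus\{R\}$ (the only unit of $\I(R)$ being $R$ itself). Then $I=\mathfrak{a}\mathfrak{b}\subseteq\mathfrak{a}$, and the equality $I=\mathfrak{a}$ would force $\mathfrak{b}=R$ by unit-cancellativity, a contradiction; hence the hypothesis $I\subsetneq\mathfrak{a}$, $\mathfrak{b}\subsetneq R$ of Lemma \ref{lem: standard} is satisfied.

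Next, I would invoke Lemma \ref{lem: standard}\ref{lem: standard3}--\ref{lem: standard5} to produce integers $a_1=0,a_2,\dots,a_r\in\llb 0,d\rrb$ and $b_1=0,b_2,\dots,b_s\in\llb 0,e\rrb$ with $r,s\ge 2$, $d+e=m$, and
\[X^{m-(a_j+b_k)}Y^{a_j+b_k}\in\{X^m,Y^m\}\cup\mathcal{N}\quad\text{for all } j\in\llb 1,r\rrb,\ k\in\llb 1,s\rrb.\]
Since $\{X^m,Y^m\}\cup\mathcal{N}=\{X^{m-n}Y^n:n\in\{0\}\cup A\}$, this is equivalent to
\[a_j+b_k\in\{0\}\cup A\quad\text{for all } j,k.\]

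Finally, I would specialize this to three particular pairs: taking $(j,k)=(2,1)$ gives $a_2\in\{0\}\cup A$, and since $a_2\ge 1$ by Lemma \ref{lem: standard}\ref{lem: standard4}, in fact $a_2\in A$; symmetrically $b_2\in A$; taking $(j,k)=(2,2)$ gives $a_2+b_2\in\{0\}\cup A$, and since $a_2+b_2\ge 2$ we conclude $a_2+b_2\in A$. But then $a_2+b_2\in A\cap(A+A)$, contradicting the sum-freeness of $A$. Hence $I$ is an atom.

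The argument is essentially a direct contradiction once Lemma \ref{lem: standard} is in hand, so there is no real obstacle; all the technical work (initial-monomial gymnastics, dimension counts, and passage from ideal multiplication to the interaction of the leading-form $K$-spans) has been absorbed into the lemma, and the theorem becomes a clean combinatorial consequence.
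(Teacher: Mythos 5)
Your proof is correct and follows essentially the same route as the paper: in both cases one puts $I_{\{0\}\cup A}$ in the standard form $\langle X^m,Y^m,\mathcal N\rangle$, feeds it to Lemma \ref{lem: standard}, and reads off from part \ref{lem: standard5} the three positive integers $a_2$, $b_2$, $a_2+b_2\in A$, which contradicts sum-freeness. The only cosmetic difference is that you spell out the verification of the hypothesis $I\subsetneq\mathfrak a$, $\mathfrak b\subsetneq R$ via unit-cancellativity, a step the paper leaves implicit.
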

\begin{proof}
    Suppose that $I_{\{0\}\cup A}=\mathfrak{a}\cdot\mathfrak{b}$ for some $\mathfrak{a},\mathfrak{b}\in\I(R)\setminus\{R\}$. Set $m=\max{A}=\mdeg(I_{\{0\}\cup A})$, $d=\mdeg(\mathfrak{a})$ and $e=\mdeg(\mathfrak{b})$. Since $\{0\}\cup A\in\mathcal{P}_{{\rm fin},0}(\mathbb{N})$, we can apply Lemma \ref{lem: standard}. Therefore, we obtain $d\ge 1$, $e\ge 1$, $m=d+e$,
    \[{(I_{\{0\}\cup A})}_K[m]=\mathfrak{a}_K[d]\cdot\mathfrak{b}_K[e],\]
    $\mathfrak{a}_K[d]=\spa_K\{f_1,\dots,f_r\}$, where $r\ge 2$ and $X^d=\ini(f_1)>\dots>\ini(f_r)$, $\mathfrak{b}_K[e]=\spa_K\{g_1,\dots,g_s\}$, where $s\ge 2$ and $X^e=\ini(g_1)>\dots>\ini(g_s)$. Let $\ini(f_2)=X^{d-a}Y^a$ for some $a\in\llb 1,d\rrb$ and $\ini(g_2)=X^{e-b}Y^b$ for some $b\in\llb 1,e\rrb$. Then Lemma \ref{lem: standard}(\ref{lem: standard5}) yields
    \[X^{m-a}Y^a, X^{m-b}Y^b, X^{m-(a+b)}Y^{a+b} \in \{X^{m-c}Y^c\colon c\in\{0\}\cup A\},\]
    i.e., $a,b,a+b\in\{0\}\cup A$. Since both $a$ and $b$ are positive, we actually get $a,b,a+b\in A$, but this contradicts the assumption that $A$ is a sum-free set.
\end{proof}
In light of the remarks at the beginning of this section, the above theorem gives a substantial number of examples of atoms of $\I(R)$. As a further corollary, we also obtain an $\I(R)$-analogue of \cite[Prop.~4.1(v)]{Fan-Tr18}.
\begin{corollary}
Let $m\in\mathbb{N}_{\ge 2}$. For any integer $j\in\llb 1,m-1\rrb$, the ideal $I=I_{\{0,j,m\}}=\langle X^m, X^{m-j}Y^j, Y^m\rangle$ is an atom of $\I(R)$ if and only if $m\ne 2j$.
\end{corollary}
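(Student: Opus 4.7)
The plan is to reduce this to Theorem \ref{thm: sum-free} essentially directly. Observe that the ideal in question is $I = I_{\{0\}\cup A}$ with $A = \{j, m\}$, so Theorem \ref{thm: sum-free} applies as soon as we can verify that $A$ is sum-free, i.e., $A\cap(A+A)=\emptyset$.

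Computing $A+A = \{2j,\, j+m,\, 2m\}$ and using the hypothesis $1\le j\le m-1$, I would check the six possible coincidences between elements of $A$ and $A+A$ one by one. The equalities $j=2j$, $m=2m$, $j=j+m$, $m=j+m$ all force $j=0$ or $m=0$; the equality $j=2m$ is impossible because $j\le m-1<2m$. The only remaining possibility is $m=2j$. Hence $A$ is sum-free precisely when $m\ne 2j$, which combined with Theorem \ref{thm: sum-free} settles the \textbf{if} direction.

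For the \textbf{only if} direction, assume $m=2j$ and exhibit an explicit nontrivial factorization. The identity
\[
\langle X^j, Y^j\rangle^2 = \langle X^{2j},\, X^jY^j,\, Y^{2j}\rangle
\]
shows that $I = \mathfrak b_j(X,Y)^2$. Since $j\ge 1$, the ideal $\langle X^j,Y^j\rangle$ is contained in the maximal ideal $\langle X,Y\rangle$ and is therefore a proper ideal of $R$ different from the unit $R$, so this is a genuine factorization of $I$ into two non-units and $I$ is not an atom.

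There is no real obstacle here: once $A$ is identified and one observes that $A+A$ has only three elements, the sum-free check is a small finite verification, and the converse is witnessed by the classical identity $\langle X^j,Y^j\rangle^2 = \langle X^{2j}, X^jY^j, Y^{2j}\rangle$.
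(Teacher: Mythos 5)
Your proof is correct and follows essentially the same route as the paper's: both directions are handled by noting that $\{j,m\}$ is sum-free exactly when $m\ne 2j$ (so Theorem \ref{thm: sum-free} gives atomicity), and that in the exceptional case $I=\langle X^j,Y^j\rangle^2$ is a nontrivial factorization. Your version simply spells out the finite sum-free check in more detail than the paper does.
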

\begin{proof}
If $m=2j$, then $I={\langle X^j,Y^j\rangle}^2$ is clearly not an atom. Conversely, if $m\ne 2j$, then the set $\{j,m\}$ is sum-free, and $I=I_{\{0,j,m\}}$ is an atom by Theorem \ref{thm: sum-free}.
\end{proof}

\subsection{A further family of atoms}\label{subsect: I_B}
For every $i\in \mathbb N_{\ge 2}$, the set $\delta_{2i}=\{1, 2\llb 1, i\rrb\}$ is clearly not sum-free. Nonetheless, as recalled in Section \ref{subsec: power monoids}, $\mathfrak{c}_{2i+1}(X,Y)=I_{\{0\}\cup \delta_{2i}}$ is an atom for every $i\ge 3$. In this section, we will provide a further class of atoms not covered by Theorem \ref{thm: sum-free}.

Following \cite[Prop.~4.10]{Fan-Tr18}, fix a positive integer $n\ge2$ and let $a_1,\dots,a_n,a_{n+1}$ be positive integers satisfying the following conditions: 
\begin{enumerate}
    \item[(C1)]\label{cond_1} $a_{n+1}=a_1+\dots+a_{n-1}+2a_n$,
    \item[(C2)]\label{cond_2} $a_{i+1}>2(a_1+\dots+a_i)$ for all $i\in\llb 1,n-1\rrb$.
\end{enumerate}
Further, define
\[A_n:=\left\{a_I \colon I\subseteq \llb 1,n-1\rrb\right\},\]
\[B_n:= A_n \cup \{a_{\llb 1, n \rrb}\} \cup (A_n +a_{n+1}),\]
\[C_n:=\left\{a_I \colon I\subseteq\llb 1,n+1\rrb\right\},\]
where we set $a_\emptyset = 0$ and $a_I = \sum_{i\in I}a_i$ for every nonempty subset $I \subseteq \llb 1,n+1\rrb$.
In \cite[Prop.~4.10(i)]{Fan-Tr18}, it is proved that, for every $n\ge 2$, the set $B_n$ is an atom of $\mathcal P_{{\rm fin},0}(\mathbb N)$.

\begin{remark}\label{rem:m/2}
By the definition of the $a_i$'s it is clear that $\max(B_n)=a_{\llb 1,n-1\rrb}+a_{n+1}=a_{\llb 1,n-1\rrb\cup\{a_{n+1}\}}$. By condition (C1), if $a_{\llb 1,n-1\rrb}$ is odd (resp. even), then $a_{n+1}$ is odd (resp. even), hence $m=\max(B_n)$ is always even. In particular, $m=a_{\llb 1,n-1\rrb}+a_{n+1}=2 a_{\llb 1,n-1\rrb}+2a_{n}=2a_{\llb 1,n\rrb}$. 
\end{remark}

Let us focus on the ideals of $R$
\[I_{B_n}=\langle X^{m-b}Y^b \colon b\in B_n\rangle\text{ and }I_{C_n}=\langle X^{a_{\llb1,n+1\rrb}-c}Y^c \colon c\in C_n\rangle. \]

\begin{remark}\label{rem: I_C as product of b_i's}
The following identities can be easily verified by a direct computation or using \cite[Prop.~4.10 (ii)]{Fan-Tr18} and the monoid homomorphism $\Phi\colon\ {P}_{{\rm fin}}(\mathbb{N})\to\I(R)$.
\begin{equation}\label{eq: I_C_n decompositions}
I_{C_n}=\mathfrak{b}_{a_1}\cdot \dots \cdot \mathfrak{b}_{a_{n+1}}=\mathfrak{b}_{a_n}\cdot I_{B_n}.    
\end{equation}
We recall here that $\mathfrak{b}_i = \langle X^i, Y^i \rangle \in \mathscr A(\I(R))$ for every $i\in \mathbb N^+$.
\end{remark}

The next theorem provides a new class of atoms of $\I(R)$, namely the ideals $I_{B_n}$, and gives bounds on the set of lengths of the ideals $I_{C_n}$.

\begin{theorem} \label{thm: I_B atom}

\begin{enumerate*}[label=\textup{(\arabic{*})}, resume, mode=unboxed]
\item\label{thm: I_B atom(1)} $I_{B_n}$ is an atom of $\I(R)$ for every $n \geq 2$.
\end{enumerate*}

\vspace{.2cm}

\begin{enumerate*}[label=\textup{(\arabic{*})}, resume, mode=unboxed]
\item\label{thm: I_B atom(2)} $\{2, n+1\} \subseteq \mathsf{L}_{\I(R)}(I_{C_n}) \subseteq \llb 2, n+1 \rrb$.
\end{enumerate*}
\end{theorem}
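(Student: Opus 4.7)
The overall strategy for both claims is to use Lemma \ref{lem: standard} to translate any multiplicative decomposition of $I_{B_n}$ or $I_{C_n}$ in $\mathcal{I}(R)$ into an additive sumset decomposition of $B_n$ or $C_n$ in the reduced power monoid $\mathcal{P}_{\rm fin,0}(\mathbb{N})$, and then invoke the arithmetic of these sets already established in \cite[Prop.~4.10]{Fan-Tr18}.

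For part \ref{thm: I_B atom(1)}, I would argue by contradiction: suppose $I_{B_n}=\mathfrak{a}\cdot\mathfrak{b}$ with $\mathfrak{a},\mathfrak{b}\in\mathcal{I}(R)\setminus\{R\}$. Since $0\in A_n\subseteq B_n$ and $m:=\max B_n=2a_{\llb 1,n\rrb}\in B_n$ by Remark \ref{rem:m/2}, the ideal $I_{B_n}$ has the form required by Lemma \ref{lem: standard}. Its parts \ref{lem: standard3}--\ref{lem: standard5} yield sets $A,B\in\mathcal{P}_{\rm fin,0}(\mathbb{N})$ with $|A|,|B|\ge 2$ and $A+B\subseteq B_n$. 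For the reverse inclusion $B_n\subseteq A+B$, I would use part \ref{lem: standard1}: the identity $(I_{B_n})_K[m]=\mathfrak{a}_K[d]\cdot\mathfrak{b}_K[e]$, combined with the initial monomial analysis from \cite[Lemma 5.9]{Ge-Kh21}, forces every generator $X^{m-b}Y^b$ ($b\in B_n$) to be the initial monomial of some product $f_jg_k$, hence $b\in A+B$. Thus $B_n=A+B$ is a nontrivial decomposition in $\mathcal{P}_{\rm fin,0}(\mathbb{N})$, contradicting the atomicity of $B_n$ given by \cite[Prop.~4.10(i)]{Fan-Tr18}.

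For part \ref{thm: I_B atom(2)}, the lower bound is immediate from \eqref{eq: I_C_n decompositions}: $I_{C_n}=\mathfrak{b}_{a_1}\cdots\mathfrak{b}_{a_{n+1}}$ is an atomic factorization of length $n+1$, while $I_{C_n}=\mathfrak{b}_{a_n}\cdot I_{B_n}$ is an atomic factorization of length $2$ in view of part \ref{thm: I_B atom(1)}. For the upper bound, consider an arbitrary atomic factorization $I_{C_n}=\mathfrak{a}_1\cdots\mathfrak{a}_\ell$. Applying Lemma \ref{lem: standard} to the pair $\bigl(\mathfrak{a}_j,\prod_{i\ne j}\mathfrak{a}_i\bigr)$ (the second factor being a proper ideal, since $\ell\ge 2$) forces $\dim(\mathfrak{a}_j)_K[d_j]\ge 2$ for each $j$; iterating Lemma \ref{lem_5.6}\ref{lem_5.6(ii)} then gives $(I_{C_n})_K[m]=\prod_i(\mathfrak{a}_i)_K[d_i]$ with $m=a_{\llb 1,n+1\rrb}$, and the initial-monomial argument of part \ref{thm: I_B atom(1)} produces sets $A_1,\dots,A_\ell\in\mathcal{P}_{\rm fin,0}(\mathbb{N})$, each with $|A_i|\ge 2$, satisfying $A_1+\cdots+A_\ell=C_n$. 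Refining each $A_i$ to an atomic factorization in $\mathcal{P}_{\rm fin,0}(\mathbb{N})$ yields an atomic factorization of $C_n$ of length at least $\ell$; combined with the bound $\max\mathsf{L}_{\mathcal{P}_{\rm fin,0}(\mathbb{N})}(C_n)\le n+1$ from \cite[Prop.~4.10]{Fan-Tr18}, this forces $\ell\le n+1$. The lower bound $\ell\ge 2$ is automatic since $I_{C_n}$ is not an atom.

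The main obstacle I foresee is guaranteeing the upper bound $\max\mathsf{L}_{\mathcal{P}_{\rm fin,0}(\mathbb{N})}(C_n)\le n+1$. If this is not stated explicitly in \cite{Fan-Tr18}, I would prove it directly by exploiting the rapid growth imposed by (C1)--(C2): the subset sums $a_I$ ($I\subseteq\llb 1,n+1\rrb$) are pairwise distinct, so any sumset decomposition $C_n=A_1+\cdots+A_\ell$ with $0\in A_i$ and $|A_i|\ge 2$ corresponds to a partition of $\llb 1,n+1\rrb$ into $\ell$ nonempty parts, whence $\ell\le n+1$. The only delicate point in this combinatorial step is the collision $a_{\llb 1,n\rrb}+a_n=a_{n+1}$, which is the sole non-trivial relation among subset sums of distinct index sets and must be handled separately.
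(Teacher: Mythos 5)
Your central reduction step has a genuine gap. You assert that Lemma~\ref{lem: standard}, together with the identity $(I_{B_n})_K[m]=\mathfrak{a}_K[d]\cdot\mathfrak{b}_K[e]$, forces \emph{every} generator $X^{m-b}Y^b$ with $b\in B_n$ to be the initial monomial of some product $f_jg_k$, so that $B_n=A+B$. Only the inclusion $A+B\subseteq B_n$ follows (this is exactly Lemma~\ref{lem: standard}\ref{lem: standard5}). The reverse inclusion can fail: the $f_jg_k$ span $(I_{B_n})_K[m]$, but Gaussian elimination on a spanning set can produce new initial monomials that are not initial monomials of any $f_jg_k$. Concretely, take $\mathfrak{a}=\langle X,Y\rangle$, $\mathfrak{b}=\langle X^2,XY-Y^2\rangle$, with bases $f_1=X$, $f_2=Y$ and $g_1=X^2$, $g_2=XY-Y^2$; then $A=B=\{0,1\}$ and $A+B=\{0,1,2\}$, yet $\mathfrak{a}\mathfrak{b}=\langle X,Y\rangle^3=I_{\{0,1,2,3\}}$, since $Y^3=f_2g_1-f_1g_2-f_2g_2$ lies in the product without being the initial monomial of any $f_jg_k$. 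More tellingly, if your reduction were valid it would prove in general that every atom $B$ of $\mathcal{P}_{\mathrm{fin},0}(\mathbb{N})$ gives an atom $I_B$ of $\mathcal{I}(R)$; but $\{0,1,2,4\}$ is such an atom while $\mathfrak{c}_4=I_{\{0,1,2,4\}}=\langle X^2,XY+Y^2\rangle\langle X^2,XY-Y^2\rangle$ is not an atom when $2\in K^\times$, as the paper itself recalls in Section~\ref{subsec: power monoids}. The same defect undermines your Part~\ref{thm: I_B atom(2)}: you only obtain $A_1+\cdots+A_\ell\subseteq C_n$, so the lift to a factorization of $C_n$ in $\mathcal{P}_{\mathrm{fin},0}(\mathbb{N})$ (and hence the bound $\ell\le n+1$) is not justified.

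The paper sidesteps this entirely. For Part~\ref{thm: I_B atom(1)} it does not try to show $A+B=B_n$; instead, using only the forward inclusion and Lemma~\ref{lemma_3}\ref{lemma_3(2)}, it bounds the number of admissible index sets $J$ compatible with the $I$'s and concludes $\dim(\mathfrak{a}_K[d]\cdot\mathfrak{b}_K[e])\le 2^n<2^n+1=|B_n|=\dim(I_{B_n})_K[m]$, a contradiction. For Part~\ref{thm: I_B atom(2)} the paper again works with only $\sum_{i}a_{I_2^{(i)}}\in C_n$, showing via Lemma~\ref{lemma_3}\ref{lemma_3(3)} that taking $n+2$ factors forces $n\in I_2^{(i)}$ for all $i$ and $I_2^{(i)}\cup I_2^{(j)}=\llb 1,n\rrb$, whence the sum exceeds $\mu$ — a direct contradiction, with no appeal to lengths in $\mathcal{P}_{\mathrm{fin},0}(\mathbb{N})$. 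Your lower bound $\{2,n+1\}\subseteq\mathsf{L}(I_{C_n})$ is fine and matches the paper.
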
 
 
Before proving this result, we need some technical lemmas based on the properties of the positive integers $a_1, \dots, a_{n+1}$. 

\begin{lemma}\label{lemma_1}
Let $n,k$ be positive integers such that $n\ge 2$ and $k\le n$ and let $I,J\subseteq \llb 1,k\rrb$. Then $a_I+a_J=a_H$ for some $H\subseteq \llb 1,k\rrb$ if and only if $I\cap J=\emptyset$.
\end{lemma}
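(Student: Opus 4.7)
The plan is to treat the two directions separately, with the forward direction being the substantive one.

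For the backward direction ($\Leftarrow$), I would argue that if $I\cap J=\emptyset$, then $H:=I\cup J\subseteq\llb 1,k\rrb$ works, since $a_I+a_J=\sum_{i\in I}a_i+\sum_{i\in J}a_i=a_{I\cup J}=a_H$.

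For the forward direction ($\Rightarrow$), the strategy is to extract from condition~(C2) the following uniqueness-of-representation lemma: for every $k\le n$, any nonnegative integer admits at most one representation of the form $\sum_{i=1}^{k}c_i a_i$ with coefficients $c_i\in\{0,1,2\}$. To prove this, I would suppose two such tuples $(c_i),(c_i')$ yield the same sum, let $j$ be the largest index where they differ, and (by symmetry) assume $c_j-c_j'\ge 1$. Then
\[
a_j \;\le\; (c_j-c_j')\,a_j \;=\; \sum_{i<j}(c_i'-c_i)\,a_i \;\le\; 2(a_1+\cdots+a_{j-1}),
\]
which contradicts (C2) at index $j-1$ (this index is valid because $j\ge 2$, since $j=1$ would force $a_1=0$, and $j\le k\le n$).

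With the lemma at hand, the conclusion is a one-liner: rewrite $a_I+a_J=\sum_{i=1}^{k}\bigl([i\in I]+[i\in J]\bigr)\,a_i$, a representation with coefficients in $\{0,1,2\}$, and $a_H=\sum_{i=1}^{k}[i\in H]\,a_i$, a representation with coefficients in $\{0,1\}\subseteq\{0,1,2\}$. Uniqueness forces $[i\in I]+[i\in J]=[i\in H]\in\{0,1\}$ for every $i$, which is precisely $I\cap J=\emptyset$. The only real content is the uniqueness lemma, and I do not expect any substantive obstacle: condition~(C2) was precisely designed (being strictly stronger than standard super-increasingness) to handle coefficient range $\{0,1,2\}$ rather than $\{0,1\}$.
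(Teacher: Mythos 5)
Your proof is correct. Both directions check out: the backward implication is immediate with $H = I\cup J$, and the forward implication follows cleanly from the uniqueness lemma. The uniqueness lemma itself is sound: if $(c_i)\ne(c_i')$ agree above the largest differing index $j$ and (say) $c_j > c_j'$, then $a_j \le (c_j-c_j')a_j = \sum_{i<j}(c_i'-c_i)a_i \le 2(a_1+\cdots+a_{j-1})$, which contradicts (C2) at index $j-1$ when $j\ge 2$, and is impossible when $j=1$ since then the right-hand sum is empty while $a_1>0$. Note also that (C2) is available precisely for $j\in\llb 2,n\rrb$, and $j\le k\le n$ ensures this.

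Your route, however, differs from the paper's. The paper works directly with the identity $a_I+a_J = a_{(I\cup J)\setminus(I\cap J)} + 2a_{I\cap J}$ and argues by an iterated comparison of maxima: setting $h=\max H$ and $\ell=\max(I\cup J)$, it rules out $h>\ell$, $h<\ell$, and $h=\ell\in I\cap J$ using (C2), concluding that $h=\ell\in(I\cup J)\setminus(I\cap J)$; it then cancels $a_h$ from both sides and repeats, eventually arriving at $a_{I\cap J}=0$, a contradiction. Your approach instead extracts the reusable observation that (C2) is exactly the ``doubly super-increasing'' condition guaranteeing uniqueness of $\{0,1,2\}$-coefficient representations, and then the whole lemma falls out in one line. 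This is arguably the more conceptual packaging: it names the invariant (C2) really buys you, and the same uniqueness statement implicitly drives the recursive step in the paper's argument (and also in the next lemma in the paper, Lemma 3.8, whose proof reuses the same maximum-stripping pattern). The trade-off is that the paper's in-place iteration stays stylistically uniform with the surrounding lemmas, whereas yours introduces an auxiliary statement; both rely only on (C2) and not on (C1).
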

\begin{proof}
One of the implications is trivial: if $I\cap J=\emptyset$, then $a_I+a_J=a_{I\cup J}$. Conversely, let $I\cap J\ne \emptyset$ and assume to the contrary that there exists $H\subseteq\llb 1,k\rrb$ such that
\begin{equation}\label{eq.1}
    a_H=a_I+a_J=a_{I\cup J}+a_{I\cap J}=a_{(I\cup J)\setminus (I\cap J)}+2a_{I\cap J}.
\end{equation}
Since $a_{I\cap J}>0$, in particular $H\ne\emptyset$. Let $h:=\max H$ and $\ell:=\max (I\cup J)$. Then equality \eqref{eq.1} and condition \ref{cond_2} imply $h=\ell$ and $h\in (I\cup J)\setminus (I\cap J)$. In fact, if $h>\ell$, then (using the fact that both $h$ and $\ell$ are at most $k$) we obtain $I,J\subseteq \llb 1,h-1\rrb$ and
    \[a_{H}\ge a_{\{h\}}>2 a_{\llb 1,h-1\rrb}\ge 2a_{(I\cup J)\setminus (I\cap J)}+2a_{I\cap J}\ge a_I+a_J;\]
if $\ell>h$, then $H\subseteq \llb 1,\ell-1\rrb$ and
    \[a_{H}< 2a_{\llb 1,\ell-1\rrb}<a_{\{\ell\}}\leq a_{I\cup J}<a_I+a_J;\]
if $h=\ell$ and $h\in I\cap J$, then
    \[a_{H}\leq a_{\{h\}}+a_{\llb 1,h-1\rrb}< 2a_{\{h\}}\leq 2a_{I\cap J} \leq a_I+a_J.\]
Now, since $h=\ell\in (I\cup J)\setminus (I\cap J)$, it follows that $((I\cup J)\setminus(I\cap J))\setminus\{h\}=((I\cup J)\setminus\{h\})\setminus(I\cap J)$; moreover, equality \eqref{eq.1} yields 
\[a_{H\setminus\{h\}}=a_{(I\cup J)\setminus\{h\}}+a_{I\cap J}=a_{((I\cup J)\setminus\{h\})\setminus(I\cap J)}+2 a_{I\cap J}.\]
If $H\setminus \{h\}=\emptyset$, we immediately get a contradiction. Hence, set $h':=\max (H\setminus \{h\})$ and $\ell':= \max((I\cup J)\setminus\{h\})$ (note that $(I\cup J)\setminus\{h\}\ne\emptyset$ since $I\cap J\ne \emptyset$). Arguing as before, we obtain $h'=\ell'\in ((I\cup J)\setminus\{h\})\setminus (I\cap J)$. Iterating this argument, we arrive at the following equation
\begin{equation}\label{eq.2}
a_x = a_{(I\cup J)\setminus(H\setminus\{x\})} + a_{I\cap J}, 
\end{equation}
where $x\in H$ and $x\in (I\cup J)\setminus (I\cap J)$. By subtracting $a_x$ from both sides of equation \eqref{eq.2}, we finally obtain $a_{I\cap J} = 0$, a contradiction.
\end{proof}

\begin{lemma}\label{lemma_2}
For a positive integer $n\ge 2$ and $I,J,H\subseteq \llb 1,n-1\rrb$, assume  that
\[a_{\llb 1,n-1\rrb}+a_H=a_{I}+a_J=a_{I\cup J}+a_{I\cap J}.\]
Then $I\cup J=\llb 1,n-1\rrb$ and $H=I\cap J$.
\end{lemma}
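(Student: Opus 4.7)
The plan is to reduce the equation to a single integer linear combination of the $a_i$'s that vanishes, and then run the same maximum-index argument that drove the proof of Lemma \ref{lemma_1}. Specifically, set $K := I\cup J$ and $L := I\cap J$, so that $a_I + a_J = a_K + a_L$ identically, and $L \subseteq K \subseteq \llb 1,n-1\rrb$. The hypothesis then reads $a_{\llb 1,n-1\rrb} + a_H = a_K + a_L$, and after moving everything to one side it becomes
\[
\sum_{i=1}^{n-1} c_i\, a_i \;=\; 0, \qquad c_i := 1 + \chi_H(i) - \chi_K(i) - \chi_L(i),
\]
where $\chi_S$ denotes the indicator function of $S$. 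Since $\chi_H, \chi_K, \chi_L$ are $\{0,1\}$-valued and $\chi_L \le \chi_K$, a direct case check gives $c_i \in \{-1,0,1,2\}$ for each $i$.

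The goal then reduces to showing that every $c_i$ vanishes. I would argue by contradiction: let $j$ be the largest index with $c_j \ne 0$. If $j \ge 2$, then
\[
a_j \;\le\; |c_j|\, a_j \;=\; \Bigl|\sum_{i<j} c_i\, a_i\Bigr| \;\le\; 2\, a_{\llb 1,j-1\rrb},
\]
which contradicts condition \textup{(C2)}. If instead $j = 1$, then the identity forces $c_1 a_1 = 0$ with $a_1 > 0$, hence $c_1 = 0$, again a contradiction. Thus $c_i = 0$ for every $i \in \llb 1,n-1\rrb$.

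To conclude, I would unpack $c_i = 0$ as $\chi_K(i) + \chi_L(i) = 1 + \chi_H(i)$. Together with $\chi_L(i) \le \chi_K(i)$, this equation forces $\chi_K(i) = 1$ and $\chi_L(i) = \chi_H(i)$ for each $i$. Translating back, $K = \llb 1,n-1\rrb$ and $L = H$, which is exactly the statement $I \cup J = \llb 1,n-1\rrb$ and $H = I \cap J$.

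The main obstacle is really the bookkeeping at the reformulation stage: one must verify the range of $c_i$ carefully, since the bound $L \subseteq K$ is what rules out the value $c_i = -2$ (and the fact that $c_i \ge -1$ is what makes the final unpacking step clean); once this is established, the growth argument based on (C2) runs essentially as in Lemma \ref{lemma_1}.
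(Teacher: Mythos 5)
Your proof is correct and takes a cleaner, more unified route than the paper's. The paper argues recursively: it first shows that $n-1 \in I\cup J$, then that $n-1 \in H$ if and only if $n-1 \in I\cap J$, then peels off the index $n-1$ from all three sets to reduce to the same statement with $n-2$ in place of $n-1$, iterating $n-1$ times with a small case analysis at each step. You instead translate the hypothesis into a single vanishing integer linear combination $\sum_i c_i a_i = 0$ with $c_i \in \{-1,0,1,2\}$ and kill all coefficients in one stroke by isolating the largest nonzero one and invoking (C2). Both proofs run on the same engine — the super-increasing property $a_j > 2a_{\llb 1,j-1\rrb}$ — so the core idea coincides, but your reformulation eliminates the repeated case-by-case bookkeeping and, to my mind, reads more transparently. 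One small slip in your commentary: the constraint $L \subseteq K$ is not what rules out $c_i = -2$; that value is already impossible since $\chi_K(i) + \chi_L(i) \le 2$ while $1 + \chi_H(i) \ge 1$. Where $L \subseteq K$ (equivalently $\chi_L \le \chi_K$) really earns its keep is in the final unpacking step, where it forces $\chi_K(i)=1$ from $\chi_K(i)+\chi_L(i)\ge 1$ — and since that is exactly the step you actually carry out, the substance of the argument is unaffected.
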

\begin{proof}
We first observe that $n-1\in I\cup J$. If $n=2$ this is trivial, otherwise, if $n>2$ and $I\cup J\subseteq \llb 1,n-2\rrb$, we get the contradiction
\[a_{\llb 1,n-1\rrb}+a_H\ge a_{n-1}>2a_{\llb 1,n-2\rrb}\ge a_I+a_J.\]
Moreover $n-1\in H$ if and only if $n-1\in I\cap J$. In fact, if $n-1\in H$ and $n-1\notin I\cap J$, then
\[a_{\llb 1,n-1\rrb}+a_H\ge 2a_{n-1}>a_{n-1}+2a_{\llb 1,n-2\rrb}\ge a_I+a_J,\]
while if $n-1\in I\cap J$ and $n-1\notin H$, then
\[a_{\llb 1,n-1\rrb}+a_H\le a_{n-1}+2a_{\llb 1,n-2\rrb}<2 a_{n-1}\le a_I+a_J,\]
which is a contradiction in both cases. Thus, $a_{\llb 1,n-1\rrb}+a_H=a_{I}+a_J=a_{I\cup J}+a_{I\cap J}$ implies that $a_{\llb 1,n-2\rrb}+a_{H\setminus\{n-1\}}=a_{I\setminus\{n-1\}}+a_{J\setminus\{n-1\}}=a_{(I\cup J)\setminus\{n-1\}}+a_{(I\cap J)\setminus\{n-1\}}$. Arguing recursively as in the previous lemma, we prove the statement after $n-1$ steps.
\end{proof}

\begin{lemma}\label{lemma_3}
Let $n\ge 2$. The following statements hold:

\vspace{.2cm}
\begin{enumerate*}[label=\textup{(\roman{*})}, resume, mode=unboxed]
\item\label{lemma_3(1)} For every $I, J\subseteq \llb 1, n-1 \rrb$, $a_I + a_J \in A_n$ if and only if $I\cap J = \emptyset$.
\end{enumerate*}

\vspace{.2cm}
\begin{enumerate*}[label=\textup{(\roman{*})}, resume, mode=unboxed]
\item\label{lemma_3(2)} For every $I\in\mathcal{P}(\llb 1,n-1\rrb)$ and $J\in \mathcal{P}( \llb 1, n-1 \rrb) \cup \{ Y \cup \{n+1\} \colon Y\in \mathcal{P}( \llb 1, n-1 \rrb)\}$ (note that $\cup$ is a disjoint union), $a_I + a_J \in B_n$ if and only if $I\cap J = \emptyset$. Moreover, $a_{\llb 1, n \rrb} + a_I \in B_n$ if and only if $I = \emptyset$.
\end{enumerate*}

\vspace{.2cm}
\begin{enumerate*}[label=\textup{(\roman{*})}, resume, mode=unboxed]
\item\label{lemma_3(3)} For every $I, J\subseteq \llb 1, n+1 \rrb$, $a_I + a_J \in C_n$ if and only if either $I\cap J = \emptyset$ or $I\cup J=\llb 1,n\rrb$ and $n\in I\cap J$.
\end{enumerate*}
\end{lemma}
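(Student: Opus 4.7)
The plan is to deduce all three items from the size separation imposed by (C1)--(C2), combined with the two preceding lemmas. Item (i) is immediate from Lemma \ref{lemma_1} specialized to $k = n-1$, since $A_n$ is exactly the image of $\mathcal{P}(\llb 1, n-1 \rrb)$ under $I \mapsto a_I$.

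For item (ii), I would first observe that (C1)--(C2) force the three ``blocks'' of $B_n$ to live in disjoint intervals of $\mathbb N$: $\max A_n = a_{\llb 1, n-1 \rrb} < a_{\llb 1, n \rrb} < a_{n+1} = \min (A_n + a_{n+1})$. I then split on the form of $J$. If $J \subseteq \llb 1, n-1 \rrb$, then $a_I + a_J \le 2 a_{\llb 1, n-1 \rrb}$ is too small to hit $\{a_{\llb 1, n \rrb}\}$ or $A_n + a_{n+1}$, so membership in $B_n$ reduces to membership in $A_n$, and item (i) gives $I \cap J = \emptyset$. If $J = Y \cup \{n+1\}$, then $a_I + a_J = a_{n+1} + (a_I + a_Y) \ge a_{n+1}$ is too large to lie in $A_n \cup \{a_{\llb 1, n \rrb}\}$, so membership in $B_n$ forces $a_I + a_Y \in A_n$, and item (i) (applied to $I, Y \subseteq \llb 1, n-1 \rrb$, together with $n+1 \notin I$) again yields $I \cap J = \emptyset$. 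The final assertion about $a_{\llb 1, n \rrb} + a_I$ is analogous: this sum sits strictly between $\max A_n$ and $a_{n+1}$, so the only way it lands in $B_n$ is to equal $a_{\llb 1, n \rrb}$ itself, which forces $a_I = 0$, i.e.\ $I = \emptyset$.

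Part (iii) requires a case analysis on whether $n+1$ belongs to $I$, $J$, or neither. The case $n+1 \in I \cap J$ is vacuous, since $a_I + a_J \ge 2 a_{n+1}$ while (C1) yields $\max C_n = a_{\llb 1, n+1 \rrb} = a_{\llb 1, n \rrb} + a_{n+1} < 2 a_{n+1}$. If $n+1$ lies in exactly one of $I, J$ (WLOG $I$), then $a_I + a_J \ge a_{n+1}$ forces $n+1 \in K$ for any $K \subseteq \llb 1, n+1 \rrb$ with $a_K = a_I + a_J$; cancelling $a_{n+1}$ reduces to Lemma \ref{lemma_1} at level $k = n$ and yields $I \cap J = \emptyset$. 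The remaining (and most delicate) case is $n+1 \notin I \cup J$: if additionally $n+1 \notin K$, Lemma \ref{lemma_1} at level $k = n$ gives $I \cap J = \emptyset$; if $n+1 \in K$, a short comparison using (C1)--(C2) shows that $n$ must belong to $I \cap J$ (otherwise $a_I + a_J \le a_n + 2a_{\llb 1, n-1 \rrb} < a_{n+1}$), and after cancelling $2 a_n$ one obtains
\[a_{I \setminus \{n\}} + a_{J \setminus \{n\}} = a_{\llb 1, n-1 \rrb} + a_{K \setminus \{n+1\}},\]
which (after remarking that the residual set $K\setminus\{n+1\}$ must lie in $\llb 1, n-1 \rrb$ by size) is precisely the hypothesis of Lemma \ref{lemma_2}, delivering $I \cup J = \llb 1, n \rrb$ and therefore the second disjunct in (iii). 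The converse directions are routine: when $I \cap J = \emptyset$ one has $a_I + a_J = a_{I \cup J}$, and when $I \cup J = \llb 1, n \rrb$ with $n \in I \cap J$, the identity $a_{\llb 1, n \rrb} + a_n = a_{n+1}$ from (C1) rewrites $a_I + a_J = a_{\{n+1\} \cup ((I \cap J) \setminus \{n\})} \in C_n$.

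The main obstacle is the last case of item (iii): one must correctly recognize that $n$ is forced into $I \cap J$ using the quantitative strength of (C2), and then algebraically massage the resulting equation into exactly the form consumed by Lemma \ref{lemma_2}. Everything else is essentially bookkeeping driven by the strict size inequalities of (C1)--(C2).
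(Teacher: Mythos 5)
Your proposal is correct and follows essentially the same route as the paper's proof: item (i) is Lemma \ref{lemma_1} at $k=n-1$, item (ii) splits on the block of $B_n$ via the size separation from (C1)--(C2), and item (iii) uses the same size comparisons to reduce either to Lemma \ref{lemma_1} at $k=n$ or, in the one delicate sub-case, to Lemma \ref{lemma_2}. The only difference is cosmetic: for (iii) you organize cases by whether $n+1$ belongs to $I$, $J$, $K$, while the paper compares $\max H$ with $\max(I\cup J)$, but the cases and estimates are in one-to-one correspondence.
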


\begin{proof}
\ref{lemma_3(1)} The statement follows from Lemma \ref{lemma_1} with $k=n-1$. In fact, $a_I+a_J\in A_n$ if and only if there is $H\subseteq\llb 1,n-1\rrb$ such that $a_H=a_I+a_J$.
  
  \vspace{.2cm}  
\ref{lemma_3(2)} We first assume $I\cap J=\emptyset$. If $J\in\mathcal{P}( \llb 1, n-1 \rrb)$, then $a_I+a_J\in A_n\subseteq B_n$ by \ref{lemma_3(1)}. If $J=X\cup\{n+1\}$ for some $X\in \mathcal{P}(\llb 1, n-1\rrb)$, then $I\cap X=\emptyset$ and, by \ref{lemma_3(1)}, $a_I+a_X\in A_n$. Thus $a_I+a_J=a_I+a_X+a_{n+1}\in A_n+a_{n+1}\subseteq B_n$.
    
    On the other hand, let $I\subseteq\llb 1,n-1\rrb$, $J\in \mathcal{P}( \llb 1, n-1 \rrb) \cup \{ Y \cup \{n+1\} \colon Y\in \mathcal{P}( \llb 1, n-1 \rrb)\}$ and assume $a_I+a_J\in B_n$. Then $a_I+a_J=a_{H}$, where either $H\subseteq\llb 1, n-1\rrb$, $H=\llb1,n\rrb$ or $H=X\cup\{n+1\}$ with $X\in\mathcal{P}(\llb 1,n-1\rrb)$. Let us first observe that $H\ne\llb1,n\rrb$. Indeed, if we assume that $H = \llb1,n\rrb$, if $n+1 \in J$, then $a_I + a_J \geq a_{n+1} > a_H$, while if $n+1 \notin J$ (so $I, J \subseteq \llb1,n-1\rrb$), $a_I + a_J \leq 2a_{\llb1,n-1\rrb} < a_n \leq a_H$.
    
    If $H\in \mathcal{P}(\llb 1, n-1\rrb)$, then $J\in \mathcal{P}(\llb 1, n-1\rrb)$ and $a_H\in A_n$, hence $I\cap J=\emptyset$, by \ref{lemma_3(1)}. If $H=X\cup\{n+1\}$ for some $X\in \mathcal{P}(\llb 1, n-1\rrb)$, then $J=Y\cup\{n+1\}$ for some $Y\in \mathcal{P}(\llb 1, n-1\rrb)$ and $a_I+a_Y=a_X$. Note that $a_X\in A_n$, so, by \ref{lemma_3(1)}, $I\cap Y=\emptyset$ and hence $I\cap J=\emptyset$.
    
    Finally, we prove that $a_{\llb 1,n\rrb}+a_I\in B_n$ only if $I=\emptyset$ (the reverse implication is trivial). Assume $a_{\llb 1,n\rrb}+a_I=a_H$, where either $H\subseteq\llb 1, n-1\rrb$, $H=\llb1,n\rrb$ or $H=X\cup\{n+1\}$ with $X\subseteq\llb 1, n-1\rrb$. First, observe that $H$ cannot contain $n+1$, otherwise $a_H\ge a_{n+1}=a_{\llb 1,n\rrb}+a_n> a_{\llb 1,n\rrb}+2a_{\llb 1, n-1\rrb}\ge a_{\llb 1,n\rrb}+a_I$, so we are left to check only the first two of the above conditions. If $H\subseteq\llb 1, n-1\rrb$, Lemma \ref{lemma_1} implies $\llb1,n\rrb\cap I=\emptyset$ and this can happen only if $I=\emptyset$. If $H=\llb1,n\rrb$, then $a_I=0$ and $I = \emptyset$.
   
   \vspace{.2cm} 
\ref{lemma_3(3)} First, if $I\cap J=\emptyset$, then $a_I+a_J=a_{I\cup J}\in C_n$, while if $I\cup J=\llb 1, n\rrb$ and $n\in I\cap J$, then $a_I+a_J=a_{I\cup J}+a_{I\cap J}=a_{\llb 1,n\rrb}+a_n+a_{((I\cap J)\setminus \{n\})}=a_{((I\cap J)\setminus \{n\})\cup\{n+1\}}\in C_n$.
    
    On the other hand, assume $a_I+a_J\in C_n$, i.e., there exists $H\subseteq\llb 1,n+1\rrb$ such that 
    \begin{equation}\label{eq. C}
        a_H=a_I+a_J=a_{I\cup J}+a_{I\cap J}=a_{(I\cup J)\setminus (I\cap J)}+2a_{I\cap J}.
    \end{equation}
    If $H=\emptyset$, then $I, J=\emptyset$ and $I\cup J=\emptyset$. Assume then $H\ne\emptyset$, implying $I\cup J\ne \emptyset$. Set $h:=\max H$ and $h':=\max I\cup J$. We first observe that it must be $h\ge h'$. In fact, if $h< h'$ and $h'\le n$, then $a_I+a_J\ge a_{h'}>2 a_{\llb 1,h\rrb}\ge a_H$, while if $h'=n+1$ (and so $h\le n$), then $a_I+a_J\ge a_{n+1}=a_{\llb 1,n\rrb}+a_n>a_{\llb 1,n\rrb}\ge a_H$. Both cases contradict the assumption. 
  
   If $h=h'$, we distinguish two sub-cases. If $h=h'\le n$, then $I,J,H\subseteq\llb 1, n\rrb$ and we get from Lemma \ref{lemma_1} (with $k=n$) and \eqref{eq. C} that $I\cap J=\emptyset$. If $h=h'=n+1$, then $n+1\notin I\cap J$ (otherwise, $a_I+a_J=a_{I\cup J}+a_{I\cap J}\ge 2a_{n+1}>a_H$). Thus, \eqref{eq. C} becomes $a_{H\setminus\{n+1\}}=a_{(I\cup J)\setminus\{n+1\}}+a_{I\cap J}$ and we get from Lemma \ref{lemma_1} with $k=n$, that $(I\cup J)\setminus\{n+1\}$ and $I\cap J$ have no elements in common, i.e., $I\cap J=\emptyset$.
      
   It remains to consider the case $h>h'$. This can occur only if $h=n+1$ and $h'=n$. In fact, if $h=n+1$ and $h'\le n-1$, then $a_H\ge a_{n+1}>a_{n}>2a_{\llb 1,n-1\rrb}\ge a_I+a_J$, while if $h\le n$, then $a_H\ge a_{h}>2a_{\llb 1,h-1\rrb}\ge a_I+a_J$. Moreover, it must be $n\in I\cap J$, otherwise we get the following contradiction:
   $a_H\ge a_{n+1}>2a_n>a_n+2a_{\llb 1, n-1\rrb}\ge a_I+a_J$. In addition, we have $n\notin H$, otherwise $a_H\ge a_n+a_{n+1}=2a_n+a_{\llb 1,n\rrb}>2a_n+3a_{\llb 1,n-1\rrb}\ge 2a_{\llb 1,n\rrb}\ge a_I+a_J$, which is again absurd. Finally, summing up all the observations, we have $\max H=n+1$, $\max I\cup J=n$, $H\setminus \{n+1\}\subseteq \llb 1,n-1\rrb$, $I\cup J\subseteq \llb 1,n\rrb$ and $n\in I\cap J$. Then \eqref{eq. C} becomes $a_H=a_{H\setminus\{n+1\}}+a_{n+1}=a_{H\setminus\{n+1\}}+a_{\llb 1,n-1\rrb}+2a_n=a_{I\setminus \{n\}}+a_{J \setminus \{n\}}+2a_n=a_I+a_J$, and we get 
   \[a_{H\setminus\{n+1\}}+a_{\llb 1,n-1\rrb}=a_{I\setminus \{n\}}+a_{J \setminus \{n\}}.\]
   It follows from Lemma \ref{lemma_2} that $(I \setminus \{n\})\cup (J\setminus \{n\})=\llb 1,n-1\rrb$, thus $I\cup J=\llb 1,n\rrb$ and we already know that $n\in I\cap J$. All possible cases have been examined.
\end{proof}

We are finally ready to prove Theorem \ref{thm: I_B atom}.

\begin{proof}[Proof of Theorem \ref{thm: I_B atom}] \ref{thm: I_B atom(1)} As $\I(R)$ is unit-cancellative and reduced, to prove that $I_{B_n}$ is an atom of $\I(R)$ it suffices to show that there cannot be $\mathfrak{a},\mathfrak{b}\in \I(R)$ such that $I_{B_n} \subsetneq \mathfrak{a}, \mathfrak{b} \subsetneq R$, and $I_{B_n} = \mathfrak{a} \cdot \mathfrak{b}$.
Let us assume to the contrary that 
such $\mathfrak a$ and $\mathfrak b$ exist.
Set $\mdeg(\mathfrak{a}) = d$ and $\mdeg(\mathfrak{b}) = e$. Then, by Lemma \ref{lem: standard}, we have $d\geq 1$, $e \geq 1$, $d+e = m=\max(B_n)$,
	\begin{equation}\label{eq:atom1}
	(I_{B_n})_K=\spa_K\{X^{m-b}Y^b \colon b\in B_n\} = \mathfrak{a}_K[d] \cdot \mathfrak{b}_K[e],
	\end{equation}
Furthermore, there exist $r\ge 2$ linearly independent elements $f_1, \dots, f_r\in \mathfrak{a}_K[d]$ such that $X^d=\ini(f_1) > \dots > \ini(f_r)$ and $\mathfrak{a}_K[d] = \spa_K\{f_1,\ldots, f_r\}$, and $s\ge 2$ linearly independent elements $g_1, \dots, g_s\in \mathfrak{b}_K[e]$ such that $X^e=\ini(g_1) > \dots > \ini(g_s)$ and $\mathfrak{b}_K[e] = \spa_K\{g_1,\ldots, g_s\}$, whence for each $i\in\llb 2,r\rrb$ there is some $\alpha_i\in\llb 1,d\rrb$ such that $\ini(f_{i}) = X^{d-\alpha_i}Y^{\alpha_i}$ and for each $j\in\llb 2,s\rrb$ there is some $\beta_j\in\llb 1,e\rrb$ such that $\ini(g_{j}) = X^{e-\beta_j}Y^{\beta_j}$.
    
By symmetry, we can assume without loss of generality that $d \leq m / 2 = a_{\llb 1, n \rrb}$ (see Remark \ref{rem:m/2}). Moreover, $d\ge 1$ implies $e<m=a_{\llb 1,n-1\rrb\cup\{n+1\}}$.

Set $\mathcal{P} := \mathcal{P}(\llb 1,n-1\rrb)$, $\mathcal{P}^+ := \mathcal{P}\setminus\{\emptyset\}$, $\mathcal{Q}:= \mathcal{P} \cup \{ Y \cup \{n+1\} \colon Y\in \mathcal{P}\}$ and $\mathcal{Q}^+ := \mathcal{Q}\setminus\{\emptyset\}$. For every fixed $i\in\llb 2,r\rrb $, $f_ig_1$ belongs to $(I_{B_n})_K[m]$, whence also $\ini(f_ig_1)=X^{m-\alpha_i}Y^{\alpha_i}=\in(I_{B_n})_K[m]$. Since $1\leq \alpha_i\leq d\leq a_{\llb 1,n\rrb}$, by \eqref{eq:atom1} it follows that $\alpha_i\in B_n\cap \llb 1, a_{\llb 1, n \rrb}\rrb=A_n\cup \{a_{\llb 1, n \rrb}\}\setminus{\{0\}}$, so we can write $\alpha_i = a_I$ for some $I\in \mathcal{P}^+\cup \{ \llb1,n\rrb \}$. Similarly, by looking at $\ini(f_1g_j)$ with $j\in \llb 2,s\rrb $, we conclude that $\beta_j\in B_n\setminus{\{0, m\}}$, so we can write $\beta_j = a_J$ for some $J\in \mathcal{Q}^+\cup \{ \llb1,n\rrb \}\setminus{\{\llb1,n-1\rrb\cup \{n+1\}\}}$. Thus, $\ini(f_ig_j) = X^{m-(a_I+a_J)}Y^{a_I+a_J}$ and by \eqref{eq:atom1} we conclude $a_I+a_J \in B_n\setminus{\{0\}}$.

Assume that for some $i\in\llb2,r\rrb$,  $\ini(f_{i}) = Y^{a_{ \llb1,n\rrb }}$, i.e. $I=\llb 1,n\rrb$. This can only happen if $d =e= m/2 = a_{ \llb1,n\rrb }$. For some $j\in \llb2,s\rrb$, consider $\ini(g_j)=X^{e-a_J}Y^{a_J}$, with $a_J\ge e$. If $a_J < e= a_{ \llb1,n\rrb }$, then $J \in \mathcal{P}^+$ and, by Lemma \ref{lemma_3}\ref{lemma_3(2)}, $a_J + a_{ \llb1,n\rrb } \notin B_n$, implying $f_ig_j\notin (I_{B_n})_K[m]$. Hence, the only possibility is $a_J = e$, so we must have $\dim \mathfrak{b}_K[e] = 2$. An analogous argument shows that also $\dim \mathfrak{a}_K[d] = 2$, and so $\dim( \mathfrak{a}_K[d]\cdot \mathfrak{b}_K[e] )\leq 4$, contradicting \eqref{eq:atom1}. In fact $\dim(I_{B_n})_K[m]=|B_n|=2^n+1\ge5$.

On the other hand, assume that for some $j\in\llb2,s\rrb$,  $\ini(g_j) = X^{e-a_{ \llb1,n\rrb }}Y^{a_{ \llb1,n\rrb }}$, i.e. $J=\llb 1,n\rrb$. For any $I\in \mathcal{P}^+\cup\{\llb1,n\rrb\}$ we have $a_I + a_{ \llb1,n\rrb } \notin B_n$ by Lemma \ref{lemma_3}\ref{lemma_3(2)}, and thus $\ini(f_{i}g_{j_0})\notin (I_{B_n})_K[m]$ for every $i\in\llb2,r\rrb$. It follows that $\dim \mathfrak{a}_K[d] = r = 1$, but this is impossible.

Therefore, we can restrict our attention to those $\alpha_i=a_I$ and $\beta_j=a_J$ with $I\in \mathcal{P}^+$ and $ J\in \mathcal{Q}^+\setminus{\{\llb1,n-1\rrb\cup \{n+1\}\}}$. Furthermore, of all such possible choices of $I$ and $J$, by Lemma~\ref{lemma_3}\ref{lemma_3(2)}, we need only consider those for which $I\cap J = \emptyset$.

Since $\ini(f_{i_1})\ne \ini(f_{i_2})$ whenever $i_1\ne i_2\in \llb 1,d\rrb$, and $|\mathcal{P}|=2^{n-1}$, we obtain  $r=\dim \mathfrak{a}_K[d]\le 2^{n-1}$. Assume that $r \in \llb 2^k+1, 2^{k+1}\rrb$, where $k\in \llb0, n-2\rrb$. To each $f_i$, where $i\in\llb 1,r\rrb$, corresponds a set $I_i\in \mathcal{P}$, and $I_{i_1}\ne I_{i_2}$ for each $i_1\ne i_2$. Since $r\ge 2^{k}+1$, the union of all the $I_i$'s has cardinality at least equal to $k+1$, since a set of cardinality at most $k$ has no more than $2^k$ subsets. Thus, the number of sets $J \in \mathcal{P}$ such that $J\cap I_i=\emptyset$ for every $i$, is at most $2^{n-k-2}$ ($J$ is actually a subset of $\llb 1,n-1\rrb \setminus \bigcup I_i$). Similarly, also the number of admissible sets $J \in \mathcal{Q}\setminus \mathcal{P}$ is at most $2^{n-k-2}$. In total, the number of possibilities for $J \in \mathcal{Q}$ is at most $2^{n-k-1}$. Hence $\dim \mathfrak{b}_K[e] \leq 2^{n-k-1}$ and consequently $\dim(\mathfrak{a}_K[d]\cdot \mathfrak{b}_K[e]) \leq 2^n$, a contradiction to \eqref{eq:atom1}.

\vspace{.2cm}
\ref{thm: I_B atom(2)} $\{2, n+1\} \subseteq \mathsf{L}_{\I(R)}(I_{C_n})$ follows by Remark \ref{rem: I_C as product of b_i's}, the previous point \ref{thm: I_B atom(1)} and the fact that all ideals of the form $\mathfrak b_i$ are atoms of $\I(R)$ (by \cite[Prop.~5.10.1]{Ge-Kh21}).

For the proof of $\mathsf{L}_{\I(R)}(I_{C_n}) \subseteq \llb 2, n+1 \rrb$, we assume to the contrary that the following identity holds
\[
I_{C_n} = \prod_{i=1}^{n+2}I_i,
\]
where $I_i\in \I(R)\setminus\{R\}$ with $\mdeg(I_i) = d_i$ for every $i \in \llb 1, n+2\rrb$. We have $\sum _{i=1}^{n+2}d_i=a_{\llb 1, n+1\rrb}$ by \eqref{eq: additivity min-deg}. Thus,
\[
(I_{C_n})_K[\mu] = \spa_K\{X^{\mu-c}Y^c \colon c\in C_n\}= \prod_{i=1}^{n+2}(I_{i})_{K}[d_i],
\]
where $\mu = a_{\llb1, n+1 \rrb}$. As in the previous point, we may assume that \[
d_i \geq 1 \text{ and } \dim( I_{i})_K[d_i] \geq 2 \text{ for all } i \in \llb1, n+2\rrb.
\]
Moreover, let
\[
(I_{i})_K[d_i] = \spa\{f_1^{(i)},\ldots, f_{r_i}^{(i)}\},
\]
where $f_1^{(i)},\ldots, f_{r_i}^{(i)}$ are linearly independent, $\ini(f_1^{(i)}) > \ini(f_2^{(i)}) > \ldots > \ini(f_{r_i}^{(i)})$ and $\ini(f_1^{(i)}) = X^{d_i}$, for all $i \in \llb 1, n+2 \rrb$. For any $j\in\llb2,r_i\rrb$, let $\ini(f_j^{(i)}) = X^{d_i - a_{ij}}Y^{a_{ij}}$ for some $1\leq a_{ij} \leq d_i$. Then the following equation
\[
\ini\left(f_j^{(i)}\cdot \prod_{k=1, k\ne i}^{n+2}f_1^k\right) = X^{\mu - a_{ij}} Y^{a_{ij}}
\]
shows that $a_{ij} \in C_n$ and hence $a_{ij} = a_{I_j^{(i)}}$ with $\emptyset \neq I_j^{(i)} \subseteq \llb 1, n+1 \rrb$ for every $i\in\llb1,n+2\rrb$ and $j\in \llb 2,r_i\rrb$. Similarly, $\sum_{\ell = 1}^ka_{I_{j_{\ell}}^{(i_{\ell})}} \in C_n$ for every $k\leq n+2$ and $i_s \ne i_t$ for every $s\ne t$. We claim that $n\in I_2^{(i)}$ for all $i \in \llb 1, n+2\rrb$ and that $I_2^{(i)} \subseteq \llb 1, n\rrb$ for all $i \in \llb 1, n+2\rrb$.
If $n \not \in I_2^{(i_0)}$ for some $i_0$, since $a_{I_2^{(i_0)}}+a_{I_2^{(i)}}\in C_n$ for every $i\ne i_0$, by Lemma \ref{lemma_3}\ref{lemma_3(3)} we get
\[
I_2^{(i_0)} \cap I_2^{(i)} = \emptyset \text{ for all } i \ne i_0.
\]
But then $|\cup_{i=1}^{n+2}I_2^{(i)}| \geq n+2$ (recall that $I_2^{(i)}\ne\emptyset$ for all $i$), contradicting the fact that $\cup_{i=1}^{n+2}I_2^{(i)} \subseteq \llb 1, n+1\rrb$. If we assume that $n+1 \in I_2^{(i_0)}$, the same argument leads to another contradiction (note that $I_2^{(i_0)} \cup I_2^{(i)} \ne  \llb 1, n\rrb$ for all $i \ne i_0$).

Therefore, using Lemma \ref{lemma_3}\ref{lemma_3(3)} again, we conclude
\[
I_2^{(i)} \cup I_2^{(j)} =  \llb 1, n\rrb \text{ for all } i\ne j.
\]
Recall that $n \geq 2$ and $\sum_{i = 1}^{n+2}a_{I_2^{(i)}} \in C_n$.
Then
\[
\sum_{i = 1}^{n+2}a_{I_2^{(i)}} \geq \sum_{i = 1}^{4}a_{I_2^{(i)}} = a_{I_2^{(1)}\cup I_2^{(2)}} + a_{I_2^{(1)}\cap I_2^{(2)}} + a_{I_2^{(3)}\cup I_2^{(4)}} + a_{I_2^{(3)}\cap I_2^{(4)}},
\]
which is equal to
\[
a_{ \llb 1, n\rrb} + a_{I_2^{(1)}\cap I_2^{(2)}} + a_{ \llb 1, n\rrb} + a_{I_2^{(3)}\cap I_2^{(4)}} \geq 2a_{ \llb 1, n\rrb} + 2a_n = a_{ \llb 1, n+1\rrb} + a_n > \mu,
\]
contradicting $\sum_{i = 1}^{n+2}a_{I_2^{(i)}} \in C_n$.
\end{proof}

In \cite[Prop.~4.10]{Fan-Tr18}, the authors show that $B_n$ is an atom in $\mathcal{P}_{\text{fin},0}(\mathbb{N})$ and that $\mathsf{L}_{\mathcal{P}_{\text{fin},0}(\mathbb{N})}(C_n)=\{2,n+1\}$. In light of theorem \ref{thm: I_B atom}, it makes sense to ask whether, in analogy with the results in \cite{Fan-Tr18}, also $\mathsf{L}_{\I(R)}(I_{C_n})=\{2,n+1\}$; we note that this would immediately imply $\Delta(\I(R))=\mathbb N^+$. The question remains open, but in the next section we will prove that in the case $D=K$ the answer is negative, if we consider the analogous problem in the submonoid $\Mon(R)\subseteq\I(R)$ of nonzero monomial ideals in $R$.

\section{Arithmetical properties of $\Mon(R)$}\label{sec: atoms of Mon(R)}
In this section we focus on the arithmetical properties of the monoid $\Mon(R)$ of nonzero monomial ideals in $R=K[X_1,\dots,X_N]$, with $K$ an arbitrary field and $N\ge 2$.
We start by recalling a basic fact on monomial ideals which we will repeatedly use in the following discussion. We refer to \cite[Cor.~1.8]{En-Her} for a proof.

\begin{lemma}\label{lem: mon_gen}
Let $I\in\Mon(R)$, and $\mathcal G$ a set of monomials in $I$. Then $\mathcal G$ generates $I$ if and only if for each monomial $v\in I$ there exists $u\in\mathcal G$ such that $u\mid v$.
\end{lemma}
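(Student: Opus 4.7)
The plan is to prove both implications by reducing to monomial supports, exploiting the fact that distinct monomials form a $K$-basis of $R$. For the forward implication, I would pick a monomial $v\in I$ and use the assumption that $\mathcal{M}$ generates $I$ to write $v=\sum_{i} r_i u_i$ with $r_i\in R$ and $u_i\in\mathcal{M}$. Expanding each $r_i=\sum_{j} c_{ij}m_{ij}$ as a $K$-linear combination of monomials gives $v=\sum_{i,j}c_{ij}(m_{ij}u_i)$. Collecting like terms on the right, every surviving monomial is of the form $m_{ij}u_i$ for some choice of indices. Since $v$ is itself a single monomial with coefficient $1$, comparing coefficients in the monomial basis forces $v=m_{ij}u_i$ for some $i,j$, so $u_i\mid v$ with $u_i\in\mathcal{M}$.

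For the reverse implication, set $J:=\langle\mathcal{M}\rangle$. The inclusion $J\subseteq I$ is immediate from $\mathcal{M}\subseteq I$. For the opposite inclusion I would invoke the basic structural fact about monomial ideals: if $I$ is generated by monomials and $f\in I$, then every monomial appearing with nonzero coefficient in $f$ already lies in $I$. (The proof is the same monomial-expansion argument as above, applied to a fixed set of monomial generators of $I$.) Consequently, $I$ is spanned as a $K$-vector space by its monomials, and it suffices to show that every monomial $v\in I$ lies in $J$. By hypothesis, some $u\in\mathcal{M}$ divides $v$, whence $v=(v/u)\cdot u\in J$.

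The only real point of care is the passage ``a monomial equal to a $K$-linear combination of monomials must coincide with one of them''. Over a field $K$, this is the $K$-linear independence of monomials in $R=K[X_1,\dots,X_N]$, and once this is invoked cleanly both directions of the lemma reduce to bookkeeping on supports. The main (mild) obstacle is to handle possible cancellations when collecting the terms $m_{ij}u_i$: one should first reduce to the surviving coefficient of $v$, rather than pairing $v$ with a particular summand before cancellation.
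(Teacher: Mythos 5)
Your proof is correct. Note that the paper does not supply its own argument for this lemma but simply cites \cite[Cor.~1.8]{En-Her}; the argument you give, reducing both implications to the $K$-linear independence of distinct monomials and a comparison of monomial supports (with the careful point that one must compare the coefficient of $v$ after collecting like terms rather than match $v$ to a particular summand before cancellation), is exactly the standard textbook proof of that corollary, so there is no genuine divergence to report.
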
 

\begin{remark}\label{rem: mon min gen}
If $\mathcal G$ is a set of monomials, denote by $\Min\mathcal G$ the subset consisting of those monomials that are minimal with respect to divisibility among the elements of $\mathcal G$.  An important consequence of Lemma \ref{lem: mon_gen} is that every monomial ideal $I$ admits a unique generating set of monomials that is minimal with respect to inclusion \cite[Prop.~1.11]{En-Her}. 
More precisely, if $\mathcal G$ is any set of monomials such that $I=\langle\mathcal G\rangle$, then $\Min\mathcal G$ is the minimal generating set of $I$; we denote it by $G(I)$. As a consequence of Dickson's Lemma \cite[Theorem 1.9]{En-Her}, $G(I)$ is finite for every $I\in\Mon(R)$.
\end{remark} 

The following corollary of Lemma \ref{lem: mon_gen} will be used throughout the section: in essence, it tells us that when an ideal generated only by monomials in $X$ and $Y$ factors into a product of monomial ideals, the factors can also be taken to have only monomials in $X$ and $Y$ as generators. In preparation for its proof, we isolate the following special case of Remark \ref{rem: mon min gen}: if $\mathfrak a,\mathfrak b\in\Mon{(R)}$, then $G(\mathfrak a\cdot\mathfrak b) = {\Min\{fg\colon f\in G(\mathfrak a),\ g\in G(\mathfrak b)\}}$.

\begin{corollary}\label{cor: wlog mon factors}
    Suppose $I=\mathfrak a\cdot\mathfrak b$ for some $I, \mathfrak a, \mathfrak b\in\Mon(R)$ and $G(I)\subseteq K[X,Y]$. Let $\mathfrak a'$ and $\mathfrak b'$ be the ideals generated, respectively, by the sets of monomials $G(\mathfrak a)\cap K[X,Y]$ and $G(\mathfrak b)\cap K[X,Y]$. Then $I=\mathfrak a'\cdot\mathfrak b'$.
\end{corollary}
\begin{proof}
    Suppose that there exists $u\in G(\mathfrak a)\setminus K[X,Y]$. It follows that $X_k$ divides $u$ for some $k\in\llb 3,N\rrb$, whence $X_k$ also divides every monomial in $\{uv\colon v\in G(\mathfrak b)\}$. Since the monomials in $G(\mathfrak a\cdot\mathfrak b)=G(I)$ are not divisible by $X_k$, we have $uv\not\in\Min\{fg\colon f\in G(\mathfrak a),\ g\in G(\mathfrak b)\}$ for every $v\in G(\mathfrak b)$, whence
    \begin{align*}
        I &= \mathfrak a\cdot\mathfrak b \\
        &= \langle G(\mathfrak a\cdot\mathfrak b)\rangle \\
        &= \langle\Min\{fg\colon f\in G(\mathfrak a),\ g\in G(\mathfrak b)\}\rangle \\
        &= \langle\Min\{fg\colon f\in G(\mathfrak a)\setminus\{u\},\ g\in G(\mathfrak b)\}\rangle \\
        &=\langle G(\mathfrak a)\setminus\{u\} \rangle\cdot \langle G(\mathfrak b)\rangle\text{.}
    \end{align*}
    Repeating this argument for every $u\in G(\mathfrak a)\setminus K[X,Y]$ and every $v\in G(\mathfrak b)\setminus K[X,Y]$ yields the desired conclusion (note that $G(\mathfrak a)$ and $G(\mathfrak b)$ are finite sets, by Remark \ref{rem: mon min gen}).
\end{proof}

\begin{remark}\label{rem: mdeg0 = R}
    If $I\in\Mon(R)$ and $\mdeg(I)=0$, then $I=R$. Indeed, $I$ contains an element of min-degree zero, i.e., a polynomial $f\in R$ with $f(0)\ne 0$; since $I$ is generated by monomials and $f\in I$, necessarily $f(0)\in I$, whence $1\in I$.
\end{remark} 

\begin{example}\label{c4 atom Mon}
As a first application of the lemma, we show that the ideal
\[\mathfrak{c}_4 = I_{\{0,1,2,4\}} = \langle X^4,X^3Y,X^2Y^2,Y^4\rangle\]
is an atom of $\Mon(R)$, while back in Section \ref{sec: preliminaries} we recalled that it is not an atom of $\I(R)$ (when the characteristic of $K$ is not $2$), despite $\{0,1,2,4\}$ being an atom of $\mathcal{P}_{\rm fin}(\mathbb{N})$. Suppose that $\mathfrak{c}_4=\mathfrak{a}\cdot\mathfrak{b}$ for some $\mathfrak{a},\mathfrak{b}\in\Mon(R)\setminus\{R\}$, with $\mdeg(\mathfrak{a})=d$ and $\mdeg(\mathfrak{b})=e$. By Corollary \ref{cor: wlog mon factors}, we can assume that all the generators of $\mathfrak a$ and $\mathfrak b$ are monomials in $X$ and $Y$. By equation (\ref{eq: additivity min-deg}) and Remark \ref{rem: mdeg0 = R}, $d+e=4$, $d\ge 1$ and $e\ge 1$. By symmetry, we can also assume $d\le e$, so the only possibilities for $(d,e)$ are $(1,3)$ and $(2,2)$.

Case $(d,e)=(1,3)$: first, we claim that $Y^3\in G(\mathfrak{b})$. Indeed, if this is not the case, for every monomial $X^iY^j\in G(\mathfrak{b})$ we have either $i\ge 1$ or $j\ge 4$, as $e=3$; the product of such elements with monomials of $G(\mathfrak{a})$ yields elements of the form $X^kY^\ell$, with either $k\ge 1$ or $\ell\ge 5$. In particular, $Y^4\not\in G(\mathfrak{a}\cdot\mathfrak{b})$, a contradiction. Now suppose $X\notin G(\mathfrak{a})$; since $d=1$, for every monomial $X^iY^j\in G(\mathfrak{a})$ we have either $i\ge 2$ or $j\ge 1$. The product of such elements with monomials of $ G(\mathfrak{b})$ yields elements of the form $X^kY^\ell$, with either $k+\ell\ge 5$ or $\ell\ge 1$. In particular, $X^4\not\in G(\mathfrak{a}\cdot\mathfrak{b})$, again a contradiction; therefore, $X\in G(\mathfrak{a})$. We then obtain $XY^3\in\mathfrak{a}\cdot\mathfrak{b}$, absurd by Lemma \ref{lem: mon_gen}, as none of the monomials $X^4,X^3Y,X^2Y^2,Y^4$ divide $XY^3$, despite forming a generating set of $\mathfrak{c}_4=\mathfrak{a}\cdot\mathfrak{b}$.

Case $(d,e)=(2,2)$: arguing in the previous case, we obtain $X^2,Y^2\in G(\mathfrak{a})\cap G(\mathfrak{b})$. Since $\langle X^2,Y^2\rangle^2=\langle X^4,X^2Y^2,Y^4\rangle \subsetneq \mathfrak{c}_4$, we may assume without loss of generality that $\langle X^2,Y^2\rangle \subsetneq \mathfrak{a}$. By Lemma \ref{lem: mon_gen}, there exist nonnegative integers $i$ and $j$ such that $X^iY^j\in G(\mathfrak{a})$, and neither $X^2$ nor $Y^2$ divides $X^iY^j$; since $\mdeg(X^iY^j)\ge d=2$, the only possibility is $(i,j)=(1,1)$, i.e., $XY\in G(\mathfrak{a})$. Multiplying with $Y^2\in\mathfrak{b}$, we obtain $XY^3\in\mathfrak{a}\cdot\mathfrak{b}$, a contradiction (as in the previous case).

As we have found a contradiction in all the possible cases, we conclude that $\mathfrak{c}_4$ does not admit a factorization as the product of two nonunit monomial ideals.
\end{example}

Next, we see to what extent \cite[Theorem 5.1]{Ge-Kh21} can be adapted to the monoid $\Mon(R)$. Let us define a further class of monomial ideals, namely $\mathfrak a_k={\langle X,Y\rangle}^k$, for any $k\in\mathbb N^+$.

\begin{theorem}\label{thm: 5.1 Mon}
The monoid of monomial ideals $\Mon(R)$ of the ring $R=K[X_1\dots,X_N]$, where $K$ is a field and $N\ge 2$, is a BF-monoid. Moreover,
\begin{enumerate}
    \item\label{thm: 5.1 Mon 1} $\Mon(R)$ is neither locally finitely generated nor transfer Krull;
    \item\label{thm: 5.1 Mon 2} $\mathcal{U}_k(\Mon(R)) = \mathbb{N}_{\ge 2}$ for all $k\ge 2$;
    \item\label{thm: 5.1 Mon 3} $\mathsf{L}_{\Mon(R)}(\mathfrak a_k)=\llb 2,k \rrb$ for all $k \ge 2$;
    \item\label{thm: 5.1 Mon 4} $\Mon(R)$ is fully elastic.
\end{enumerate}
\end{theorem}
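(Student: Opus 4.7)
The plan is to establish the BF-property first and use it as scaffolding for everything else. Since $\mdeg\colon\Mon(R)\to(\mathbb N,+)$ is a monoid homomorphism by \eqref{eq: additivity min-deg} and Remark~\ref{rem: mdeg0 = R} gives $\mdeg I\ge 1$ for every proper monomial ideal, every factorization of $I$ into non-units has length at most $\mdeg(I)$; atomicity follows by a straightforward induction on $\mdeg$. Hence $\Mon(R)$ is a $\mathsf{BF}$-monoid with $\mathsf L(I)\subseteq\llb 1,\mdeg(I)\rrb$ for every $I$.

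The technical heart is \ref{thm: 5.1 Mon 3}. The upper bound $\mathsf L(\mathfrak a_k)\subseteq\llb 2,k\rrb$ is immediate from the $\mdeg$-bound combined with $\mathfrak a_k=\mathfrak a_1\cdot\mathfrak a_{k-1}$ (so $\mathfrak a_k$ is not an atom). For the reverse inclusion I induct on $k\ge 2$: assuming $\mathsf L(\mathfrak a_{k-1})=\llb 2,k-1\rrb$, the factorization $\mathfrak a_k=\mathfrak a_1\cdot\mathfrak a_{k-1}$, with $\mathfrak a_1$ an atom of $\Mon(R)$ as every ideal of min-degree $1$ is, produces factorizations of $\mathfrak a_k$ of every length in $\llb 3,k\rrb$. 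For length $2$ one uses $\mathfrak a_3=\mathfrak b_2\cdot\mathfrak a_1$, while for $k\ge 4$ a direct generator calculation shows $\mathfrak c_{k-1}\cdot\mathfrak a_1=\mathfrak a_k$ (multiplying the minimal generators of $\mathfrak c_{k-1}$ by $X$ and $Y$ produces precisely the full list of monomials $X^{k-j}Y^j$ with $j\in\llb 0,k\rrb$). Here $\mathfrak c_{k-1}$ is an atom of $\Mon(R)$ by \cite[Prop.~5.10]{Ge-Kh21} whenever $k-1$ is odd or $k-1\ge 6$, and by Example~\ref{c4 atom Mon} in the exceptional case $k=5$; the exceptional status of $\mathfrak c_4$ in $\Mon(R)$ (as opposed to $\I(R)$) is exactly what makes the induction go through for every $k\ge 3$.

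From \ref{thm: 5.1 Mon 3}, part \ref{thm: 5.1 Mon 2} is immediate: for $k,\ell\ge 2$ the set $\mathsf L(\mathfrak a_{\max(k,\ell)})=\llb 2,\max(k,\ell)\rrb$ contains both $k$ and $\ell$, so $\ell\in\mathcal U_k(\Mon(R))$. For \ref{thm: 5.1 Mon 4}, (3) already gives $\rho(\Mon(R))=\infty$. To realize arbitrary rational elasticities I exploit that $\langle X\rangle\in\Mon(R)$ is prime: if $\langle X\rangle$ divides a product $IJ$ in $\Mon(R)$ then every minimal generator of $IJ$ is divisible by $X$, and via Lemma~\ref{lem: mon_gen} this forces all minimal generators of $I$, or all those of $J$, to be divisible by $X$. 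Since $\langle X\rangle$ is prime (and principal, hence cancellative), $\mathsf L(\mathfrak a_k\cdot\langle X\rangle^n)=\mathsf L(\mathfrak a_k)+n=\llb n+2,n+k\rrb$, with elasticity $(n+k)/(n+2)$; for any coprime $p>s\ge 2$ the choice $n=s-2$, $k=p-s+2$ realizes $q=p/s$, and the remaining case $s=1$ (integer $q=p\ge 2$) is handled by $\mathfrak a_{2p}$.

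It remains to argue \ref{thm: 5.1 Mon 1}. Non-local-finite-generation is soft: the same type of generator calculation as above shows $\mathfrak b_i\cdot\mathfrak a_{j-i}=\mathfrak a_j$ whenever $j\ge 2i-1$, so every $\mathfrak b_i$ divides some power of $\mathfrak a_1$ and therefore belongs to $\llb\mathfrak a_1\rrb$; since the $\mathfrak b_i$ are pairwise distinct atoms, $\llb\mathfrak a_1\rrb$ has infinitely many atoms. The harder half, and the step I expect to be the main obstacle, is the non-transfer-Krullness. My plan is to identify $\mathcal M_2(X,Y)$ as a divisor-closed submonoid of $\Mon(R)$, invoke the isomorphism $\mathcal M_2(X,Y)\cong\mathcal P_{{\rm fin},0}(\mathbb N)$, and conclude using that $\mathcal P_{{\rm fin},0}(\mathbb N)$ is not transfer Krull (in the spirit of \cite[Prop.~4.12]{Fan-Tr18}) together with the standard fact that transfer Krullness is inherited by divisor-closed submonoids. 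The delicate point is the divisor-closedness itself: one must show that if $IJ$ is equi-generated (i.e.\ lies in $\mathcal M_2(X,Y)$) then both $I$ and $J$ are equi-generated. I expect this to follow from a degree-counting argument, applied via Lemma~\ref{lem: mon_gen} to a minimal generator of $I$ of atypical degree and paired with the lowest-degree minimal generator of $J$, showing the resulting product is minimal in $IJ$ and hence forces $IJ$ to have a generator of degree $>\mdeg(I)+\mdeg(J)$, contradicting equi-generation.
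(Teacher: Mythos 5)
Your arguments for the $\mathsf{BF}$-property and for parts (2), (3), (4) are correct and run along the same lines as the paper's: min-degree induction for $\mathsf{BF}$, the identity $\mathfrak c_{k-1}\mathfrak a_1=\mathfrak a_k$ together with the atomicity of $\mathfrak c_4$ in $\Mon(R)$ for (3), (2) read off from (3), and the prime $\langle X\rangle$ for (4). (The paper obtains (3) by importing the proof of \cite[Theorem 5.1.3]{Ge-Kh21} and substituting $\mathfrak a_5=\mathfrak a_1\mathfrak c_4$ for the one non-monomial step there, and obtains (4) by citing \cite[Prop.~3.2]{Ge-Kh21}, but your more explicit computations say the same thing.) Your treatment of non-local-finite-generation in (1), via $\mathfrak b_i\cdot\mathfrak a_{j-i}=\mathfrak a_j$ for $j\ge 2i-1$, is also correct and is essentially the argument of \cite[Lemma 5.3.2]{Ge-Kh21}, which the paper cites.

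The non-transfer-Krull half of (1) is where the plan fails, and it fails at precisely the step you flagged as delicate: $\mathcal M_2(X,Y)$ is \emph{not} divisor-closed in $\Mon(R)$. Take $I'=\langle X^3,X^2Y^2,Y^3\rangle$. Its minimal generating set $\{X^3,X^2Y^2,Y^3\}$ mixes degrees $3$ and $4$, so $I'\notin\mathcal M_2(X,Y)$; nevertheless $I'\cdot\mathfrak a_3=\mathfrak a_6$ (each $X^{6-j}Y^j$ with $j\in\llb 0,6\rrb$ is a product of a generator of $I'$ with one of $\mathfrak a_3$, and every such product has $XY$-degree at least $6$), and $\mathfrak a_6=I_{\llb 0,6\rrb}\in\mathcal M_2(X,Y)$. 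This also refutes the mechanism you anticipate: the ``atypical'' generator $X^2Y^2$ of $I'$ times the lowest-degree generator $Y^3$ of $\mathfrak a_3$ gives $X^2Y^5$, which is \emph{not} a minimal generator of $I'\mathfrak a_3=\mathfrak a_6$, being divisible by $X^2Y^4$. (A separate slip: $\mathcal M_2(X,Y)\cong\mathcal P_{\rm fin}(\mathbb N)$, not $\mathcal P_{{\rm fin},0}(\mathbb N)$.) The paper avoids all of this with a short direct argument that you should adopt: since $\mathfrak a_1\mathfrak a_2=\mathfrak a_1\mathfrak b_2$, a transfer homomorphism $\theta\colon\Mon(R)\to H$ into a Krull (hence cancellative) monoid $H$ would force $\theta(\mathfrak a_2)=\theta(\mathfrak b_2)$; by Lemma~\ref{lem: transfer Krull}, $\theta(\mathfrak b_2)$ is an atom of $H$ because $\mathfrak b_2$ is one of $\Mon(R)$, so $\theta(\mathfrak a_2)$ would be an atom of $H$ and hence $\mathfrak a_2$ an atom of $\Mon(R)$, a contradiction.
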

\begin{proof}
Let $I\in\Mon(R)$ with $m=\mdeg(I)$. Firstly, since every nonunit (in particular, every atom) of $\Mon(R)$ has positive min-degree by Remark \ref{rem: mdeg0 = R}, equation (\ref{eq: additivity min-deg}) implies that the length of any factorization (into atoms of $\Mon(R)$) of $I$ is at most $m$. We will now show by induction on $m$ that $I$ always admits at least one factorization, thus proving that $\Mon(R)$ is a BF-monoid.

If $m=0$, then $I=R$, by Remark \ref{rem: mdeg0 = R}. When $m\in\mathbb{N}^+$, either $I$ is itself an atom, or there exist $\mathfrak{a},\mathfrak{b}\in\Mon(R)\setminus\{R\}$ such that $I=\mathfrak{a}\cdot\mathfrak{b}$. By equation (\ref{eq: additivity min-deg}) and Remark \ref{rem: mdeg0 = R}, we have $m=\mdeg(\mathfrak{a})+\mdeg(\mathfrak{b})$, $\mdeg(\mathfrak{a})\ge 1$ and $\mdeg(\mathfrak{b})\ge 1$. Thus, $\mdeg(\mathfrak{a}) < m$ and $\mdeg(\mathfrak{b}) < m$, whence by the inductive hypothesis both $a$ and $b$ can be factorized into atoms.

(\ref{thm: 5.1 Mon 1}) The fact that $\Mon(R)$ is not locally finitely generated follows from the proof of the corresponding statement for $\I(R)$ in \cite[Lemma 5.3.2]{Ge-Kh21}, since all the ideals involved are monomial, and a monomial ideal that is an atom in $\I(R)$ is clearly also an atom in $\Mon(R)$. For the second part of the statement, assume to the contrary that there are a Krull monoid $H$ and a transfer homomorphism $\theta\colon\Mon(R)\to H$. Consider the monomial ideals
\[\mathfrak a_1 = \langle X,Y\rangle,\ \mathfrak a_2 = {\langle X,Y\rangle}^2={\mathfrak a_1}^2,\ \mathfrak b_2 = \langle X^2,Y^2\rangle.\]
A straightforward computation shows that $\mathfrak a_1\mathfrak a_2 = \mathfrak a_1\mathfrak b_2$, whence $\theta(\mathfrak a_1)\theta(\mathfrak a_2)=\theta(\mathfrak a_1)\theta(\mathfrak b_2)$ in $H$. Since Krull monoids are cancellative, we infer $\theta(\mathfrak a_2)=\theta(\mathfrak b_2)$. By Lemma \ref{lem: transfer Krull}, for any $I\in\Mon(R)$ we have $I\in\mathscr A(\Mon(R))$ if and only if $\theta(I)\in\mathscr A(H)$. Since $\mathfrak b_2$ is an atom of $\Mon(R)$, it follows that $\theta(\mathfrak b_2)$ is an atom of $H$. Thus, $\theta(\mathfrak a_2)$ is also an atom of $H$, whence $\mathfrak a_2$ is an atom of $\Mon(R)$, a contradiction.

(\ref{thm: 5.1 Mon 2}) One can proceed exactly as in \cite[Lemma 5.3.3]{Ge-Kh21} to show that $\mathcal U_2(\Mon(R))=\mathbb N_{\ge 2}$, as all the ideals occurring in that proof are monomial. Since $\Mon(R)$ is a BF-monoid, the conclusion follows by \cite[Lemma 3.1.3]{Ge-Kh21}.

(\ref{thm: 5.1 Mon 3}) Clearly, $\mdeg(\mathfrak a_k)=k$ and, for $k\ge 2$, the ideal $\mathfrak a_k$ is not an atom of $\Mon(R)$. Since the length of a factorization of a monomial ideal is bounded by its min-degree and $1\in L_{\Mon(R)}(\mathfrak a_k)$ if and only if $\mathfrak a_k$ is an atom (i.e., if and only if $k=1$), we infer that $L_{\Mon(R)}(\mathfrak a_k)\subseteq\llb 2,k\rrb$, for any $k\ge 2$. For the converse inclusion we refer to the proof of \cite[Theorem 5.1.3]{Ge-Kh21}, noting that all the ideals therein are monomial, with one exception: when proving that $2\in L_{\I(R)}(\mathfrak a_5)$, the authors use a factorization into genuine polynomial ideals to get around the fact that $\mathfrak c_4$ is generally not an atom of $\I(R)$. In our case, by considering the identity
\[\mathfrak a_5 = {\langle X,Y\rangle}^5 = \langle X,Y\rangle \langle X^4, X^3Y, X^2Y^2, Y^4\rangle = \mathfrak a_1 \mathfrak c_4,\]
involving only monomial ideals, and recalling Example \ref{c4 atom Mon}, we can conclude that $2\in L_{\Mon(R)}(\mathfrak a_5)$.

(\ref{thm: 5.1 Mon 4}) Set $\mathfrak p=\langle X\rangle$, denote by $H_1$ the free monoid with basis $\{\mathfrak p\}$ and let $H_2=\{\mathfrak a\in\Mon(R)\colon \mathfrak p\nmid\mathfrak a\}$. The ideal $\mathfrak p$ is principal and therefore invertible in the monoid of nonzero fractional ideals of $R$. In particular, it is a cancellative element of this monoid, and hence also a cancellative element of $\Mon(R)$. Moreover, $\mathfrak p$ is clearly prime, whence we obtain $\Mon(R) = H_1\times H_2$. Now observe that $\mathfrak a_k\in H_2$ for all $k\in\mathbb N^+$, as $Y^k\in\mathfrak a_k$. By item (\ref{thm: 5.1 Mon 3}), we can then apply \cite[Prop.~3.2]{Ge-Kh21} to conclude that $\Mon(R)$ is fully elastic.
\end{proof}

\subsection{Atoms and sets of lengths in $\Mon(R)$} We continue our investigation of $\Mon(R)$ by constructing new classes of atoms in this monoid. Fix a positive integer $n\ge 3$ and $a_1,\dots,a_n,a_{n+1}\in \mathbb{N}^+$ verifying conditions (C1), $a_{n+1}=a_1+\dots+a_{n-1}+2a_n$, and (C2), $a_{i+1}>2(a_1+\dots+a_i)$ for all $i\in\llb 1,n-1\rrb$. Keeping the notation of Section \ref{subsect: I_B}, for every nonempty $I\subseteq\llb 1,n+1\rrb$ we set $a_I=\sum_{i\in I} a_i$,  while setting $a_\emptyset=0$. For $r\in \llb3, n\rrb$ let us define:
\begin{equation}\label{def tilde b_r}
   \Tilde{\mathfrak{b}}_{r} := \langle \mathfrak{b}_{a_1}\cdot\mathfrak{b}_{a_3}\cdots\mathfrak{b}_{a_r}, X^{a_{\llb 3,r\rrb} - a_2}Y^{a_{3} - a_2} \rangle. 
\end{equation}

Note that $X^{a_{\llb 3,r\rrb} - a_2}Y^{a_{3} - a_2}$ is a proper generator of $\Tilde{\mathfrak{b}}_{r}$, i.e., $X^{a_{\llb 3,r\rrb} - a_2}Y^{a_{3} - a_2}\notin \mathfrak{b}_{a_1}\mathfrak{b}_{a_3}\cdots\mathfrak{b}_{a_r}$. To show that this is true, we first observe that
\[\mathfrak{b}_{a_1}\cdot\mathfrak{b}_{a_3}\cdots\mathfrak{b}_{a_r}=\langle X^{a_1+a_{\llb 3,r\rrb}-a_J} Y^{a_{J}} \colon J\subseteq\{1\}\cup\llb3,r\rrb\rangle,\]
and, by Lemma \ref{lem: mon_gen}, if $X^{a_{\llb 3,r\rrb} - a_2}Y^{a_{3} - a_2}\in \mathfrak{b}_{a_1}\cdot\mathfrak{b}_{a_3}\cdots\mathfrak{b}_{a_r}$, one of the generators of $\mathfrak{b}_{a_1}\cdot\mathfrak{b}_{a_3}\cdots\mathfrak{b}_{a_r}$ must divide it. Observe that for $J\subseteq \{1\} \cup\llb 3,r\rrb$
\begin{equation}\label{a1-a_3}
a_J>a_1 \Longrightarrow a_J\ge a_3.
\end{equation}
Thus, if there exists $J\subseteq\{1\}\cup\llb3,r\rrb$ such that $X^{a_1+a_{\llb 3,r\rrb}-a_J} Y^{a_{J}}$ divides $X^{a_{\llb 3,r\rrb} - a_2}Y^{a_{3} - a_2}$, then it must be $a_1+a_{\llb 3,r\rrb}-a_J\le a_{\llb 3,r\rrb} - a_2$, which implies $a_J\ge a_1+a_2$ and, by \eqref{a1-a_3}, $a_J\ge a_{3}$. Moreover, we must also have $a_J\le a_3-a_2$, but this contradicts the previous inequality. We can conclude that
\[ G(\tilde{\mathfrak{b}}_r) = \{ X^{a_1+a_{\llb 3,r\rrb}-a_J} Y^{a_{J}} \colon J\subseteq\{1\}\cup\llb3,r\rrb\}\cup\{X^{a_{\llb 3,r\rrb} - a_2}Y^{a_{3} - a_2}\}. \]

\begin{proposition}\label{lem: eq-monomial}
Let $n\ge 3$ be a positive integer. The following hold for every $r\in \llb3, n\rrb$:

\vspace{.2cm}
\begin{enumerate*}[label=\textup{(\arabic{*})}, resume, mode=unboxed]
    \item\label{lem: eq-monomial(1)} $\mathfrak{b}_{a_2}\cdot \tilde{\mathfrak{b}}_{r}= \prod_{i=1}^r\mathfrak{b}_{a_i}$.
\end{enumerate*}

\vspace{.2cm}
\begin{enumerate*}[label=\textup{(\arabic{*})}, resume, mode=unboxed]
    \item\label{lem: eq-monomial(2)} $I_{C_n} = \mathfrak{b}_{a_2}\cdot \tilde{\mathfrak{b}}_{r}\cdot \prod_{i=r+1}^{n+1}\mathfrak{b}_{a_i}$.
\end{enumerate*}
\end{proposition}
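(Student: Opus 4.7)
The plan is to reduce both parts to a direct computation based on Lemma \ref{lem: mon_gen}, with condition (C2) ($a_{i+1}>2(a_1+\dots+a_i)$) doing all the numerical work.

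For part \ref{lem: eq-monomial(1)}, I would first unfold the definition of $\Tilde{\mathfrak{b}}_{r}$ as the sum
\[
\Tilde{\mathfrak{b}}_{r} = \mathfrak{b}_{a_1}\cdot\mathfrak{b}_{a_3}\cdots\mathfrak{b}_{a_r} + \langle X^{a_{\llb 3,r\rrb} - a_2}Y^{a_{3} - a_2} \rangle,
\]
so that, distributing $\mathfrak{b}_{a_2}$, one obtains
\[
\mathfrak{b}_{a_2}\cdot\Tilde{\mathfrak{b}}_{r} = \prod_{i=1}^{r}\mathfrak{b}_{a_i} + \mathfrak{b}_{a_2}\cdot\langle X^{a_{\llb 3,r\rrb} - a_2}Y^{a_{3} - a_2} \rangle.
\]
The inclusion $\supseteq$ is then immediate. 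For the inclusion $\subseteq$, I only need to check that the two monomial generators of $\mathfrak{b}_{a_2}\cdot\langle X^{a_{\llb 3,r\rrb} - a_2}Y^{a_{3} - a_2}\rangle$, namely $X^{a_{\llb 3,r\rrb}}Y^{a_{3}-a_2}$ and $X^{a_{\llb 3,r\rrb}-a_2}Y^{a_{3}}$, already lie in $\prod_{i=1}^{r}\mathfrak{b}_{a_i}$. By Lemma \ref{lem: mon_gen}, this amounts to finding, for each of them, some $J\subseteq\llb 1,r\rrb$ such that the generator $X^{a_{\llb 1,r\rrb}-a_J}Y^{a_J}$ divides it.

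The natural choices, guided by matching $Y$-exponents from below, are $J=\{1,2\}$ for the first monomial (which reduces the claim to $a_1+a_2\le a_3-a_2$) and $J=\{3\}$ for the second (which reduces it to $a_1+2a_2\le a_3$). Both inequalities follow at once from condition (C2) applied with $i=2$, since $a_3>2(a_1+a_2)=2a_1+2a_2\ge a_1+2a_2$. This is the only place where the superincreasing hypothesis intervenes in a non-trivial way; I do not expect any real obstacle, only some bookkeeping with exponents.

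For part \ref{lem: eq-monomial(2)}, I would simply invoke Remark \ref{rem: I_C as product of b_i's}, which gives $I_{C_n}=\prod_{i=1}^{n+1}\mathfrak{b}_{a_i}$, split the product at index $r$, and apply part \ref{lem: eq-monomial(1)} to the first factor:
\[
I_{C_n} = \Bigl(\prod_{i=1}^{r}\mathfrak{b}_{a_i}\Bigr)\cdot\prod_{i=r+1}^{n+1}\mathfrak{b}_{a_i} = \mathfrak{b}_{a_2}\cdot\Tilde{\mathfrak{b}}_{r}\cdot\prod_{i=r+1}^{n+1}\mathfrak{b}_{a_i}.
\]
No further ingredients are needed.
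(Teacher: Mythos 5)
Your proof is correct and takes essentially the same route as the paper: the paper also reduces part (1) to showing that $X^{a_{\llb 3,r\rrb}}Y^{a_3-a_2}$ and $X^{a_{\llb 3,r\rrb}-a_2}Y^{a_3}$ lie in $\prod_{i=1}^r\mathfrak{b}_{a_i}$, exhibiting the divisors $X^{a_{\llb 3,r\rrb}}Y^{a_1+a_2}$ and $X^{a_{\llb 1,r\rrb}-a_3}Y^{a_3}$ (your $J=\{1,2\}$ and $J=\{3\}$) and invoking $a_3>2(a_1+a_2)$ from (C2). Part (2) is handled identically via Remark \ref{rem: I_C as product of b_i's}.
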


\begin{proof}
\ref{lem: eq-monomial(1)} By definition $\mathfrak{b}_{a_i}=\langle X^{a_i}, Y^{a_i}\rangle$ and $\Tilde{\mathfrak{b}}_{r} = \langle \mathfrak{b}_{a_1}\cdot\mathfrak{b}_{a_3}\cdots\mathfrak{b}_{a_r}, X^{a_{\llb 3,r\rrb} - a_2}Y^{a_{3} - a_2} \rangle$, whence 
\[\mathfrak{b}_{a_2}\cdot \tilde{\mathfrak{b}}_{r}= \langle\prod_{i=1}^r\mathfrak{b}_{a_i}, X^{a_{\llb3,r\rrb}}Y^{a_3-a_2},X^{a_{\llb3,r\rrb}-a_2}Y^{a_3}\rangle.\]
It is easy to see that $X^{a_{\llb3,r\rrb}}Y^{a_1+a_2}, X^{a_{\llb1,r\rrb}-a_3}Y^{a_3}\in \prod_{i=1}^r\mathfrak{b}_{a_i}$. But then
\[X^{a_{\llb3,r\rrb}}Y^{a_3-a_2}=X^{a_{\llb3,r\rrb}}Y^{a_1+a_2}\cdot Y^{a_3-2a_2-a_1} \in \prod_{i=1}^r\mathfrak{b}_{a_i},\]
since $a_3-2a_2-a_1>0$ by condition (C2), and 
\[X^{a_{\llb3,r\rrb}-a_2}Y^{a_3}=X^{a_{\llb1,r\rrb}-a_3}Y^{a_3}\cdot X^{a_3-2a_2-a_1} \in \prod_{i=1}^r\mathfrak{b}_{a_i}.\]
Therefore, $\mathfrak{b}_{a_2}\cdot \tilde{\mathfrak{b}}_{r}= \prod_{i=1}^r\mathfrak{b}_{a_i}$ as claimed.

\vspace{.2cm}
\ref{lem: eq-monomial(2)} It follows from item \ref{lem: eq-monomial(1)} and Remark \ref{rem: I_C as product of b_i's}.
\end{proof}

The relevance of the above proposition is made clear by the following result, whose proof is provided in Section \ref{sect: proofs}.
\begin{theorem}\label{th: b_r atom}
Let $n\ge 3$ be a fixed integer. For every $r\in \llb3, n\rrb$ ($n\ge 3$), $\tilde{\mathfrak{b}}_{r}$ is an atom of $\Mon(R)$.
\end{theorem}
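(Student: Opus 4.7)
The plan is a proof by contradiction. Assume $\tilde{\mathfrak{b}}_r = \mathfrak{a} \cdot \mathfrak{b}$ with $\mathfrak{a}, \mathfrak{b} \in \Mon(R) \setminus \{R\}$, and set $d := \mdeg(\mathfrak{a})$, $e := \mdeg(\mathfrak{b})$, $m := \mdeg(\tilde{\mathfrak{b}}_r)$. Condition (C2) gives $a_3 > a_1 + 2a_2$, hence the extra generator $g := X^{a_{\llb 3, r \rrb} - a_2} Y^{a_3 - a_2}$ has total degree strictly larger than $a_1 + a_{\llb 3, r \rrb}$, the common degree of the generators of $\mathfrak{b}_{a_1}\mathfrak{b}_{a_3}\cdots \mathfrak{b}_{a_r}$. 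Thus $m = a_1 + a_{\llb 3, r \rrb}$ and $(\tilde{\mathfrak{b}}_r)_K[m] = \spa_K\{X^{m - a_J} Y^{a_J} : J \subseteq \{1\} \cup \llb 3, r \rrb\}$. The arguments of Lemma \ref{lem: standard} depend only on this min-degree part (together with Remark \ref{rem: mdeg0 = R}) and yield $d + e = m$, $d, e \geq 1$; moreover, since $\mathfrak{a}$ and $\mathfrak{b}$ are monomial, $\mathfrak{a}_K[d]$ and $\mathfrak{b}_K[e]$ admit monomial bases.

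Next I would prove the rigidity of these bases. Lemma \ref{lem_5.6}\ref{lem_5.6(ii)} gives $(\tilde{\mathfrak{b}}_r)_K[m] = \mathfrak{a}_K[d] \cdot \mathfrak{b}_K[e]$, which combined with Lemma \ref{lemma_1} forces all $Y$-exponents $\alpha$ appearing in $\mathfrak{a}_K[d]$ to satisfy $\alpha = a_I$ for subsets $I$ of a fixed $P \subseteq \{1\} \cup \llb 3, r \rrb$, and similarly for $\mathfrak{b}_K[e]$ with a complementary $Q$; a dimension count then shows that $(P,Q)$ is a partition of $\{1\} \cup \llb 3, r \rrb$, $d = a_P$, $e = a_Q$, and the minimum-degree generators of $\mathfrak{a}$ and $\mathfrak{b}$ are exactly $\{X^{d - a_I} Y^{a_I} : I \subseteq P\}$ and $\{X^{e - a_J} Y^{a_J} : J \subseteq Q\}$. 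In particular $X^d, Y^d \in G(\mathfrak{a})$ and $X^e, Y^e \in G(\mathfrak{b})$.

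Since $g \in \mathfrak{a} \cdot \mathfrak{b}$, a short check using (C2) shows no subset-sum $a_J$ with $J \subseteq \{1\} \cup \llb 3, r \rrb$ lies in $[a_1 + a_2, a_3 - a_2]$, hence no minimum-degree generator of $\tilde{\mathfrak{b}}_r$ divides $g$. Consequently, by Lemma \ref{lem: mon_gen}, there must exist $u = X^p Y^q \in G(\mathfrak{a})$ and $v = X^s Y^t \in G(\mathfrak{b})$ with $uv = g$, so $p + s = a_{\llb 3, r \rrb} - a_2$ and $q + t = a_3 - a_2$. The contradiction is then extracted by inspecting the four monomials $Y^d v$, $X^d v$, $u Y^e$, $u X^e$: each lies in $\mathfrak{a} \mathfrak{b} = \tilde{\mathfrak{b}}_r$, so each must be divisible by some generator of $\tilde{\mathfrak{b}}_r$. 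The key numerical input (again from (C2)) is that, for any $P \subseteq \{1\} \cup \llb 3, r \rrb$, the only subset-sums $a_J$ lying in the open interval $(a_P, a_P + a_3)$ are $a_P + a_1$ (only when $1 \notin P$) and $a_P + a_3 - a_1$ (only when $1 \in P$ and $3 \notin P$); neither fits the range forced by the $Y$-exponent of the relevant product, yielding the desired contradiction.

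The hard part will be the final case analysis. When $u$ is on-axis ($u = X^d$ or $u = Y^d$), or is the off-axis minimum-degree generator $X^{d - a_1} Y^{a_1}$ (possible only if $1 \in P$), the contradiction follows quickly: at least one of $Y^d v$, $X^d v$ has no valid divisor in $\tilde{\mathfrak{b}}_r$. When however $u$ is a higher-degree generator of $\mathfrak{a}$, one of $Y^d v$ or $X^d v$ may still be divisible by the ``trivial'' generators $X^e Y^d$ or $X^m$; the contradiction in that regime must be produced by turning to $u X^e$ or $u Y^e$, and using the precise position of $p, q$ relative to $P$ and $Q$ to pinpoint the product that escapes $\tilde{\mathfrak{b}}_r$.
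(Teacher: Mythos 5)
Your proposal follows essentially the same strategy as the paper's proof: analyze the min-degree layer via Lemma \ref{lem_5.6}\ref{lem_5.6(ii)}, observe that the extra generator $g = X^{a_{\llb 3,r\rrb}-a_2}Y^{a_3-a_2}$ cannot arise from min-degree parts and so must be a product $uv$ of higher-degree minimal generators, and then multiply $u$ and $v$ by the on-axis generators of the other factor to produce a monomial that escapes $\tilde{\mathfrak{b}}_r$. Your structural claim — that when $\mathfrak a, \mathfrak b$ are monomial, $\mathfrak a_K[d]$ and $\mathfrak b_K[e]$ are spanned by exactly $\{X^{d-a_I}Y^{a_I}\colon I\subseteq P\}$ and $\{X^{e-a_J}Y^{a_J}\colon J\subseteq Q\}$ for a partition $(P,Q)$ of $\{1\}\cup\llb 3,r\rrb$ — is correct and is actually sharper than the paper's Lemma \ref{lem: b_r}, which only pins down the two on-axis monomials in each factor; the dimension count via Lemma \ref{lemma_1} and the super-increasing property (C2) does go through. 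Your ``key numerical input'' is, up to reformulation, precisely Lemma \ref{lemma a_J-a_I}.

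The genuine gap is the final step, which you explicitly leave as a sketch. Asserting that at least one of $Y^dv$, $X^dv$, $uY^e$, $uX^e$ lies outside $\tilde{\mathfrak{b}}_r$ is exactly what occupies the bulk of the paper's proof: six subcases (CASE 1/2 crossed with whether $\mathfrak a$, $\mathfrak b$ have non-axis generators), each with a careful chain of inequalities. Whether a given product lies in $\tilde{\mathfrak{b}}_r$ depends delicately on the positions of $p,q,s,t$ relative to $d=a_P$, $e=a_Q$, and on divisibility by both the min-degree generators and by $g$ itself; in particular, when $u$ (or $v$) is on-axis, one of the four products equals $g$ and is useless, and when they are not, $g$-divisibility must be ruled out before the interval argument can be invoked. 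This dichotomy is plausible but not self-evident, and nothing in your outline establishes it. Two further points to attend to when filling in the details: Lemma \ref{lemma a_J-a_I} is stated only for $r\ge 4$, so $r=3$ needs a separate (easy) check, and one should not assume $u$ is a min-degree generator — in fact at least one of $u,v$ necessarily has degree strictly above the min-degree, since $\deg g > m$, so your stronger structural result about $\mathfrak a_K[d]$, $\mathfrak b_K[e]$ gives no direct information about $u,v$ and the argument really does have to run through the on-axis products.
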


The construction of the class of atoms $\tilde{\mathfrak{b}}_r$ of $\Mon(R)$ gives us the opportunity to answer negatively, for this monoid, the question posed at the end of Section \ref{sec: atoms in I(R)}.
\begin{corollary}\label{cor: lengths}
For every fixed integer $n\ge 2$, $\mathsf{L}_{\Mon(R)}(I_{C_n})=\llb 2, n+1\rrb$.
\end{corollary}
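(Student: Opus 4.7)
The plan is to prove both inclusions in the equality $\mathsf{L}_{\Mon(R)}(I_{C_n})=\llb 2,n+1\rrb$.

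For the upper bound $\mathsf{L}_{\Mon(R)}(I_{C_n})\subseteq\llb 2,n+1\rrb$: first, $I_{C_n}$ is not an atom of $\Mon(R)$, since Remark \ref{rem: I_C as product of b_i's} already gives $I_{C_n}=\mathfrak{b}_{a_n}\cdot I_{B_n}$ with both factors proper monomial ideals, so $1\notin\mathsf{L}_{\Mon(R)}(I_{C_n})$. Next, if $I_{C_n}=u_1\cdots u_k$ is a factorization into atoms of $\Mon(R)$, then each $u_i$ is a non-unit of $\I(R)$ (the units of both monoids coincide with $R$) and hence factors into at least one atom of $\I(R)$ since $\I(R)$ is a $\mathsf{BF}$-monoid. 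Concatenating these refinements yields a factorization of $I_{C_n}$ in $\I(R)$ of length at least $k$, which by Theorem \ref{thm: I_B atom}\ref{thm: I_B atom(2)} is at most $n+1$, forcing $k\le n+1$.

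For the reverse inclusion I would exhibit an explicit factorization in $\Mon(R)$ of each length $k\in\llb 2,n+1\rrb$. The endpoints come directly from Remark \ref{rem: I_C as product of b_i's}: the identities $I_{C_n}=\mathfrak{b}_{a_1}\cdots\mathfrak{b}_{a_{n+1}}$ and $I_{C_n}=\mathfrak{b}_{a_n}\cdot I_{B_n}$ realize lengths $n+1$ and $2$ respectively, since the ideals $\mathfrak{b}_{a_i}$ are atoms of $\I(R)$ by \cite[Prop.~5.10.1]{Ge-Kh21} and $I_{B_n}$ is an atom of $\I(R)$ by Theorem \ref{thm: I_B atom}\ref{thm: I_B atom(1)}, so all these monomial ideals are atoms of $\Mon(R)$ as well. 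For the intermediate values $k\in\llb 3,n\rrb$ (a range which is nonempty only when $n\ge 3$), I would set $r=n+3-k\in\llb 3,n\rrb$ and invoke Proposition \ref{lem: eq-monomial}\ref{lem: eq-monomial(2)} to obtain
$$I_{C_n}=\mathfrak{b}_{a_2}\cdot\tilde{\mathfrak{b}}_r\cdot\prod_{i=r+1}^{n+1}\mathfrak{b}_{a_i},$$
a factorization of length exactly $1+1+(n+1-r)=k$ whose factors are all atoms of $\Mon(R)$ (the $\mathfrak{b}_{a_i}$ as before, and $\tilde{\mathfrak{b}}_r$ by Theorem \ref{th: b_r atom}).

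Since the heavy lifting has already been done in Theorems \ref{thm: I_B atom} and \ref{th: b_r atom} and Proposition \ref{lem: eq-monomial}, no substantial obstacle remains. The only mildly delicate point is the upper-bound transfer from $\Mon(R)$ to $\I(R)$: one has to observe that refining each $\Mon(R)$-atomic factor of $I_{C_n}$ into atoms of $\I(R)$ can only increase the overall length, so that the bound of Theorem \ref{thm: I_B atom}\ref{thm: I_B atom(2)} carries over to $\Mon(R)$.
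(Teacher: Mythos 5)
Your proof is correct and follows essentially the same route as the paper: the endpoints $\{2,n+1\}$ come from Remark \ref{rem: I_C as product of b_i's} together with Theorem \ref{thm: I_B atom}, the intermediate lengths from Proposition \ref{lem: eq-monomial}\ref{lem: eq-monomial(2)} and Theorem \ref{th: b_r atom}, and the upper bound from Theorem \ref{thm: I_B atom}\ref{thm: I_B atom(2)}. You are, if anything, more careful than the paper about justifying why the upper bound stated there for $\mathcal{I}(R)$ carries over to $\Mon(R)$ (by refining any $\Mon(R)$-factorization into a longer $\mathcal{I}(R)$-factorization), a step the paper leaves implicit.
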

\begin{proof}
    Fix $n\ge 2$. By Remark \ref{rem: I_C as product of b_i's}, we obtain $\{2,n+1\}\subseteq\mathsf{L}_{\Mon(R)}(I_{C_n})$, as the factorizations therein are into monomial ideals which are known to be atoms of $\I(R)$, and so also of $\Mon(R)$. By Theorem \ref{thm: I_B atom}\ref{thm: I_B atom(2)}, we have $\mathsf{L}_{\I(R)}(I_{C_n})\subseteq\llb 2, n+1\rrb$. Since any factorization $z\in\mathsf Z_{\Mon(R)}(I_{C_n})$ can be expanded into a factorization $w\in\mathsf Z_{\I(R)}(I_{C_n})$ with $\lvert w\rvert\ge \lvert z\rvert$, we obtain $\sup\mathsf L_{\I(R)}(I_{C_n})\ge \sup\mathsf L_{\Mon(R)}(I_{C_n})$, whence $\mathsf L_{\Mon(R)}(I_{C_n})$ is also contained in $\llb 2,n+1\rrb$ ($1\not\in \mathsf L_{\Mon(R)}(I_{C_n})$ as $I_{C_n}$ is not an atom). If $n=2$, the conclusion is trivial; if $n\ge 3$, it follows from Proposition \ref{lem: eq-monomial}\ref{lem: eq-monomial(2)} and Theorem \ref{th: b_r atom}.
\end{proof}

We conclude with a further theorem on the atomicity of $2$-generated ideals $\langle X^n,Y^m\rangle$. The proof is given in Section \ref{sect: proofs}.

\begin{theorem}\label{xm yn mon}
For every $m,n\in \mathbb{N}^+$, the ideal $I=\langle X^m, Y^n\rangle$ is an atom in $\Mon(R)$. 
\end{theorem}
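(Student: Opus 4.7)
The plan is a proof by contradiction. Suppose $I = \mathfrak{a} \cdot \mathfrak{b}$ is a nontrivial factorization in $\Mon(R)$; by the symmetry between $X$ and $Y$ I may assume $m \le n$, so Remark \ref{rem: min-deg of fg ideal} gives $\mdeg(I) = m$. Combining Remark \ref{rem: mdeg0 = R} with the additivity of the min-degree \eqref{eq: additivity min-deg}, the degrees $d = \mdeg(\mathfrak{a})$ and $e = \mdeg(\mathfrak{b})$ satisfy $d, e \geq 1$ and $d + e = m$.

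The heart of the argument is to use Lemma \ref{lem: mon_gen} to force pure-power generators in $\mathfrak{a}$ and $\mathfrak{b}$. Since $X^m$ lies in the product ideal, the lemma supplies a product of a generator of $\mathfrak{a}$ with a generator of $\mathfrak{b}$ that divides $X^m$, so both factors must be pure powers of $X$; the min-degree constraints then pin them down as exactly $X^d \in \mathfrak{a}$ and $X^e \in \mathfrak{b}$. Applying the same reasoning to $Y^n \in \mathfrak{a}\mathfrak{b}$ produces pure $Y$-powers $Y^{d''} \in \mathfrak{a}$ and $Y^{e''} \in \mathfrak{b}$ with $d'' + e'' \le n$ and both exponents at least $1$ (again by Remark \ref{rem: mdeg0 = R}, since neither ideal equals $R$).

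The contradiction then comes from the mixed monomial $X^d \cdot Y^{e''}$, which, being the product of a generator of $\mathfrak{a}$ and a generator of $\mathfrak{b}$, belongs to $\mathfrak{a}\mathfrak{b} = I = \langle X^m, Y^n\rangle$. By Lemma \ref{lem: mon_gen} it must be divisible by $X^m$ or by $Y^n$; but $X^m \nmid X^d Y^{e''}$ because $d \le m-1$, and $Y^n \nmid X^d Y^{e''}$ because $e'' \le n - d'' \le n-1$.

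The argument is short and essentially forced; the only real insight is to ``mix'' the two pure-power generators, one taken from each factor, to produce a monomial that cannot lie in the very simple ideal $\langle X^m, Y^n\rangle$. The main point to get right is that both $\mathfrak{a}$ and $\mathfrak{b}$ must contain pure powers of \emph{both} variables, which is guaranteed precisely by the simultaneous presence of $X^m$ and $Y^n$ in the product; everything else is min-degree bookkeeping.
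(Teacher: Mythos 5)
Your proof is correct and takes essentially the same route as the paper: force pure-power generators $X^d,X^e$ and $Y^{d''},Y^{e''}$ into $\mathfrak a$ and $\mathfrak b$ via Lemma \ref{lem: mon_gen}, then derive a contradiction from the mixed monomial $X^dY^{e''}\in \mathfrak a\mathfrak b$. The paper packages the pure-power step as Lemma \ref{lem: 2-gen} and treats $m=n$ (citing \cite{Ge-Kh21}) and $n=1$ separately, whereas your inline version handles all of $m\le n$ uniformly — a slight streamlining, but not a different argument.
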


\subsection{Proofs of Theorems \ref{th: b_r atom} and \ref{xm yn mon}}\label{sect: proofs}

We start our discussion by proving some technical lemmas.
\begin{lemma}\label{lemma a_J-a_I}
Fix a positive integer $n\ge 4$. Let $I,J\subseteq \{1\}\cup \llb 3,r\rrb$ with $r\in \llb 4,n\rrb$. If $a_J-a_I>a_1$ and $a_J-a_I\ne a_3-a_1$, then $a_J-a_I\ge a_3$.
\end{lemma}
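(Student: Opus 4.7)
My plan is to reduce to the disjoint case and then analyze the maximum index appearing in $J$, using the rapid-growth condition (C2) to force the conclusion.

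First, I would observe that $a_J - a_I = a_{J \setminus I} - a_{I \setminus J}$, so there is no loss of generality in assuming $I \cap J = \emptyset$. Since $a_J - a_I > a_1 > 0$, the set $J$ is nonempty; let $k := \max J$. I would first rule out $\max I > k$: if $\max I = \ell > k$, then condition (C2) gives $a_\ell > 2 a_{\llb 1, \ell-1 \rrb} \ge 2 a_J$, so $a_I \ge a_\ell > 2 a_J$, contradicting $a_J > a_I$. Since $I \cap J = \emptyset$, we conclude $\max I < k$ (or $I = \emptyset$).

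Next, I would split according to the value of $k \in \{1\} \cup \llb 3, r \rrb$:
\begin{itemize}
    \item If $k = 1$, then $J = \{1\}$ and $I = \emptyset$, giving $a_J - a_I = a_1$, which contradicts the hypothesis $a_J - a_I > a_1$.
    \item If $k = 3$, then $J \subseteq \{1,3\}$ with $3 \in J$, and $\max I < 3$ forces $I \subseteq \{1\}$ (with $I \cap J = \emptyset$). The possibilities $(J,I) = (\{3\},\emptyset)$, $(\{3\},\{1\})$, $(\{1,3\},\emptyset)$ give differences $a_3$, $a_3 - a_1$, $a_1 + a_3$. The middle one is excluded by hypothesis, and the other two satisfy $a_J - a_I \ge a_3$.
    \item If $k \ge 4$, then $a_J \ge a_k$ and $I \subseteq \llb 1, k-1 \rrb$, so $a_I \le a_{\llb 1, k-1 \rrb}$. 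By (C2) we have $a_k > 2 a_{\llb 1, k-1\rrb}$, hence
    \[a_J - a_I \ge a_k - a_{\llb 1,k-1\rrb} > a_{\llb 1, k-1 \rrb} \ge a_1 + a_2 + a_3 > a_3,\]
    using $k - 1 \ge 3$ in the last inequality.
\end{itemize}

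I do not expect any real obstacle here; the proof is a direct exploitation of the super-increasing nature of the sequence $(a_i)$ from condition (C2), with the only slightly delicate point being the small case $k = 3$, where the excluded value $a_3 - a_1$ actually arises and must be ruled out by hypothesis rather than by the growth estimate.
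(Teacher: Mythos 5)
Your proof is correct and follows essentially the same route as the paper's. The paper works directly with $s = \max(J\setminus I)$ and $t = \max(I\setminus J)$ rather than first discarding $I\cap J$, shows $s>t$ by the same growth argument you use to rule out $\max I > k$, isolates the exceptional configuration $s=3$, $t=1$ (your $k=3$ case producing $a_3 - a_1$), and then for $s\ge 4$ applies (C2) exactly as in your $k\ge 4$ case to get $a_J - a_I \ge a_s - a_{\llb 1,s-1\rrb} > a_{\llb 1,s-1\rrb} \ge a_{\llb 1,3\rrb} > a_3$. The only presentational difference is that you make the WLOG reduction to disjoint $I,J$ explicit and enumerate the small case $k=3$ directly, whereas the paper treats $I\setminus J=\emptyset$ as a separate preliminary case; both are sound and rely on the same use of condition (C2).
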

\begin{proof}
We first observe that $a_J-a_I=a_{J\setminus I}-a_{I\setminus J}$. If $I\setminus J=\emptyset$, then $a_J-a_I=a_{J\setminus I}$ and $a_{J\setminus I}>a_1$ trivially implies $a_{J\setminus I}\ge a_3$, as $J\setminus I\subseteq \{1\}\cup \llb 3,r\rrb$. Assume then $I\setminus J\ne\emptyset$. Clearly, our assumptions also imply $J\setminus I\ne \emptyset$ (otherwise $a_J-a_I<0$), so there exist $s,t>0$ such that $s=\max (J\setminus I)$ and $t=\max (I\setminus J)$. Note that, since $(J\setminus I)\cap (I\setminus J)=\emptyset$, $s\ne t$.
Moreover, it must be $s>t$. In fact, if $s<t$ (implying that $t\ge 3$), then 
\[a_{I\setminus J}\ge a_t>a_{\llb 1,t-1\rrb}\ge a_{\llb 1,s\rrb}\ge a_{J\setminus I},\]
but this is absurd since we are assuming $a_J-a_I=a_{J\setminus I}-a_{I\setminus J}>0$.
We can also prove that $s\ge 4$. If this is not the case, in fact, the only admissible alternative is $s=3$ and $t=1$. However, this leads to $I\setminus J=\{1\}$ and $J\setminus I=\{3\}$, i.e., to $a_J-a_I=a_3-a_1$, a case that we discarded by assumption. 

Therefore, we can conclude that 
\[a_J-a_I=a_{J\setminus I}-a_{I\setminus J}\ge a_s-a_{\llb 1,s-1\rrb}>a_{\llb 1,s-1\rrb}\ge a_{\llb 1,3\rrb}>a_3.\]
\end{proof}

\begin{lemma}\label{lem: min-deg}
Let $I,  \mathfrak{a}, \mathfrak{b} \in \Mon(R)\setminus{\{R\}}$ with $\mdeg(I) = m$, $\mdeg(\mathfrak{a}) = d$, $\mdeg(\mathfrak{b}) = e$, $X^m, Y^m \in I$ and $I =  \mathfrak{a}\cdot \mathfrak{b}$. Then:

\vspace{.2cm}
\begin{enumerate*}[label=\textup{(\arabic{*})}, resume, mode=unboxed]
\item\label{lem: min-deg(1)} $d, e \geq 1$.
\end{enumerate*}

\vspace{.2cm}
\begin{enumerate*}[label=\textup{(\arabic{*})}, resume, mode=unboxed]
\item\label{lem: min-deg(2)} $X^d,Y^d\in G(\mathfrak a)$ and $X^e,Y^e\in G(\mathfrak b)$.
\end{enumerate*}
\end{lemma}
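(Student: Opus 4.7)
The plan is to handle the two statements separately, both via short direct arguments.

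For \ref{lem: min-deg(1)}: this is immediate from Remark \ref{rem: mdeg0 = R}. Since $\mathfrak{a}$ and $\mathfrak{b}$ are assumed to be proper monomial ideals of $R$, neither can have min-degree $0$, so $d, e \geq 1$.

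For \ref{lem: min-deg(2)}: I would first apply equation (\ref{eq: additivity min-deg}) to obtain $m = d + e$. Then I would focus on $X^d \in G(\mathfrak{a})$ (the arguments for $Y^d$, $X^e$, $Y^e$ being symmetric). Since $X^m$ is a monomial lying in $I = \mathfrak{a}\cdot\mathfrak{b}$, by Lemma \ref{lem: mon_gen} applied to the generating set $\{fg \colon f \in G(\mathfrak{a}),\, g \in G(\mathfrak{b})\}$ of $\mathfrak{a}\cdot\mathfrak{b}$, there exist $f \in G(\mathfrak{a})$ and $g \in G(\mathfrak{b})$ with $fg \mid X^m$. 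A monomial divisor of $X^m$ is necessarily a pure power of $X$, so $f = X^{d'}$ and $g = X^{e'}$ for some $d', e' \in \mathbb{N}$ with $d' + e' \leq m$.

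The key final step is the min-degree squeeze: since $f \in G(\mathfrak{a}) \subseteq \mathfrak{a}$, we have $d' = \mdeg(f) \geq \mdeg(\mathfrak{a}) = d$, and analogously $e' \geq e$. Combined with $d' + e' \leq m = d + e$, this forces $d' = d$ and $e' = e$, so $X^d = f \in G(\mathfrak{a})$ and $X^e = g \in G(\mathfrak{b})$. Repeating the argument verbatim with $Y^m \in I$ in place of $X^m$ yields $Y^d \in G(\mathfrak{a})$ and $Y^e \in G(\mathfrak{b})$. I do not anticipate any real obstacle here; the only point that needs a moment of care is that the divisor of a pure power of a single variable must itself be a pure power of that variable, which is immediate for monomials.
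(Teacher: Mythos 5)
Your proof is correct and takes essentially the same route as the paper: item (1) via Remark \ref{rem: mdeg0 = R}, and item (2) by combining $m=d+e$ with the observation that a monomial generator $fg$ (with $f\in G(\mathfrak a)$, $g\in G(\mathfrak b)$) dividing $X^m$ must consist of pure powers of $X$, then squeezing the exponents against the min-degrees. The paper phrases the squeeze a bit differently (it first deduces $X^d\in\mathfrak a$, $X^e\in\mathfrak b$, then passes to $G(\mathfrak a)$, $G(\mathfrak b)$ by a second appeal to Lemma \ref{lem: mon_gen}), but your version, working with $G(\mathfrak a)$ and $G(\mathfrak b)$ from the outset, is a minor streamlining of the same idea.
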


\begin{proof}
\ref{lem: min-deg(1)} Apply Remark \ref{rem: mdeg0 = R}.

\vspace{.2cm}
\ref{lem: min-deg(2)} Since $m=\mdeg(I)$ and $X^m,Y^m\in I$, by Lemma \ref{lem: mon_gen} we deduce $X^m,Y^m\in G(I)$. As $G(I)=G(\mathfrak a\cdot\mathfrak b)\subseteq\{fg\colon f\in G(\mathfrak a),g\in G(\mathfrak b)\}$ and $X^m\in G(I)$, necessarily $X^i\in G(\mathfrak a)$ and $X^j\in G(\mathfrak b)$ for some $i,j$ such that $i+j=m$. Since $\mdeg(\mathfrak a)=d$ and $\mdeg(\mathfrak b)=e$, we have $i\ge d$ and $j\ge e$, whence equation \eqref{eq: additivity min-deg} (i.e., $m=d+e$) implies $i=d$ and $j=e$. Analogously, we can prove $Y^d\in G(\mathfrak a)$ and $Y^e\in G(\mathfrak b)$.
\end{proof}

Let $n\ge 3$ and $r\in\llb 3,n\rrb$. Recall that
\begin{equation}\label{b_r generators}
\Tilde{\mathfrak{b}}_{r} = \langle \left\{ X^{a_1+a_{\llb 3,r\rrb}-a_J} Y^{a_{J}} \colon J\subseteq\{1\}\cup\llb3,r\rrb\right\}, X^{a_{\llb3,r\rrb}-a_2}Y^{a_3-a_2}\rangle.
\end{equation}
Note that $\mdeg(\Tilde{\mathfrak{b}}_{r})=a_1+a_{\llb 3,r\rrb}=a_{\{1\}\cup\llb 3,r\rrb}$. In fact, by definition of the $a_i$'s, $a_1<a_3-2a_2$ and so $a_1+a_{\llb 3,r\rrb}<a_{\llb 3,r\rrb}+a_3-2a_2$.

\begin{lemma}\label{lem: b_r}
Assume $\Tilde{\mathfrak{b}}_{r}=\mathfrak{a}\cdot\mathfrak{b}$
for some $\mathfrak{a},\mathfrak{b}\in \Mon(R)\setminus \{R\}$ and let $\alpha=\mdeg(\mathfrak{a})$ and $\beta=\mdeg(\mathfrak{b})$. Then:

\vspace{.2cm}
\begin{enumerate*}[label=\textup{(\arabic{*})}, resume, mode=unboxed]
    \item\label{lem: b_r(1)} $\alpha,\beta\ge1$ and $a_{\{1\}\cup\llb 3,r\rrb}=\alpha+\beta$;
\end{enumerate*}

\vspace{.2cm}
\begin{enumerate*}[label=\textup{(\arabic{*})}, resume, mode=unboxed]
    \item\label{lem: b_r(2)} $X^\alpha,Y^\alpha\in G(\mathfrak{a})$ and $X^\beta,Y^\beta\in G(\mathfrak{b})$;
\end{enumerate*}

\vspace{.2cm}
\begin{enumerate*}[label=\textup{(\arabic{*})}, resume, mode=unboxed]
    \item\label{lem: b_r(3)} $\alpha=a_{I_1}$, $\beta=a_{I_2}$ with $I_1,I_2\subseteq\{1\}\cup \llb 3,r\rrb$ and $I_1\cap I_2=\emptyset$.
\end{enumerate*}
\end{lemma}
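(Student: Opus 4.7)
The plan is to reduce parts (1) and (2) to the general Lemma \ref{lem: min-deg} and then prove (3) by testing divisibility of a carefully chosen monomial against the generating set of $\Tilde{\mathfrak{b}}_r$.

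First I would verify that Lemma \ref{lem: min-deg} applies to $I=\Tilde{\mathfrak{b}}_r$. From the description \eqref{b_r generators}, comparing the min-degrees $a_1+a_{\llb 3,r\rrb}$ (coming from generators of the first type) with $a_{\llb 3,r\rrb}+a_3-2a_2$ (coming from the exceptional generator), condition (C2) gives $a_3-2a_2>2a_1>a_1$, so $m:=\mdeg(\Tilde{\mathfrak{b}}_r)=a_1+a_{\llb 3,r\rrb}=a_{\{1\}\cup\llb 3,r\rrb}$. Taking $J=\emptyset$ and $J=\{1\}\cup\llb 3,r\rrb$ in \eqref{b_r generators} shows $X^m,Y^m\in\Tilde{\mathfrak{b}}_r$. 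Hence Lemma \ref{lem: min-deg} yields immediately \ref{lem: b_r(1)} (namely $\alpha,\beta\ge 1$ and $\alpha+\beta=m=a_{\{1\}\cup\llb 3,r\rrb}$) and \ref{lem: b_r(2)} (namely $X^\alpha,Y^\alpha\in G(\mathfrak{a})$ and $X^\beta,Y^\beta\in G(\mathfrak{b})$).

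For \ref{lem: b_r(3)}, the key move is to use part \ref{lem: b_r(2)} to produce the monomial $X^\beta\cdot Y^\alpha=X^\beta Y^\alpha\in\mathfrak{a}\cdot\mathfrak{b}=\Tilde{\mathfrak{b}}_r$, and then apply Lemma \ref{lem: mon_gen}: some generator of $\Tilde{\mathfrak{b}}_r$ from \eqref{b_r generators} must divide $X^\beta Y^\alpha$. I would first rule out the exceptional generator $X^{a_{\llb 3,r\rrb}-a_2}Y^{a_3-a_2}$: if it divided $X^\beta Y^\alpha$, then $a_{\llb 3,r\rrb}-a_2\le\beta$ and $a_3-a_2\le\alpha$, so adding and using \ref{lem: b_r(1)} gives $a_{\llb 3,r\rrb}+a_3-2a_2\le \alpha+\beta=a_1+a_{\llb 3,r\rrb}$, i.e.\ $a_3-2a_2\le a_1$, contradicting condition (C2). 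Therefore a generator of the form $X^{a_1+a_{\llb 3,r\rrb}-a_J}Y^{a_J}$, with $J\subseteq\{1\}\cup\llb 3,r\rrb$, divides $X^\beta Y^\alpha$. This gives $a_1+a_{\llb 3,r\rrb}-a_J\le\beta$ and $a_J\le\alpha$, but the sum of these exactly matches $\alpha+\beta=a_1+a_{\llb 3,r\rrb}$, so both inequalities are equalities. Setting $I_1:=J$ and $I_2:=(\{1\}\cup\llb 3,r\rrb)\setminus J$, one has $\alpha=a_{I_1}$, $\beta=a_{I_2}$, $I_1,I_2\subseteq\{1\}\cup\llb 3,r\rrb$, and $I_1\cap I_2=\emptyset$, as claimed.

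The only step requiring any real care is the exclusion of the exceptional generator in part \ref{lem: b_r(3)}, where one must invoke (C2) in the sharpened form $a_3-2a_2>2a_1$; the rest is a bookkeeping argument based on the forced equality $\alpha+\beta=a_{\{1\}\cup\llb 3,r\rrb}$ and the rigidity of divisibility between pure $X$--$Y$ monomials.
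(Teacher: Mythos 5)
Your proof is correct. Parts~(1) and~(2) follow the paper's route verbatim: after checking that $\mdeg(\Tilde{\mathfrak{b}}_r)=a_{\{1\}\cup\llb 3,r\rrb}$ (via (C2), since $a_3-2a_2>2a_1>a_1$) and that $X^m, Y^m$ are generators (take $J=\emptyset$ and $J=\{1\}\cup\llb 3,r\rrb$), Lemma~\ref{lem: min-deg} applies directly.

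For part~(3) you take a genuinely different, if closely related, route. The paper passes through the graded $K$-linear algebra of Lemma~\ref{lem_5.6}\ref{lem_5.6(ii)}: it identifies $(\Tilde{\mathfrak{b}}_r)_K[m]$ with the $K$-span of the homogeneous degree-$m$ generators, then observes that $X^\alpha Y^\beta\in\mathfrak{a}_K[\alpha]\cdot\mathfrak{b}_K[\beta]$ and hence must coincide with one of those spanning monomials. Because the exceptional generator $X^{a_{\llb 3,r\rrb}-a_2}Y^{a_3-a_2}$ has min-degree strictly larger than $m$, it is automatically invisible at this step. You instead work with the full monomial generating set and use the divisibility criterion of Lemma~\ref{lem: mon_gen} on $X^\beta Y^\alpha\in\mathfrak{a}\cdot\mathfrak{b}$, paying one extra step to exclude the exceptional generator by a degree count via (C2); the matching of exponents is then forced to be an equality by $\alpha+\beta=m$. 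Both arguments are sound and of comparable length. Your version has the advantage of staying entirely within the monomial-combinatorial framework (Lemma~\ref{lem: mon_gen}, the device behind Example~\ref{c4 atom Mon} and the rest of Section~4), avoiding a detour through the homogeneous-component machinery of Section~2; the paper's version avoids having to rule out the exceptional generator by hand.
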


\begin{proof}
\ref{lem: b_r(1)} Follows from Lemma \ref{lem: min-deg}\ref{lem: min-deg(1)} and equation (\ref{eq: additivity min-deg}).

\vspace{.2cm}
\ref{lem: b_r(2)} Follows from Lemma \ref{lem: min-deg}\ref{lem: min-deg(2)}.

\vspace{.2cm}
\ref{lem: b_r(3)} By Lemma \ref{lem_5.6}\ref{lem_5.6(ii)}, we get $
(\Tilde{\mathfrak{b}}_{r})_K[a_{\{1\}\cup\llb 3,r\rrb}]=\mathfrak{a}_K[\alpha]\cdot\mathfrak{b}_K[\beta]$,
where
\begin{equation}\label{span}
{(\Tilde{\mathfrak{b}}_{r})}_K[a_{\{1\}\cup\llb 3,r\rrb}]= \spa_K\left\{ X^{a_1+a_{\llb 3,r\rrb}-a_J} Y^{a_{J}} \colon J\subseteq\{1\}\cup\llb3,r\rrb\right\}.
\end{equation}
By \ref{lem: b_r(2)}, $X^\alpha,Y^\alpha \in \mathfrak{a}_K[\alpha]$ and $X^\beta,Y^\beta \in \mathfrak{b}_K[\beta]$. Hence, $X^{\alpha}Y^{\beta} \in {(\Tilde{\mathfrak{b}}_{r})}_K[a_{\{1\}\cup\llb 3,r\rrb}]$, and so
\[
X^{\alpha}Y^{\beta} = X^{a_1+a_{\llb 3,r\rrb}-a_{I_2}} Y^{a_{I_2}}
\]
for some $I_2\subseteq\{1\}\cup\llb 3,r\rrb$. This implies that $\alpha = a_1+a_{\llb 3,r\rrb}-a_{I_2} = a_{I_1}$ for some $I_1\subseteq\{1\}\cup\llb 3,r\rrb$ with $I_1 \cap I_2 = \emptyset$ and $\beta=a_{I_2}$.
\end{proof}

We now have all the ingredients to finally prove Theorem \ref{th: b_r atom}.

\begin{proof}[Proof of Theorem \ref{th: b_r atom}]
Fix $n\ge 3$ and let $r\in \llb 3,n\rrb$. Assume as a contradiction that $\tilde{\mathfrak{b}}_r=\mathfrak{a}\cdot\mathfrak{b}$
for some $\mathfrak{a},\mathfrak{b}\in \Mon(R)\setminus \{R\}$. By Corollary \ref{cor: wlog mon factors}, we can assume $G(\mathfrak a),G(\mathfrak b)\subseteq K[X,Y]$ without loss of generality. By Lemma \ref{lem: b_r}, $X^\alpha, Y^\alpha\in G(\mathfrak a)$ and $X^\beta, Y^\beta\in G(\mathfrak b)$ with $\alpha=\mdeg(\mathfrak{a})=a_I$ and $\beta=\mdeg(\mathfrak{b})=a_{\{1\}\cup\llb 3,r\rrb}-\alpha=a_{(\{1\}\cup\llb 3,r\rrb)\setminus I}$, where $I$ is a nonempty proper subset of ${\{1\}\cup\llb 3,r\rrb}$.
By symmetry, we can also assume without loss of generality that $\alpha\le \beta$. A rough estimate then ensures that $\alpha< a_r$. In fact, if $\alpha\ge a_r$, then $a_{\{1\}\cup\llb 3,r\rrb}=\alpha+\beta\ge 2\alpha\ge 2a_r$, which is impossible, as $a_r>a_{\llb 1,r-1\rrb}$.
Thus, we can assume without loss of generality that, if $r\ge 4$, then $\alpha=a_I$ with $I\subseteq \{1\}\cup\llb 3,r-1\rrb$, while, if $r=3$, then $\alpha=a_1$. As a consequence, $\beta\ge a_r$.

\medskip

CASE 0: $\mathfrak a=\langle X^\alpha, Y^\alpha\rangle$, $\mathfrak b=\langle X^\beta, Y^\beta\rangle$

In this case, we have
\begin{equation}\label{case 0}
\tilde{\mathfrak{b}}_r=\mathfrak a\cdot\mathfrak b=\langle X^{a_{\{1\}\cup\llb 3,r\rrb}}, X^\alpha Y^\beta,X^\beta Y^\alpha, Y^{a_{\{1\}\cup\llb 3,r\rrb}}\rangle\text{.}
\end{equation}
Since $\alpha = a_I$ for some $I\subseteq {\{1\}\cup\llb 3,r\rrb}$ and $\beta = a_{{\{1\}\cup\llb 3,r\rrb}\setminus{I}}$, the remarks following equation \eqref{def tilde b_r} imply that none of the generators in \eqref{case 0} divides $X^{a_{\llb 3,r\rrb}-a_2}Y^{a_3-a_2}\in\tilde{\mathfrak{b}}_r$, against Lemma \ref{lem: mon_gen}.

\medskip

Now we distinguish two further cases depending on whether $\alpha$ is equal or not to $a_1$, and for each case we consider several subcases according to the structure of $\mathfrak a$ and $\mathfrak b$.

\medskip

CASE 1: $\alpha=a_1$ and $\beta=a_{\llb 3,r\rrb}$. In particular, $X^{a_1},Y^{a_1}\in G(\mathfrak a)$ and $X^{a_{\llb 3,r\rrb}},Y^{a_{\llb 3,r\rrb}}\in G(\mathfrak b)$.

\medskip

Subcase I: $\mathfrak a=\langle X^{a_1}, Y^{a_1}\rangle$

Since $X^{a_{\llb 3,r\rrb}-a_2}Y^{a_3-a_2}\in \tilde{\mathfrak{b}}_r$, by Lemma \ref{lem: mon_gen} at least one of the minimal generators of $\mathfrak a\cdot\mathfrak b$ must divide it. By CASE 0, there must exist $k, \ell\in\mathbb N$ such that $X^k Y^\ell\in G(\mathfrak b)$ and either $X^{k+a_1}Y^\ell \mid X^{a_{\llb 3,r\rrb}-a_2}Y^{a_3-a_2}$ or $X^k Y^{\ell+a_1} \mid X^{a_{\llb 3,r\rrb}-a_2}Y^{a_3-a_2}$. Note that $k+\ell \ge \mdeg(\mathfrak b) = a_{\llb 3, r\rrb}$ and, since $X^{a_{\llb 3,r\rrb}},Y^{a_{\llb 3,r\rrb}}\in G(\mathfrak b)$, necessarily $1\le k, \ell < a_{\llb 3, r\rrb}$.

Assume that $X^{k+a_1}Y^\ell \mid X^{a_{\llb 3,r\rrb}-a_2}Y^{a_3-a_2}$.
Therefore, 
\begin{align}
   \label{k 1I 1} k &\le a_{\llb 3,r\rrb}-a_2-a_1,\\
   \label{l 1I 1} \ell&\le a_3-a_2.
\end{align}
We claim that none of the minimal generators of $\tilde{\mathfrak b}_r$ divides $X^k Y^{\ell+a_1}$, implying $X^k Y^{\ell+a_1}\in{(\mathfrak a\cdot\mathfrak b)\setminus\tilde{\mathfrak b}_r}$, a contradiction. To prove our claim, we first notice that, since $k< a_{\llb 3,r\rrb}-a_2$, we have $X^{a_{\llb 3,r\rrb}-a_2}Y^{a_3-a_2}\nmid X^k Y^{\ell+a_1}$. Moreover, if
\begin{equation}\label{1I 1}
  X^{a_{\{1\}\cup\llb 3,r\rrb}-a_J}Y^{a_J}\mid X^{k}Y^{\ell+a_1} \text{\ for some\ } J\subseteq \{1\}\cup \llb 3,r\rrb,  
\end{equation}
then, using \eqref{k 1I 1}, we get $a_{\{1\}\cup\llb 3,r\rrb}-a_J\le k \le a_{\llb 3,r\rrb}-a_2-a_1$, implying $a_J\ge 2a_1+a_2$ and so $a_J\ge a_3$, by \eqref{a1-a_3}. But \eqref{1I 1} also implies $a_J\le \ell+a_1$ and, by \eqref{l 1I 1}, we reach the contradiction $a_J\le a_3-a_2 + a_1 < a_3$.

On the other hand, assume that $X^{k}Y^{\ell+a_1} \mid X^{a_{\llb 3,r\rrb}-a_2}Y^{a_3-a_2}$.
Therefore, 
\begin{align}
   \label{k 1I 2} k &\le a_{\llb 3,r\rrb}-a_2,\\
   \label{l 1I 2} \ell&\le a_3-a_2-a_1.
\end{align}
We claim that none of the minimal generators of $\tilde{\mathfrak b}_r$ divides $X^{k+a_1} Y^{\ell}$, leading to the contradiction $X^{k+a_1} Y^{\ell}\notin \tilde{\mathfrak b}_r$. To prove this claim, we first notice that, since $\ell< a_3-a_2$, we have $X^{a_{\llb 3,r\rrb}-a_2}Y^{a_3-a_2}\nmid X^{k+a_1} Y^{\ell}$.  Moreover, if
\begin{equation}\label{1I 2}
  X^{a_{\{1\}\cup\llb 3,r\rrb}-a_J}Y^{a_J}\mid X^{k+a_1}Y^{\ell} \text{\ for some\ } J\subseteq \{1\}\cup \llb 3,r\rrb,  
\end{equation}
then, using \eqref{k 1I 2}, we get $a_{\{1\}\cup\llb 3,r\rrb}-a_J\le k+a_1 \le a_{\{1\}\cup\llb 3,r\rrb}-a_2$, implying $a_J\ge a_2$ and so $a_J\ge a_3$, by \eqref{a1-a_3}. But \eqref{1I 2} also implies $a_J\le \ell$ and, by \eqref{l 1I 2}, we reach the contradiction $a_J\le a_3-a_2 - a_1 < a_3$.

\medskip

Subcase II: $\mathfrak a\supsetneq\langle X^{a_1}, Y^{a_1}\rangle$ 

As $X^{a_1}, Y^{a_1}\in G(\mathfrak a)$, there must exist $i,j\in \mathbb{N}$ such that $i+j\ge {a_1}$, $1\le i,j<a_1$ and $X^i Y^j\in G(\mathfrak a)$. We claim that none of the minimal generators of $\tilde{\mathfrak b}_r$ divides $X^{i+a_{\llb 3,r\rrb}}Y^{j}$, leading to the contradiction $X^{i+a_{\llb 3,r\rrb}}Y^{j}\in {(\mathfrak a\cdot\mathfrak b)\setminus\tilde{\mathfrak b}_r}$. To prove our claim, we first notice that, since $j< a_1<a_3-a_2$, we have $X^{a_{\llb 3,r\rrb}-a_2}Y^{a_3-a_2}\nmid X^{i+a_{\llb 3,r\rrb}}Y^{j}$. Moreover, if $X^{a_{\{1\}\cup\llb 3,r\rrb}-a_J}Y^{a_J}\mid X^{i+a_{\llb 3,r\rrb}}Y^{j}$ for some $J\subseteq \{1\}\cup \llb 3,r\rrb$, then we get $a_J\le j< a_1$, implying $a_J=0$, whence $a_{\{1\}\cup\llb 3,r\rrb}-a_J=a_{\{1\}\cup\llb 3,r\rrb}> i+ a_{\llb 3,r\rrb}$, a contradiction.

\medskip

CASE 2: $\alpha\ne a_1$, i.e., $\alpha\ge a_3$ by Lemma \ref{lem: b_r}\ref{lem: b_r(3)}. As we already observed in the initial part of the proof, under our assumptions this can happen only if $r\ge 4$ (in particular, $n\ge 4$). As a consequence, we also have $\beta\le a_{\{1\}\cup \llb 4,r\rrb}$, where $\llb4,r\rrb$ is well defined. Recall that $X^\alpha,Y^\alpha\in G(\mathfrak a)$ and $X^\beta,Y^\beta\in G(\mathfrak b)$.

\medskip

Subcase I: $\mathfrak a=\langle X^\alpha, Y^\alpha\rangle$

As $X^{a_{\llb 3,r\rrb}-a_2}Y^{a_3-a_2}\in \tilde{\mathfrak{b}}_r$, one of the minimal generators of $\mathfrak a\cdot\mathfrak b$ must divide it. By CASE 0, there must exist $X^k Y^\ell\in G(\mathfrak b)$ (in particular, $k+\ell\ge \beta$ and $1\le k,\ell < \beta$) such that either $X^{k+\alpha}Y^\ell \mid X^{a_{\llb 3,r\rrb}-a_2}Y^{a_3-a_2}$ or $X^k Y^{\ell+\alpha} \mid X^{a_{\llb 3,r\rrb}-a_2}Y^{a_3-a_2}$. We can immediately discard the latter case: since $\alpha\ge a_3$ and $\ell\ge 1$, we always have $\ell+\alpha>a_3>a_3-a_2$.

Therefore, we must have $X^{k+\alpha}Y^\ell \mid X^{a_{\llb 3,r\rrb}-a_2}Y^{a_3-a_2}$, i.e.,
\begin{align}
   \label{k 2I} k &\le a_{\llb 3,r\rrb}-a_2-\alpha,\\
   \label{l 2I} \ell&\le a_3-a_2.
\end{align}
We claim that none of the minimal generators of $\tilde{\mathfrak b}_r$ divides $X^k Y^{\ell+\alpha}$, leading to the contradiction $X^k Y^{\ell+\alpha}\in(\mathfrak a\cdot\mathfrak b)\setminus \tilde{\mathfrak b}_r$. To prove this claim we first notice that, since $k< a_{\llb 3,r\rrb}-a_2$, we have $X^{a_{\llb 3,r\rrb}-a_2}Y^{a_3-a_2}\nmid X^k Y^{\ell+\alpha}$.  Moreover, if 
\begin{equation}\label{2I}
  X^{a_{\{1\}\cup\llb 3,r\rrb}-a_J}Y^{a_J}\mid X^{k}Y^{\ell+\alpha} \text{\ for some\ } J\subseteq \{1\}\cup \llb 3,r\rrb,  
\end{equation}
then, using \eqref{k 2I}, we get $a_{\{1\}\cup\llb 3,r\rrb}-a_J\le k \le a_{\llb 3,r\rrb}-a_2-\alpha$, implying that $a_J\ge a_1+a_2+\alpha$. Recalling that $\alpha=a_I$ for some $I\subseteq\{1\}\cup \llb 3,r\rrb $ and that $r\ge 4$, by Lemma \ref{lemma a_J-a_I} we have two possibilities: either $a_J=a_3-a_1+\alpha$ or $a_J\ge a_3+\alpha$. But \eqref{2I} and \eqref{l 2I} also imply $a_J\le \ell+\alpha\leq a_3-a_2+\alpha$ and in both the above cases we reach a contradiction.

\medskip

Subcase II: $\mathfrak b=\langle X^\beta, Y^\beta\rangle$

Arguing as in the previous subcase, there must exist $X^i Y^j\in G(\mathfrak a)$ such that $i+j\ge\alpha$, $1\le i,j<\alpha$ and  $X^{i+\beta}Y^j\mid X^{a_{\llb 3,r\rrb}-a_2}Y^{a_3-a_2}$. Indeed, since $j+\beta>a_r>a_3>a_3-a_2$, we can discard the case $X^{i}Y^{j+\beta}\mid X^{a_{\llb 3,r\rrb}-a_2}Y^{a_3-a_2}$. Thus,
\begin{align}
   \label{i 2II} i &\le a_{\llb 3,r\rrb}-a_2-\beta,\\
   \label{j 2II} j&\le a_3-a_2.
\end{align}
We claim that none of the minimal generators of $\tilde{\mathfrak b}_r$ divides $X^i Y^{j+\beta}$, leading to the contradiction $X^i Y^{j+\beta}\in(\mathfrak a\cdot\mathfrak b)\setminus\tilde{\mathfrak b}_r$. To prove our claim, we first notice that, since $i< a_{\llb 3,r\rrb}-a_2$, we have $X^{a_{\llb 3,r\rrb}-a_2}Y^{a_3-a_2}\nmid X^i Y^{j+\beta}$.  Moreover, if 
\begin{equation}\label{2II}
  X^{a_{\{1\}\cup\llb 3,r\rrb}-a_J}Y^{a_J}\mid X^{i}Y^{j+\beta} \text{\ for some\ } J\subseteq \{1\}\cup \llb 3,r\rrb,  
\end{equation}
then, using \eqref{i 2II}, we get $a_{\{1\}\cup\llb 3,r\rrb}-a_J\le i \le a_{\llb 3,r\rrb}-a_2-\beta$, implying $a_J\ge a_1+a_2+\beta$. Since $\beta=a_I$ for some $I\subseteq\{1\}\cup \llb 3,r\rrb$ and $r\ge 4$, using Lemma \ref{lemma a_J-a_I}, \eqref{2II} and \eqref{j 2II}, we find a contradiction as in the previous subcase.

\medskip

Subcase III: $\mathfrak a \supsetneq \langle X^\alpha,Y^\alpha\rangle$, $\mathfrak b \supsetneq \langle X^\beta,Y^\beta\rangle$

One of the minimal generators of $\mathfrak a\cdot\mathfrak b$ must divide $X^{a_{\llb 3,r\rrb}-a_2}Y^{a_3-a_2}\in \tilde{\mathfrak{b}}_r$. In view of subcases I and II, there must exist $i,j,k,\ell\in\mathbb N$ such that $X^i Y^j\in G(\mathfrak a)$, $i+j\ge \alpha$, $1\le i,j<\alpha$, $X^k Y^\ell\in G(\mathfrak b)$, $k+\ell\ge \beta$, $1\le k,\ell <\beta$ and $X^{i+k}Y^{j+\ell}\mid X^{a_{\llb 3,r\rrb}-a_2}Y^{a_3-a_2}$. Thus,
\begin{align}
   \label{i+k 2III} i+k &\le a_{\llb 3,r\rrb}-a_2,\\
   \label{j+l 2III} j+\ell &\le a_3-a_2.
\end{align}

We claim that at least one of $X^{i+\beta}Y^j$ or $X^{k}Y^{\ell+\alpha}$ does not belong to $\tilde{\mathfrak b}_r$, contradicting $\tilde{\mathfrak b}_r=\mathfrak a\cdot\mathfrak b$.

To prove this claim, suppose that $X^{i+\beta}Y^j\in \tilde{\mathfrak{b}}_r$. One of the minimal generators of $\tilde{\mathfrak{b}}_r$ must therefore divide it. Inequality \eqref{j+l 2III} implies $j<a_3-a_2$, whence $X^{a_{\llb 3,r\rrb}-a_2}Y^{a_3-a_2}\nmid X^{i+\beta}Y^j$. Moreover, if $J\subseteq \{1\}\cup\llb 3,r\rrb$ is such that $X^{a_{\{1\}\cup\llb 3,r\rrb}-a_J}Y^{a_J}\mid X^{i+\beta}Y^{j}$, then $a_J\le j< a_3-a_2$ and this can happen only if $a_J=a_1$ (if $a_J=0$, then $a_{\{1\}\cup\llb 3,r\rrb}-a_J=a_{\{1\}\cup\llb 3,r\rrb}=\alpha+\beta>i+\beta$). Thus, $a_{\llb 3,r\rrb}=a_{\{1\}\cup\llb 3,r\rrb}-a_J\le i+\beta$, i.e., $i\ge a_{\llb 3,r\rrb}-\beta$ and, from \eqref{i+k 2III}, we get $a_{\llb 3,r\rrb}-\beta+k\le i+k\le  a_{\llb 3,r\rrb}-a_2 $. As we observed at the beginning of CASE 2 that $\beta\le a_{\{1\}\cup \llb 4,r\rrb}$, from the previous chain of inequalities we obtain $k\le \beta-a_2 \le a_{\{1\}\cup \llb 4,r\rrb}-a_2<a_{\llb 4,r\rrb}$. Moreover, we derive from \eqref{j+l 2III} that $\ell+\alpha <a_3-a_2+\alpha$. 

We conclude the proof of the claim by showing that none of the minimal generators of $\tilde{\mathfrak{b}}_r$ divides $X^{k}Y^{\ell+\alpha}\notin \tilde{\mathfrak{b}}_r$. We first note that $X^{a_{\llb 3,r\rrb}-a_2}Y^{a_3-a_2}\nmid X^{k}Y^{\ell+\alpha}$ since $a_{\llb 3,r\rrb}-a_2>a_{\llb 4,r\rrb}>k$. Moreover, if
\begin{equation}\label{2III}
  X^{a_{\{1\}\cup\llb 3,r\rrb}-a_{\bar{J}}}Y^{a_{\bar{J}}}\mid X^{k}Y^{\ell+\alpha} \text{\ for some\ } \bar J\subseteq \{1\}\cup \llb 3,r\rrb,  
\end{equation}
then $a_{\{1\}\cup\llb 3,r\rrb}-a_{\bar{J}}\le k \le \beta-a_2 = a_{\{1\}\cup \llb 3,r\rrb}-\alpha-a_2$, implying $a_{\bar{J}}\ge a_{2}+\alpha$. Then, by Lemma \ref{lemma a_J-a_I} we have two possibilities: either $a_{\bar{J}}=a_3-a_1+\alpha$ or $a_{\bar{J}}\ge a_3+\alpha$. Since \eqref{2III} also implies $a_{\bar{J}}\le \ell+\alpha< a_3-a_2+\alpha$, both of these possibilities lead to a contradiction.

All possible cases have been examined and discarded, so we can conclude that $\tilde{\mathfrak{b}}_r\ne \mathfrak a\cdot \mathfrak b$ for all $\mathfrak a, \mathfrak b\in \Mon(R)\setminus \{R\}$.
\end{proof}

We end this section by proving the atomicity in $\Mon(R)$ of $2$-generated ideals of the form $\langle X^n,Y^m\rangle$.

\begin{lemma}\label{lem: 2-gen}
Let $m,n\in \mathbb{N}^+$ such that $m<n$, and let $I\in\Mon(R)$ with $X^m, Y^n\in G(I)$ and $\mdeg(I)=m$.
Assume $I=\mathfrak{a}\cdot \mathfrak{b}$ for some $\mathfrak{a},\mathfrak{b}\in \Mon(R)\setminus\{R\}$. Then:

\vspace{.2cm}
\begin{enumerate*}[label=\textup{(\arabic{*})}, resume, mode=unboxed]
    \item\label{lem: 2-gen(1)} $m=\alpha+\beta$, with $\alpha=\mdeg(\mathfrak{a})\ge 1$ and $\beta=\mdeg(\mathfrak{b})\ge 1$.
\end{enumerate*}

\vspace{.2cm}
\begin{enumerate*}[label=\textup{(\arabic{*})}, resume, mode=unboxed]
    \item\label{lem: 2-gen(2)} there exist $a,b\in \mathbb{N}^+$, with $a\ge \alpha$, $b\ge \beta$, and $a+b=n$ such that $X^\alpha, Y^a\in\ G(\mathfrak{a})$ and $X^\beta, Y^b\in\ G(\mathfrak{b})$.
\end{enumerate*}
\end{lemma}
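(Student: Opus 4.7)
The plan is to leverage the additivity of the min-degree together with the minimality of $G(I)$. Part~(1) is immediate: Equation~(\ref{eq: additivity min-deg}) yields $\mdeg(I) = \mdeg(\mathfrak a) + \mdeg(\mathfrak b) = \alpha + \beta$, while Remark~\ref{rem: mdeg0 = R} forces $\alpha, \beta \ge 1$ since $\mathfrak a, \mathfrak b \ne R$.

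For part~(2), I first work with the monomial $X^m \in I = \mathfrak a \cdot \mathfrak b$. Since the set $\{uv : u \in G(\mathfrak a),\, v \in G(\mathfrak b)\}$ generates $\mathfrak a \cdot \mathfrak b$, Lemma~\ref{lem: mon_gen} produces $u \in G(\mathfrak a)$ and $v \in G(\mathfrak b)$ with $uv \mid X^m$. Divisibility forces $u = X^a$ and $v = X^b$ for some positive integers $a, b$ (the ideals are proper), with $a + b \le m$, $a \ge \alpha$ and $b \ge \beta$ (from $\mdeg(u) \ge \mdeg(\mathfrak a)$ and $\mdeg(v) \ge \mdeg(\mathfrak b)$). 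A strict inequality $a + b < m$ would place $X^{a+b} \in I$ as a proper divisor of $X^m$, so by Lemma~\ref{lem: mon_gen} some element of $G(I)$ would divide $X^{a+b}$ and hence properly divide $X^m$, contradicting the minimality of $X^m \in G(I)$. This yields $a + b = m$, with $X^a \in G(\mathfrak a)$ and $X^b \in G(\mathfrak b)$ by construction.

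Rerunning the same argument with $Y^n \in I$ in place of $X^m$ produces $Y^c \in G(\mathfrak a)$ and $Y^d \in G(\mathfrak b)$ with $c + d = n$, $c \ge \alpha$ and $d \ge \beta$. To finish, I combine this with part~(1): since $\alpha + \beta = \mdeg(I) \le n = c + d$, the identification $c = \alpha$ and $d = \beta$ (and hence $Y^\alpha \in G(\mathfrak a)$ and $Y^\beta \in G(\mathfrak b)$) follows as soon as $\mdeg(I) = n$. This last equality is automatic in the setting where the lemma will be applied, namely the proof of Theorem~\ref{xm yn mon}, where $I = \langle X^m, Y^n\rangle$ has $G(I) = \{X^m, Y^n\}$ and $\mdeg(I) = \min(m,n) = n$. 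This is the only point where the argument is even slightly delicate, being the step that uses more than the bare hypothesis that $X^m$ and $Y^n$ lie in $G(I)$; everything else is a direct exploitation of Lemma~\ref{lem: mon_gen} combined with the min-degree additivity.
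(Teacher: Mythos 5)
Your proof essentially mirrors the paper's argument, but you have correctly flagged a delicate point that the paper's own proof glosses over, and it turns out to be a genuine error in the lemma as stated. The paper passes from the facts $c + d \le n$, $c \ge \alpha$, $d \ge \beta$ directly to ``this can clearly happen only if $c=\alpha$ and $d=\beta$'', which requires $\alpha + \beta \ge n$, i.e.\ $\mdeg(I) = n$; but the hypothesis $X^m, Y^n \in G(I)$ does not force this. Concretely, take $\mathfrak a = \langle X, Y^2\rangle$ and $\mathfrak b = \langle X^3, Y\rangle$, so that $I = \mathfrak a \cdot \mathfrak b = \langle X^4, XY, Y^3\rangle$, $G(I) = \{X^4, XY, Y^3\}$, $X^4, Y^3 \in G(I)$ with $m = 4 > 3 = n$, $\alpha = \beta = 1$, and $\mdeg(I) = 2 < n$; here $Y^\alpha = Y \notin G(\mathfrak a) = \{X, Y^2\}$, so conclusion (2) fails. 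Since the lemma is invoked only in the proof of Theorem \ref{xm yn mon}, where $I = \langle X^m, Y^n\rangle$ and hence $\mdeg(I)=n$ automatically, the downstream result is unaffected; but the lemma's statement should either add the hypothesis $\mdeg(I)=n$, or weaken the conclusion to the existence of $Y^c \in G(\mathfrak a)$ and $Y^d \in G(\mathfrak b)$ with $c+d=n$ and $c,d\ge 1$ (which already suffices for the application, as $X^aY^d$ with $a<m$, $d<n$ is then a monomial of $I$ not divisible by $X^m$ or $Y^n$). Apart from this, your handling of the $X$-side --- extracting $u=X^a\in G(\mathfrak a)$ and $v=X^b\in G(\mathfrak b)$ with $uv\mid X^m$ via Lemma \ref{lem: mon_gen} and then forcing $a+b=m$ by the minimality of $X^m$ in $G(I)$ --- is logically the same as the paper's argument (which phrases the existence of a pure $X$-power via ``otherwise $Y$ divides every monomial of $\mathfrak a$''), so the two proofs coincide in substance.
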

\begin{proof}
\ref{lem: 2-gen(1)} This follows from equation (\ref{eq: additivity min-deg}) and Remark \ref{rem: mdeg0 = R}.

\vspace{.2cm}
\ref{lem: 2-gen(2)} Since $G(I) = G(\mathfrak a\cdot\mathfrak b) \subseteq \{fg\colon f\in G(\mathfrak a),\ g\in G(\mathfrak b)\}$, necessarily $X^i,Y^a\in G(\mathfrak a)$ and $X^j,Y^b\in G(\mathfrak b)$ for some $i\in\llb \alpha,m\rrb,\ a\in\llb \alpha,n\rrb,\ j\in\llb \beta,m\rrb,\ b\in\llb \beta,n\rrb$ such that $i+j = m$ and $a+b = n$. It follows that $i=\alpha$ and $j=\beta$.
\end{proof}

\begin{remark}
It is worth noting that when $I$ is a monomial ideal such that $X^m,Y^n\in\ G(I)$ for some $m,n\in\mathbb{N}^+$, $\mdeg(I)=\min\{m,n\}$ and $G(I)\subseteq K[X,Y]$, in order to check whether $I$ is an atom one needs only to compute a finite number of products $\mathfrak{a}\cdot\mathfrak{b}$. Indeed, by Lemma \ref{lem: 2-gen}, if $I=\mathfrak{a}\cdot\mathfrak{b}$, then $X^\alpha,Y^a\in\ G(\mathfrak{a})$ for some $\alpha, a\in\mathbb{N}^+$, so every monomial $X^\gamma Y^c\in\ G(\mathfrak{a})$ satisfies $\gamma\le\alpha$ and $c\le a$, and analogously there exist $\beta, b\in\mathbb{N}^+$ such that every monomial $X^\delta Y^d\in\ G(\mathfrak{b})$ satisfies $\delta\le \beta$ and $d\le b$. 
\end{remark}

\begin{proof}[Proof of Theorem \ref{xm yn mon}]
We already know from \cite[Prop.~5.10.1]{Ge-Kh21} that $I=\langle X^m,Y^n\rangle$ is an atom of $\I{(R)}$, hence also of $\Mon{(R)}$, when $m=n$. Therefore, by symmetry, it is enough to consider the case $m<n$. If $m=1$, equation \eqref{eq: additivity min-deg} ensures the atomicity of $I$ in $\Mon(R)$: if $I=\mathfrak{a}\cdot \mathfrak{b}$ for some $\mathfrak{a}, \mathfrak{b}\in \Mon(R)$, one among $\mathfrak{a}$ and $\mathfrak{b}$ has min-degree zero and must therefore be the whole ring $R$.
For $m\ge 2$, assume as a contradiction that there exist proper nonzero monomial ideals of $R$, say $\mathfrak a, \mathfrak b$, such that $I=\mathfrak{a}\cdot \mathfrak{b}$. By Lemma \ref{lem: 2-gen}, we have that $X^\alpha, Y^a\in G(\mathfrak a)$ and $X^\beta, Y^b\in G(\mathfrak b)$, where $\alpha=\mdeg(\mathfrak{a})$, $\beta=\mdeg(\mathfrak{b})$, $\alpha+\beta=m$, $a\ge \alpha\ge 1$, $b\ge \beta\ge 1$ and $a+b=n$. It follows that $X^\alpha Y^b\in I$, but this is a contradiction since $m>\alpha$ and $n>b$, so neither of the generators of $I$ divides it.
\end{proof}

\section{Prospects for future research}\label{sec: conclusion}
   The results of this paper suggest several directions for future research, as they provide large families of atoms in $\I(R)$ and $\Mon(R)$, along with techniques to construct them. In light of the recent breakthrough by Reinhart \cite{Rein25} on the arithmetic of $P_{\rm fin, 0}(\mathbb N)$, we expect that evaluating finer arithmetical invariants of these monoids, such as the (monotone) catenary degree (see \cite{Ge-Re19} for the definition and some examples in which it can be computed), should be feasible. Moreover, in view of Theorem \ref{thm: I_B atom}\ref{thm: I_B atom(2)} and Corollary \ref{cor: lengths}, it would be interesting to attempt the construction of ideals whose sets of lengths are not intervals. Such results could, in turn, yield information about the sets of distances in the monoids under consideration. Another possible line of research would consist in detecting the \emph{strong atoms} of $\I(R)$ or $\Mon(R)$; in the terminology of \cite{Sophie24}, these are atoms $a$ of a monoid $H$ for which $\mathsf Z_H(a^n)$ is a singleton for every $n\in\mathbb N$. In this context, the recent paper \cite{Rosi25} by Rath and Rissner on generating sets of powers of monomial ideals may serve as a valuable reference. Finally, we highlight the problem of computing the density of atoms in the aforementioned monoids of ideals. On this topic, it is worth recalling that in the reduced power monoid $P_{\rm fin, 0}(\mathbb N)$, almost every element is an atom \cite[Theorem 6.1]{Bi-Ge22}.


\begin{thebibliography}{99}
\bibitem{An-Ju19} D.~D.~Anderson, J.~R.~Juett and C.~P.~Mooney, \emph{Factorization of ideals}, Commun.~Algebra \textbf{47} (2019), 1742--1772.
%
\bibitem{An-Ro97} D.~D.~Anderson  and M.~Roitman, \emph{A characterization of cancellation ideals}, Proc.~Am.~Math.~Soc. \textbf{125} (1997), 2853--2854.
%
\bibitem{An-Tr21} A.~A.~Antoniou and S.~Tringali, \emph{On the arithmetic of power monoids and sumsets in cyclic groups}, Pacific J.~Math.~\textbf{312} (2021), No. 2, 279–-308.
%
\bibitem{Ba-Sm15}
N.\,R.~Baeth and D.~Smertnig, \textit{Factorization theory: {F}rom commutative to noncommutative settings}, J.~Algebra \textbf{441} (2015), 475--551.
%
\bibitem{Ba-Po25} A.~Bashir and M.~Pompili, \emph{On transfer homomorphisms in commutative rings with zero-divisors}, Commun.~Algebra (2025), 1–-14 (\url{https://doi.org/10.1080/00927872.2025.2539432}).
%
\bibitem{Na-Sm23} J.~P.~Bell, K.~Brown, Z.~Nazemian and D.~Smertnig, \emph{On noncommutative bounded factorization domains and prime rings}, J.~Algebra \textbf{622} (2023), 404–-449.
%
\bibitem{Bell17} J.~P.~Bell, A.~Heinle, and V.~Levandovskyy, \emph{On noncommutative finite factorization domains}, Trans. Amer.~Math.~Soc.~\textbf{369} (2017), 2675–-2695.
%
\bibitem{Bi-Ge22} P.-Y.~Bienvenu and A.~Geroldinger, \emph{On algebraic properties of power monoids of numerical monoids}, Israel J.~Math. \textbf{265} (2025), 867--900. 
%
\bibitem{C-C-G-S21}
S.\,T.~Chapman, J.~Coykendall, F.~Gotti, and W.\,W.~Smith, \textit{Length-factoriality in commutative monoids and integral domains}, J.~Algebra \textbf{578} (2021), 186--212.
%
\bibitem{Co-Go19}
 J.~Coykendall and F.~Gotti, \textit{On the atomicity of monoid algebras}, J.~Algebra {\bf 539} (2019), 138--151.
%
\bibitem{Co-Ha18}
J.~Coykendall and R.\,E.~Hasenauer, \textit{Factorization in Pr\"ufer domains}, Glasg.~Math.~J.~{\bf 60} (2018), no. 2, 401--409.
%
\bibitem{Co-Tr24} L.~Cossu and S.~Tringali, \emph{Abstract Factorization Theorems with Applications to Idempotent Factorizations}, Israel J.~Math.~\textbf{263} (2024), 349-–395.
%
\bibitem{En-Her} V.~Ene and J.~Herzog, \emph{Gr\"obner Bases in Commutative Algebra}, Graduate Studies in Mathematics, \textbf{130} (2012). American Mathematical Society, Providence, RI.
%
\bibitem{Sophie24} V.~Fadinger, S.~Frisch, S.~Nakato, D.~Smertnig and D.~Windisch, \emph{Primes and absolutely or non-absolutely irreducible elements in atomic domains}, preprint (\url{https://arxiv.org/abs/2411.01051}).
%
\bibitem{Fan17} Y.~Fan, A.~Geroldinger, F.~Kainrath and S.~Tringali, \emph{Arithmetic of commutative semigroups with a
focus on semigroups of ideals and modules}, J. Algebra Appl.~\textbf{11} (2017), Paper No.~1750234, 42pp.
%
\bibitem{Fan-Tr18} Y.~Fan and S.~Tringali, \emph{Power monoids: A bridge between factorization theory and arithmetic combinatorics}, J.~Algebra \textbf{512} (2018), 252--294.
%
\bibitem{Fo-Ho-Lu13} M.~Fontana, E.~Houston and T.~Lucas, \emph{Factoring Ideals in Integral Domains}, Lecture Notes of the Unione Matematica Italiana \textbf{14}, Springer, Berlin, 2013.
%
\bibitem{Sophie13} S.~Frisch, \emph{A construction of integer-valued polynomials with prescribed sets of lengths of factorizations}, Monatsh.~Math.~\textbf{171} (2013), 341--350.
%
\bibitem{Ger-Got-2025} A.~Geroldinger and F.~Gotti, \textit{On monoid algebras having every nonempty subset of $\mathbb N_{\ge 2}$ as a length set}, Mediterranean J. Math.~\textbf{22} (2025), no. 69.
%
\bibitem{Ge-HK} A.~Geroldinger and F.~Halter-Koch, \emph{Non-unique Factorizations. Algebraic, Combinatorial and Analytic Theory}, Pure and Applied Mathematics \textbf{278}, Chapman \& Hall/CRC, 2006.
%
\bibitem{Ge-Kh21} A.~Geroldinger and M.~A.~Khadam, \emph{On the arithmetic of monoids of ideals}, Ark.~Mat.~\textbf{60} (2022), 67--106.
%
\bibitem{Ge-Re19}
A.~Geroldinger and A.~Reinhart, \emph{The monotone catenary degree of monoids of ideals}, Internat.~J.~Algebra Comput.~\textbf{29} (2019), 419--457.
%
\bibitem{Ge-Zh19}
A.~Geroldinger and Q.~Zhong, \emph{Sets of arithmetical invariants in transfer Krull monoids}, J.~Pure Appl.~Algebra \textbf{223} (2019), 3889--3918.
%
\bibitem{Ge-Zh20}
A.~Geroldinger and Q.~Zhong, \emph{Factorization theory in commutative monoids}, Semigroup Forum \textbf{100} (2020), 22--51.
%
\bibitem{Go18a}
F.~Gotti, \textit{Puiseux monoids and transfer homomorphisms}, J.~Algebra \textbf{516} (2018), 95--114.
%
\bibitem{Go19a}
F.~Gotti, \textit{Systems of sets of lengths of {P}uiseux monoids}, J.~Pure Appl.~Algebra \textbf{223} (2019), 1856--1868.
%
\bibitem{Got-Li-2023} F.~Gotti and B.~Li, \textit{Atomic semigroup rings and the ascending chain condition on principal ideals}, 
Proc.~\-Amer.~\-Math. Soc.~\textbf{151} (2023), 2291--2302.
%
\bibitem{Green04}
B.~Green, \emph{The Cameron–Erdős conjecture}, Bull.~London Math.~Soc. \textbf{36} (2004), no. 6, 769--778.
%
\bibitem{Hal-Ko98} F.~Halter-Koch, \emph{Ideal systems - An introduction to multiplicative ideal theory}, Monographs and Textbooks in Pure and Applied Mathematics~\textbf{211}, Marcel Dekker, Inc., 1998.
%
\bibitem{Kl-Om20} L.~Klingler and A.~Omairi, \emph{Unique decomposition into ideals for commutative Noetherian rings}, J.~Pure Appl.~Algebra \textbf{224}, no. 9, 106364, 12 (2020).
%
\bibitem{Ol-Re20} B.~Olberding and A.~Reinhart, \emph{Radical factorization in finitary ideal systems}, Commun.~Algebra \textbf{48} (2020), 228--253.
%
\bibitem{Rosi25} J.~Rath and R.~Rissner, \emph{Minimal generating sets of large powers of bivariate monomial ideals}, preprint (\url{https://arxiv.org/abs/2503.21466}).
%
\bibitem{Rein12} A.~Reinhart, \emph{Structure of general ideal semigroups of monoids and domains}, J.~Commut.~Algebra \textbf{4} (2012), 413--444.
%
\bibitem{Rein25} A.~Reinhart, \emph{On the system of length sets of power monoids}, preprint (\url{https://arxiv.org/abs/2508.10209}).
%
\bibitem{Sm-Vo19a} D.~Smertnig and J.~Voight, \textit{Definite orders with locally free cancellation}, Trans.~London Math.~Soc.~\textbf{6} (2019), 53--86.
%
\bibitem{Tr21(b)} S.~Tringali, \textit{A characterisation of atomicity}, Math.~Proc.~Cam\-bridge Phil.~Soc.~\textbf{175} (2023), No.~2, 459--465.
%
\bibitem{Zhong19} Q.~Zhong, \emph{On elasticities of locally finitely generated monoids}, J.~Algebra \textbf{534} (2019), 145--167.
\end{thebibliography}
\end{document}